\documentclass[11 pt, psamsfonts]{amsart}
\usepackage{fullpage} 
\usepackage{parskip} 
\usepackage{tikz} 
\usepackage{amsmath,amssymb,amsfonts,amsthm}
\usepackage{hyperref}
\usepackage{longtable}
\usepackage{indentfirst}
\usepackage{xcolor}
\theoremstyle{definition}
\usepackage{array} 
\setlength{\extrarowheight}{1.5pt}



\usepackage{times}
\usepackage[T1]{fontenc}


\usepackage{graphicx}
\newtheorem{thm}{Theorem}[section]
\theoremstyle{definition}

\DeclareGraphicsExtensions{.png}


\DeclareMathOperator{\Rep}{Rep}
\DeclareMathOperator{\Fib}{Fib}
\DeclareMathOperator{\Sem}{Sem}

\DeclareMathOperator{\Id}{Id}
\DeclareMathOperator{\SU}{SU}

\DeclareMathOperator{\SO}{SO}
\DeclareMathOperator{\SL}{SL}

\DeclareMathOperator{\PSU}{PSU}

\DeclareMathOperator{\sVec}{sVec}
\DeclareMathOperator{\Gal}{Gal}
\DeclareMathOperator{\Hom}{Hom}

\DeclareMathOperator{\1}{\textbf{1}}

\DeclareMathOperator{\FPdim}{FPdim}
\newcommand{\er}[1]{\color{blue}#1\color{black}}
\newcommand{\jp}[1]{\color{cyan}#1 -jp\color{black}}
\newcommand{\qz}[1]{\color{magenta}#1 -qz\color{black}}

\newcommand{\paren}[1]{\left(#1\right)}
\newcommand{\CC}{\mathcal{C}}
\newcommand{\DD}{\mathcal{D}}

\newcommand{\FF}{\mathcal{F}}
\newcommand{\BB}{\mathcal{B}}
\newcommand{\T}{\mathcal{T}}

\newcommand{\bbQ}{\mathbb{Q}}
\newcommand{\hlam}{\hat{\lambda}}
\newcommand{\hT}{\hat{T}}
\newcommand{\hS}{\hat{S}}
\newcommand{\hN}{\hat{N}}
\newcommand{\tS}{\tilde{S}}
\newcommand{\ot}{\otimes}

\newcommand{\one}{\mathbf{1}}

\newcommand{\Z}{\mathbb{Z}}

\newtheorem{theorem}[thm]{Theorem}
\newtheorem{conj}{Conjecture}[section]
\newtheorem{lem}{Lemma}[section]
\newtheorem{Prop}{Proposition}[section]
\newtheorem{example}{Example}[section]

\newtheorem{rmk}{Remark}
\newtheorem{Def}{Definition}[section]
\newtheorem{Cor}{Corollary}[section]
\newtheorem{Question}{Question}

\title{Classification of super-modular categories}
\author{Paul Bruillard$^1$,  Julia Plavnik$^2$, Eric C. Rowell$^3$, Qing Zhang$^3$}
\date{\today}

\address{$^1$Expedia Group \\
Bellevue, WA 98004\\
U.S.A.}
\email{Paul.Bruillard@pnnl.gov}
\address{$^2$Department of Mathematics\\
    Indiana University\\
    Bloomington, IN 47405\\
    U.S.A.}
\email{jplavnik@iu.edu}
\address{$^3$Department of Mathematics\\
    Texas A\&M University\\
    College Station, TX 77843-3368\\
    U.S.A.}
\email{rowell@math.tamu.edu, zhangqing@math.tamu.edu}

\thanks{ECR and QZ were partially supported by US NSF grant MPS-1664359, and a Presidential Impact Fellowship of Texas A\&M. JP was partially supported by NSF grants DMS-1802503 and DMS-1917319. ECR gratefully acknowledges the support of the Simons Foundation through a Simons Fellowship.
The authors thank M. Papanikolas  for helpful discussions.}

\begin{document}
\begin{abstract}
We develop categorical and number theoretical tools for the classification of super-modular categories.  We apply these tools to obtain a partial classification of super-modular categories of rank $8$.  In particular we find three distinct families of prime categories in rank $8$ in contrast to the lower rank cases for which there is only one such family.
\end{abstract}
\maketitle

\section{Introduction}

The classification of braided fusion categories (BFCs) stands as a formidable, yet enticing problem.  There are many approaches to this problem, with varying levels of preciseness and corresponding degrees of difficulty--as examples, one might try to classify by categorical dimension \cite{MR2098028, naidu2011finiteness, bruillard2016classification, bruillard2017categorical, bruillard2017categorical, bruillard2013classification, yu}, by Witt class \cite{DMNO,davydov2013structure}, by dimension of a generating object \cite{AMP,EM1, EM2}, or by rank \cite{RSW,ostrik2002fusion}.  Each of these approaches have different motivations and have seen some measure of success.  For example classifying by categorical dimension is related to the problem of classifying groups  by their orders, while classifying by the dimension of a generating object is related to the classification of finite index, finite depth subfactors.  Classification by rank can be motivated physically: for condensed matter systems (e.g. topological phases of matter) modeled by braided fusion categories, the rank of the category corresponds to the number of distinguishable indecomposable particle species \cite{Nayaketal}.  In this article we will be interested in classification by (low) rank of \emph{unitary} BFCs, as motivated by this physical interpretation.

Interestingly, the classification of low rank fusion categories has not progressed very far: it is an open question as to whether there are finitely many fusion categories of each rank, whereas with the braiding assumption rank-finiteness is known \cite{BNRW1,jones2019rank}.  The classification of pivotal fusion categories is complete up to rank 3, while the braiding assumption allows one to go a bit further, for example, there is a complete classification up to rank 5 of pre-modular fusion categories \cite{MR3548123}, \cite{BOM}. One reason is as follows, which also serves to motivate this paper more specifically:
It is well-known \cite{DGNO} that if $\BB$ is a braided fusion category and $\Rep(G)\cong \BB^\prime_{\T}\subset \BB$ is the maximal Tannakian subcategory of the M\"uger center $\BB^\prime$ of $\BB$, then the $G$-de-equivariantization $\BB_G$ of $\BB$ is either non-degenenerate (has trivial M\"uger center) or slightly degenerate (has M\"uger center equivalent to $\sVec$).  For unitary BFCs this produces either a unitary modular tensor category (in the non-degenerate case) or a super-modular category (in the slightly degenerate case).  Thus, if one is interested in unitary braided fusion categories ``modulo finite group representations" one is led to study modular or super-modular categories.

Techniques for classifying modular categories are well-established (for example, see \cite{RSW,BNRW2}), and the classification up to rank $6$ is nearly complete \cite{Creamer, Green}.  Those methods cannot always be applied to general braided fusion categories.  For example, a key approach in \cite{BNRW2} is to use the representation theory of the modular group $\SL(2,\Z)$ to put constraints on the (modular) $S$- and (twist) $T$-matrices, whereas a super-modular category doesn't not provide such representations, as the $S$ matrix has determinant $0$.  On the other hand, there is an important conjecture known as the \emph{minimal modular extension} (MME) conjecture \cite{DMNO,16fold} that predicts that any super-modular category $\BB$ can be embedded in a modular category $\CC$ with $\dim(\CC)=2\dim(\BB)$.  Necessarily such a $\CC$ will be a \emph{spin modular} category, i.e.\ a modular category with a distinguished fermion $f$, and $\BB=\langle f\rangle^\prime$ is the M\"uger centralizer of the category generated by $f$.  

Some techniques for classifying super-modular categories have been developed recently \cite{16fold,bruillard2017classification}, which lead to a complete classification up to rank $6$.  It turns out that there are really very few such categories: modulo trivial Deligne product constructions and up to fusion rules there are only two examples with rank $\leq 6$, and both of them belong to the a family of super-modular categories arising from quantum groups.  A particularly useful technique is to formally ``condense the fermion" to obtain a fermionic quotient, which has naive fusion rules.  These can be studied using the concept of a $\sVec$-enriched fusion category \cite{Usher, MP}, but we will not pursue that here. In this article we make some partial progress towards the classification of rank $8$, using a stratification by Galois group and some new techniques.  We find that there are many non-trivial examples, in contrast to lower ranks, and we were unable to give a definitively complete classification.  

For the following the (standard) notation is explained in the appendix.
\begin{theorem}
\begin{enumerate}
    \item 

The following are constructions of prime rank $8$ super-modular categories as centralizers of a distinguished fermion in spin modular categories:
\begin{enumerate}
    \item $\PSU(2)_{14}=\langle f\rangle^\prime\subset SU(2)_{14}$ where $f$ is the unique fermion corresponding to highest weight $7\varpi$.
    \item $[\PSU(2)_6\boxtimes\PSU(2)_6]_{\Z_2}=\langle \overline{(f,\one)}\rangle^\prime\subset ([\SU(2)_6\boxtimes\SU(2)_6]_{\Z_2})_0$ where the $\Z_2$-de-equivariant{-}ization in both cases is with respect to the boson $(f,f)$ where $f$ has highest weight $3\varpi$, and $\overline{(f,\one)}$ is the image of $(f,\one)$ under de-equivariantization.
    \item $\langle f\rangle^\prime\subset SO(12)_2$, where $f$ is either of the fermions labelled by $2\varpi_{5}$ or $2\varpi_6$.
\end{enumerate}
\item Moreover, if we assume that the naive fusion rules $\{\hN_{ij}^k=N_{ij}^k+N_{ij}^{fk}\}_{i,j,k}$ 
and the simple objects' dimensions $d_i$ are each bounded by $14$, then any prime super-modular category of rank $8$ has the same fusion rules as one of the above.\end{enumerate}
\end{theorem}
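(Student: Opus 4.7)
The plan is to treat the two parts of the statement by quite different methods: part (1) is a verification exercise on three explicit categorical constructions, while part (2) is a constrained enumeration problem using the fermionic-quotient and Galois-stratification techniques alluded to in the introduction.

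For part (1), for each of the three candidates, I would: (i) identify the distinguished fermion $f$ in the ambient spin modular category using the known formulas for twists and dimensions in quantum-group categories (for $\SU(2)_{14}$ the highest-weight $7\varpi$ object is the unique simple with $\theta=-1$ and $d=1$; for the $\Z_2$-de-equivariantization of $[\SU(2)_6\boxtimes\SU(2)_6]_{\Z_2}$ one tracks the orbit of $(f,\one)$; for $\SO(12)_2$ one checks directly that the two spinors $2\varpi_5, 2\varpi_6$ are fermionic); (ii) compute $\langle f\rangle^\prime$ and confirm its rank is $8$, using the balancing equation and the structure of the centralizer in each quantum-group case; (iii) verify that the M\"uger center of $\langle f\rangle^\prime$ is precisely $\langle f\rangle\cong\sVec$, which establishes super-modularity; (iv) prove primeness by ruling out a nontrivial Deligne factorization as braided categories, typically by noting that any such factor would produce an extra symmetric subcategory in $\BB^\prime$ beyond $\langle f\rangle$. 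Steps (i)--(iii) are routine once the data of the ambient category are written down; step (iv) is a short argument on the fusion ring.

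For part (2), I would first pass to the fermionic quotient: with $8$ simple objects of $\BB$ paired by $f$ into $4$ orbits $\{x,fx\}$, one obtains a rank-$4$ naive fusion datum with structure constants $\hN_{ij}^k = N_{ij}^k + N_{ij}^{fk}$. The hypotheses $\hN_{ij}^k\le 14$ and $d_i\le 14$ together with the constraints $\dim(\BB)=2\sum d_x^2$ (sum over orbit representatives), associativity, commutativity, and a duality involution cut the admissible naive fusion rings down to a finite combinatorial list which can be enumerated. Next I would bring in the Galois stratification: the Galois group of the cyclotomic field generated by the (normalized) $\hS$-entries acts on the orbit set, and its compatibility with the fermion pairing and with the Verlinde-type formula for $\hN$ eliminates many candidates and pins down the Frobenius--Perron dimensions. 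For the few fusion rings that survive, I match each against the three constructions of part (1).

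The main obstacle will be the intermediate combinatorial explosion and, more critically, the ruling out of naive fusion data that pass every numerical and Galois test but fail to categorify. These non-categorifiable candidates typically require a bespoke argument: one can try to reconstruct a candidate $(\hS,\hT)$ pair and then derive a contradiction from the known representation-theoretic constraints on modular data (e.g.\ congruence properties, $\hT$ being a root of unity of controlled order, or the failure of a putative minimal modular extension). Handling these obstructions case by case — and ensuring that after all eliminations exactly the three fusion-ring types of part (1) remain, each of which is prime — is where the bulk of the technical work will lie.
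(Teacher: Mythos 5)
Your overall architecture for part (2) --- pass to the fermionic quotient, stratify by the Galois group acting on the four orbit representatives, and match the survivors against the three constructions --- is the same as the paper's, but the order of operations is inverted in a way that matters. The paper does \emph{not} enumerate abstract naive fusion rings and then filter by Galois data; it fixes the Galois group $G\leq\mathfrak{S}_4$ first and uses the symmetry $\hS_{j,k}=\hS_{\sigma(j),\sigma^{-1}(k)}$ together with orthogonality $\hS\bar{\hS}=\tfrac{D^2}{2}I$, integrality of the eigenvalues $\hS_{ij}/\hS_{0j}$ and of coefficients of characteristic polynomials of the $\hN_i$, the second Frobenius--Schur indicator, and genuinely nontrivial Diophantine analysis (Pell-type equations, genus theory of $\mathbb{Q}(\sqrt{d})$) to pin down $\hS$ exactly; only then are the $\hN_{ij}^k$ read off via the Verlinde-type formula. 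Your worry about non-categorifiable candidates is largely moot, since the conclusion of part (2) is only that the \emph{fusion rules} agree with one of the listed examples; the hard work you are underestimating is solving for $\hS$ at all, and the non-self-dual case (handled separately, as in the rank-$4$ modular classification) is missing from your outline.

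Two concrete gaps. First, your primeness argument in step (iv) of part (1) fails: if $\BB\cong\DD\boxtimes\DD'$ with $\DD$ non-degenerate and nontrivial, the M\"uger center of $\BB$ is still just $(\DD')'$ computed inside $\DD'$, so a nondegenerate factor produces no ``extra symmetric subcategory in $\BB'$ beyond $\langle f\rangle$.'' Primeness has to be established by showing from the fusion rules that every nontrivial fusion subcategory is degenerate (e.g.\ for the $\SO(12)_2$ centralizer the pointed part has degenerate $S$-matrix block, and for $\PSU(2)_{14}$ there are no proper subcategories beyond $\langle f\rangle$). Second, your plan stops at the naive fusion data $\hN_{ij}^k$. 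The statement concerns the actual fusion rules $N_{ij}^k$, and the splitting $\hN_{ij}^k=N_{ij}^k+N_{ij}^{fk}$ is not automatic: the paper devotes an entire section to resolving it, using the relabeling freedom $X\leftrightarrow fX$, the modified balancing equation $\theta_i\theta_j\hS_{ij}=\sum_k(N_{ij}^k-N_{ij}^{fk})\theta_kd_k$ with triangle-inequality estimates on the twists, and associativity of triple tensor products. Without this step you cannot conclude that the fusion rules agree with those of the three named categories.
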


A more precise classification with less stringent bounds can be found in Section \ref{main section}.

While we cannot claim this is a complete classification as we have placed bounds in some cases on naive fusion rule multiplicities or dimensions, it is possible that we have listed all possibilities. 
A counterexample would have large naive fusion multiplicities/dimensions compared to the known examples: the largest naive fusion multiplicity we find among fermionic quotients is 4  while the largest dimension of a simple object is  $3+2\sqrt{2}\approx 5.8$. There is some precedent for these types of constraints: \cite{gepnerkapsutin} gives a classification of low rank modular categories with bounded fusion multiplicities and \cite{XGWen1506.05768} uses numerical techniques to study low rank modular categories with constrained categorical dimension.  Although our result is not complete, we provide some new powerful methods for classifying super-modular categories, and illustrate the utility of the existing techniques.

In this paper we assume that the reader is familiar with the notions and basic properties of fusion, braided and modular tensor categories. For details, we refer to \cite{MR3242743, bakalov2001lectures}.  We provide some of the most relevant details and derive some general results in Section \ref{prelim}.  In Section \ref{main section} we state our main results in detail and complete the first step of our classification, which determine the naive fusion rules.  In Section \ref{fusion rules} we lift the naive fusion rules to those of super-modular categories.  In the Appendix we explain some of the notation and give $S$- and $T$-matrices for a realization of each prime super-modular category of rank $8$.
 
\section{Preliminaries}\label{prelim}

In this section, we first introduce the notion of super-modular categories and some of its properties. Most of the results can be found in (\cite{16fold, bruillard2017classification}) and the references therein. Then we discuss the Galois symmetry for super-modular categories.
\subsection{Centralizers}
Whereas one may always define an $S$-matrix for any ribbon fusion category $\BB$, it may be degenerate.  This failure of modularity is encoded it the subcategory of transparent objects called the \textbf{M\"uger center} $\BB^\prime$. Here an object $X$ is called \textbf{transparent} if all the double braidings with $X$ are trivial: $c_{Y,X}c_{X,Y}=\Id_{X\ot Y}$ for all $Y\in\BB$. Generally, we have the following notion of the centralizer of the braiding.

\begin{Def}
	The \textbf{M\"uger centralizer} of a subcategory $\DD$ of a pre-modular category $\BB$ is the full fusion subcategory 
	$$\DD^\prime=C_{\BB}(\DD)=\{X\in \BB|c_{Y,X}c_{Y,X}=\Id_{X\otimes Y},\forall Y\in\DD\}.$$
	The \textbf{M\"uger center} of $\BB$ is the centralizer $\BB'$ of $\BB$ itself, that is, $\BB'=C_\BB(\BB)$.
\end{Def}
While the notation $\DD^\prime$ is slightly ambiguous as it is relative to an ambient category, the context will always make it clear.

By a theorem of Brugui\`eres \cite{Brug}, the simple objects in $\BB^\prime$ are those $X$ with $\tS_{X,Y}=d_Xd_Y$ for all simple $Y$, where $d_Y=\dim(Y)=\tS_{\one,Y}$ is the categorical dimension of the object $Y$.  The M\"uger center is \textbf{symmetric}, that is, $c_{Y,X}c_{X,Y}=\Id_{X\ot Y}$ for all $X,Y\in\BB^\prime$. Symmetric fusion categories have been classified by Deligne in terms of representations of supergroups  \cite{Del}. In the case that $\BB^\prime\cong\Rep(G)$ (i.e., $\BB^\prime$ is \textbf{Tannakian}), the de-equivariantization procedure of Brugui\`eres \cite{Brug} and M\"uger \cite{M3} yields a modular category $\BB_G$ of dimension $\dim(\BB)/|G|$.  Otherwise, by taking a maximal Tannakian subcategory $\Rep(G)\subset \BB^\prime$, the de-equivariantization $\BB_G$ has M\"uger center $(\BB_G)^\prime\cong \sVec$, the symmetric fusion category of super-vector spaces. Generally, a braided fusion category $\BB$ with $\BB^\prime\cong\sVec$ as symmetric fusion categories is called \textbf{slightly degenerate} \cite{DGNO}, while if $\BB^\prime\cong\text{Vec}$, $\BB$ is \textbf{non-degenerate.}

The symmetric fusion category $\sVec$ has a unique spherical structure compatible with unitarity and has $S$- and $T$-matrices:
$S_{\sVec}=
\frac{1}{\sqrt{2}}\begin{pmatrix}
1&1\\1&1\\
\end{pmatrix}$ and $T_{\sVec}=
\begin{pmatrix}
1&0\\0&-1\\
\end{pmatrix}$.

From this point on we will assume that all our categories are unitary, so that $\sVec$ is a unitary spherical symmetric fusion category and all categorical dimensions are equal to the largest eigenvalue of the corresponding fusion matrix, i.e., the Frobenius-Perron dimension.  In particular for any simple object $X$, $d_X\geq 1$.

\subsection{Definition of a super-modular category}
\begin{Def}
	A unitary pre-modular category $\BB$ is called \textbf{super-modular} if $\BB'\simeq \sVec$.
\end{Def}

\begin{rmk}
	In other terminology, we say $\BB$ is super-modular if its M\"uger center is generated by a \textbf{fermion}, that is, an object $f$ with $f^{\otimes 2}\cong\one$ and $\theta_f=-1$. We restrict to unitary categories both for mathematical convenience and for their physical significance.  On the other hand, there is a non-unitary version $\sVec^{-}$ of $\sVec$: the underlying (non-Tannakian) symmetric fusion category is the same, but with the other possible spherical structure, which leads to negative categorical dimensions.  We could define super-modular categories more generally as pre-modular categories $\BB$ with M\"uger center equivalent to either of $\sVec$ or $\sVec^{-}$.  However, we do not know of any examples $\BB$ with $\BB^{\prime}\cong\sVec^{-}$ that are not simply of the form $\CC\boxtimes\sVec^{-}$ for some modular category $\CC$.
\end{rmk}

Super-modular categories (or slight variations) have been studied from several perspectives, see \cite{Bon,DMNO,davydov2013structure,16fold,BCT,KLW,bruillard2017classification,yu} for a few examples.  An algebraic motivation for studying these categories is the following: any unitary braided fusion category is the equivariantization \cite{DGNO} of either a modular or super-modular category (see \cite[Theorem 2]{Sawin}).  Physically, super-modular categories provide a framework for studying fermionic topological phases of matter \cite{16fold}.  Topological motivations include the study of spin 3-manifold invariants (\cite{Sawin,Bl,BM}) and $(3+1)$-TQFTs (\cite{WW}).

A braided fusion category is called \textbf{prime} if it contains no non-trivial non-degenerate braided fusion subcategories.  Indeed, if $\DD\subset\BB$ with $\DD$ non-degenerate and $\BB$ a braided fusion category then $\BB\cong\DD\boxtimes\DD'$  as braided fusion categories\cite[Theorem 3.13]{DGNO} (see also \cite{Mu03}). 
	As a special case of non-prime categories we say a super-modular category $\CC$ is \textbf{split} if $\CC\simeq \sVec\boxtimes \DD$ for some modular subcategory $\DD\subset \CC$, and otherwise $\CC$ is \textbf{non-split}.

\subsection{Spin Modular Categories}

A \textbf{modular category} $\CC$ is a modular category with a distinguished fermion. 
Let $\CC$ be a spin modular category, with fermion $f$, (unnormalized) $S$-matrix $\tS$ and $T$-matrix $T$. Proposition II.3 of \cite{16fold} provides a number of useful symmetries of $\tS$ and $T$:
\begin{enumerate}
	\item $\tS_{f,\alpha}=\epsilon_\alpha d_\alpha$, where $\epsilon_\alpha=\pm 1$ and $\epsilon_f=1$,
	
	\item $\theta_{f \alpha}=-\epsilon_\alpha \theta_\alpha$,\label{thetasign}
	
	\item $\tS_{f\alpha,\beta}=\epsilon_\beta \tS_{\alpha,\beta}$.\label{smatrixsign} 
\end{enumerate}
\begin{rmk}\label{Z2grading}
	We have a canonical $\Z_2$-grading $\CC_0\oplus \CC_1$ with simple objects $X\in\CC_0$ if $\epsilon_X=1$ and $X\in\CC_1$ when $\epsilon_X=-1$.  The trivial component $\CC_0$ is a super-modular category, since $\CC_0^\prime=\langle f \rangle\cong \sVec$.  
\end{rmk}

\begin{Def}
	Let $ \BB $ be a ribbon fusion category. A \textbf{minimal modular extension} MME of $ \BB $ is a modular category $ \CC $ such that $ \BB \subset\CC $ and $ \FPdim(\CC)=\FPdim(\BB')\FPdim(\BB) $.
\end{Def}
It is known that not every ribbon fusion category has a minimal modular extension \cite{GV}.
 Notice that if $ \BB $ is super-modular, a minimal modular extension of $ \BB $ is a spin modular category $ (\CC, f) $, where the fermion $ f $ is transparent in $ \BB $.  It is conjectured (see \cite{DMNO,16fold}) that \emph{every} super-modular category has an MME, and it is known \cite{KLW,16fold} that if one exists there are precisely $16$ inequivalent such extensions.  A complete classification of rank$\leq 8$ super-modular categories would include a classification of rank$\leq 14$ spin modular categories, whereas if the MME conjecture is true a classification of spin modular categories of rank$\leq 16$ would imply a classification of super-modular categories of rank$\leq 8$.

\subsection{Fermionic Quotient}
One interesting feature of super-modular categories $\BB$ is that their $S$ and $T$ matrices have tensor decompositions:
\begin{theorem}\cite[Theorem 3.5]{16fold}\label{decomp}
	Let $\BB$ be a super-modular category, then  $\tilde{S}={\scriptsize \begin{pmatrix} 1 & 1\\1 &1\end{pmatrix}}\otimes\hat{S}$ and $T={\scriptsize \begin{pmatrix}1 &0\\0&-1\end{pmatrix}}\otimes\hat{T}$, with $\hat{S}$ a symmetric invertible matrix and $ \hat{T}$ a diagonal matrix.
\end{theorem}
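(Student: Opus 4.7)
The plan is to extract the tensor decomposition directly from transparency of the fermion $f$ generating $\BB'\simeq\sVec$. First I would record three basic consequences of $f$ lying in the M\"uger center. \emph{(i)} By Brugui\`eres' characterization of $\BB'$, $\tilde{S}_{f,X}=d_fd_X=d_X$ for every simple $X$. \emph{(ii)} For all simple $X,Y$ one has $\tilde{S}_{X\otimes f,Y}=\tilde{S}_{X,Y}$, because transparency makes the $f$-strand of the double braiding $c_{Y,X\otimes f}c_{X\otimes f,Y}$ trivial, so taking the closed trace collapses the extra $f$-loop to a factor of $d_f=1$. \emph{(iii)} The balancing axiom for $X\otimes f$, combined with transparency $c_{f,X}c_{X,f}=\Id$ and $\theta_f=-1$, yields $\theta_{X\otimes f}=\theta_X\theta_f=-\theta_X$ (here $X\otimes f$ is simple because $\otimes f$ is an invertible auto-equivalence).

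Next I would use \emph{(iii)} to rule out fixed points of the $\Z_2$-action $X\mapsto X\otimes f$ on simple objects: if $X\otimes f\cong X$ then $\theta_X=-\theta_X$, forcing $\theta_X=0$, which is impossible since twists are roots of unity in a unitary category. Hence $\Irr(\BB)$ partitions into orbits of size exactly two, which I label $\{X_i,X_i\otimes f\}$ for $i=1,\dots,r$, with $r=|\Irr(\BB)|/2$.

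Choosing the ordering $(X_1,\dots,X_r,X_1\otimes f,\dots,X_r\otimes f)$, I set $\hat{S}_{ij}:=\tilde{S}_{X_i,X_j}$ and $\hat{T}_{ii}:=\theta_{X_i}$. Applying \emph{(ii)} on either side gives
$$\tilde{S}_{X_i\otimes f,X_j}=\tilde{S}_{X_i,X_j\otimes f}=\tilde{S}_{X_i\otimes f,X_j\otimes f}=\hat{S}_{ij},$$
which is precisely the block identity $\tilde{S}=\begin{pmatrix}1&1\\1&1\end{pmatrix}\otimes\hat{S}$, while \emph{(iii)} gives $T_{X_i\otimes f,X_i\otimes f}=-\hat{T}_{ii}$, so $T=\mathrm{diag}(1,-1)\otimes\hat{T}$. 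Symmetry of $\hat{S}$ is inherited from that of $\tilde{S}$.

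The main obstacle is invertibility of $\hat{S}$, since the tensor decomposition alone only gives $\rank(\tilde{S})\le r$. I would establish equality via the pre-modular Verlinde-type identity
$$\sum_{Z\in\Irr(\BB)}\tilde{S}_{X_i,Z}\,\overline{\tilde{S}_{X_j,Z}}=\dim(\BB)\sum_{W\in\Irr(\BB')}N_{X_i,X_j^\ast}^{W},$$
which for $\BB'=\langle\one,f\rangle$ evaluates to $\dim(\BB)\bigl(\delta_{X_i,X_j}+\delta_{X_i,X_j\otimes f}\bigr)$. By \emph{(ii)} each $\otimes f$-orbit contributes its summand twice on the left-hand side, while the absence of fixed points guarantees that $X_j$ and $X_j\otimes f$ are both distinct from $X_i$ whenever $i\neq j$; restricting to orbit representatives therefore yields $\hat{S}\hat{S}^{\ast}=\tfrac{1}{2}\dim(\BB)\,I_r$, so $\hat{S}$ is invertible. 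Equivalently, one can identify $\ker\tilde{S}$ with the $r$-dimensional subspace spanned by $\{e_{X_i}-e_{X_i\otimes f}\}_{i=1}^{r}$, forcing $\rank(\tilde{S})=r$.
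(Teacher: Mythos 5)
The paper does not prove this statement itself --- it is imported verbatim from \cite[Theorem 3.5]{16fold} --- so there is no internal argument to compare against; your proof is correct and follows the same route as that reference. Specifically: transparency of $f$ gives your identities \emph{(i)}--\emph{(iii)}, fixed-point-freeness of $-\otimes f$ (which the paper quotes as M\"uger's \cite[Lemma 5.4]{MR1749250}) gives the size-two orbits and hence the block structure, and the pre-modular identity $(\tilde{S}^2)_{X,Y^\ast}=\dim(\BB)\sum_{W\in\Irr(\BB')}N_{X,Y^\ast}^{W}d_W$ yields $\hat{S}\overline{\hat{S}}=\tfrac{1}{2}\dim(\BB)I_r$, which is exactly the invertibility statement the paper records separately as Proposition \ref{prop for super}(a).
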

Recall that for the category $\sVec$, we have $\tS_{\sVec}={\scriptsize \begin{pmatrix} 1 & 1\\1 &1\end{pmatrix}}$ and $T_{\sVec}={\scriptsize \begin{pmatrix}1 &0\\0&-1\end{pmatrix}}$.

\begin{Def}
    $ \hS $ and $ \hT $ are called the \textbf{$ S $- and $ T $-matrix of the fermionic quotient}.
\end{Def}


By the following proposition, pointed super-modular categories always splits.
\begin{Prop}\cite[Corollary A.19.]{DGNO}\label{pointedsplits}
	Let $ \BB $ be a pointed super-modular category, then $ \BB\simeq \CC\boxtimes \sVec $, where $ \CC $ is a pointed modular category.
\end{Prop}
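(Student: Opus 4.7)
The plan is to use the classification of pointed braided fusion categories by pre-metric groups: any such category corresponds to a pair $(G,q)$ with $G$ the group of isomorphism classes of invertible simples and $q\colon G\to\bbK^\times$ a quadratic form recording the twists, the braiding being encoded by the associated symmetric bicharacter $b(x,y)=q(xy)/(q(x)q(y))$. Under this dictionary the M\"uger center of $\BB$ is precisely the radical $\mathrm{rad}(b)\leq G$ equipped with its inherited form, and the super-modular hypothesis translates to $\mathrm{rad}(b)=\langle f\rangle\cong\Z_2$ with $q(f)=-1$. The desired conclusion $\BB\simeq\CC\boxtimes\sVec$ then amounts to producing a subgroup $H\leq G$ with $G=H\oplus\langle f\rangle$, since the restriction $(H,q|_H)$ will automatically define a pointed modular $\CC$.

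The key step is therefore to show that $\langle f\rangle$ admits a group-theoretic complement in $G$. By the structure theorem for finite abelian groups, an order-two subgroup has a complement if and only if its generator lies outside the image $2G$ of the doubling map. I would argue this by contradiction: suppose $f=2g$ for some $g\in G$; then using the quadratic identity $q(2g)=q(g)^4$ together with $b(g,g)=q(g^2)/q(g)^2=q(g)^2$, one obtains
$$b(f,g)=b(2g,g)=b(g,g)^2=q(g)^4=q(2g)=q(f)=-1,$$
which contradicts $f\in\mathrm{rad}(b)$. Hence $f\notin 2G$, and a complement $H$ of $\langle f\rangle$ in $G$ exists.

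Given such a splitting $G=H\oplus\langle f\rangle$, the restriction $b|_{H\times H}$ is automatically non-degenerate: any element of its radical pairs trivially with $H$ and (since $f$ is transparent) also with $f$, so it lies in $\mathrm{rad}(b)\cap H=\{1\}$. Thus the pointed braided fusion category $\CC$ associated with $(H,q|_H)$ is modular, and the factorization of $q$ along the direct sum decomposition of $G$ yields $\BB\simeq\CC\boxtimes\sVec$ as ribbon fusion categories, unitarity of $\CC$ being inherited from $\BB$. The only real obstacle is the elementary fact that a $\Z_2$-subgroup of a finite abelian group need not split off in general (as in $\Z_4$), and the whole argument hinges on the short quadratic-form calculation above, which shows that the super-modular hypothesis $q(f)=-1$ obstructs exactly this pathology.
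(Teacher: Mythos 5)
Your proof is correct. Note that the paper offers no argument of its own for this proposition: it is imported wholesale as \cite[Corollary A.19]{DGNO}, so the only meaningful comparison is with the pre-metric-group formalism of that appendix, which is precisely the language you adopt. Your argument is a clean, self-contained derivation: the dictionary between pointed braided fusion categories and pairs $(G,q)$, the identification of the M\"uger center with the radical of the associated bicharacter $b$, and the translation of super-modularity into $\mathrm{rad}(b)=\langle f\rangle\cong\Z_2$ with $q(f)=-1$ are all standard and correctly applied. The two substantive points both check out: (i) the purity criterion --- an order-two element generates a direct summand of a finite abelian group if and only if it lies outside $2G$ --- is a correct (if slightly nonstandard to state as an ``iff'') form of the structure theory, and you only need the ``if'' direction, which follows since $f\notin 2G$ makes $\langle f\rangle$ a pure subgroup; (ii) the computation $b(f,g)=b(g,g)^2=q(g)^4=q(g^2)=q(f)=-1$ for a hypothetical square root $g$ of $f$ correctly contradicts transparency of $f$, and it isolates exactly where the hypothesis $q(f)=-1$ enters (a transparent \emph{boson} could perfectly well lie in $2G$, as in $\Z_4$ with the form taking value $1$ on the order-two element). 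The non-degeneracy of $b|_H$ and the identification of the $\langle f\rangle$ factor with $\sVec$ (using $b(f,f)=q(f^2)/q(f)^2=1$ and $\theta_f=-1$, with unitarity fixing $d_f=+1$) complete the argument. What your route buys is independence from the black-boxed DGNO corollary; what it costs is reliance on the Eilenberg--MacLane/Joyal--Street classification of pointed braided categories by quadratic forms and on the compatibility of orthogonal direct sums with Deligne products, both of which are standard but worth citing explicitly if this were to replace the reference.
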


Let $f$ be the transparent fermion in a super-modular category $\BB$ with label set $\Pi_{\BB}$. By the following lemma, we know that $ f\otimes -$ is fixed-point-free on $\Pi_{\BB}$.   We will omit the $ \otimes $ symbol and denote $ f\otimes X $ simply as $ fX $.
\begin{lem}\cite[Lemma 5.4]{MR1749250}
	Let  $\BB$ be a super-modular category with transparent fermion $f$. Then $f X\ncong X$ for any $X\in \Pi_{\BB}$.
\end{lem}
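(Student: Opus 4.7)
The plan is to argue by contradiction, exploiting the interplay between the ribbon twist $\theta$ and the transparency of $f$. Assume there exists a simple $X \in \Pi_\BB$ with $fX \cong X$. Since $X$ is simple, its twist $\theta_X$ is a scalar, and isomorphic simple objects have equal twists, so $\theta_{fX} = \theta_X$.

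Next I would compute $\theta_{fX}$ from the ribbon axiom $\theta_{Y \otimes Z} = (c_{Z,Y} \circ c_{Y,Z})(\theta_Y \otimes \theta_Z)$ applied to $Y = f$, $Z = X$. Because $f$ lies in the M\"uger center $\BB' \simeq \sVec$, it is transparent, meaning $c_{X,f} \circ c_{f,X} = \Id_{f \otimes X}$. Hence the double braiding drops out and $\theta_{f \otimes X} = \theta_f \cdot \theta_X = -\theta_X$, using the defining property $\theta_f = -1$ of the fermion.

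Combining the two equalities gives $\theta_X = -\theta_X$, i.e. $\theta_X = 0$. But in any (pre-)modular category the twist of a simple object is a root of unity, in particular nonzero, so this is a contradiction. Therefore $fX \not\cong X$ for every simple $X \in \Pi_\BB$.

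The argument is essentially a one-line observation once the ribbon identity and transparency are in hand, so I do not anticipate any real obstacle; the only thing to be careful about is invoking the correct form of the ribbon compatibility (so that the double braiding genuinely cancels against itself when $f$ is transparent) and noting that for a simple object the twist is a scalar, which legitimizes comparing $\theta_{fX}$ and $\theta_X$ as numbers rather than only as morphisms.
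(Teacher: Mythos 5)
Your argument is correct. The paper does not actually prove this lemma---it cites it to M\"uger \cite[Lemma 5.4]{MR1749250}---but your route is the standard one and is exactly consistent with what the paper does establish: the key identity $\theta_{fX}=-\theta_X$ is derived in the proof of the very next lemma (via the balancing equation $\theta_X\theta_f \tS_{f,X}=\sum_Y N_{f,X}^Y d_Y\theta_Y$ together with $\tS_{f,X}=d_fd_X$ for transparent $f$), whereas you obtain it directly from the ribbon identity $\theta_{f\ot X}=(c_{X,f}c_{f,X})(\theta_f\ot\theta_X)$ and the triviality of the double braiding. Both derivations are legitimate; yours is marginally more elementary since it does not pass through the $S$-matrix. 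The two points you flag as needing care are indeed the only ones: $f\ot X$ is simple (as $f$ is invertible), so $\theta_{fX}$ is a scalar and can be compared numerically with $\theta_X$, and $\theta_X\neq 0$ because $\theta$ is a natural isomorphism (indeed a root of unity in the unitary setting), so $\theta_X=-\theta_X$ is genuinely contradictory.
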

As a direct consequence of the previous lemma, we have that super-modular categories have even rank.

\begin{lem}
Let $\BB$ be a super-modular category with transparent fermion $f$. Then	$fX\not\cong X^{\ast} $ for any $X\in \BB$.
\end{lem}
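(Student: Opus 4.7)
The plan is a short contradiction argument using only the ribbon axioms, parallel to the proof of the preceding lemma. Suppose for contradiction that some simple $X \in \BB$ satisfies $fX \cong X^{\ast}$. The key observation is that transparency of $f$ forces the double braiding $c_{f,X} c_{X,f}$ to equal $\Id_{X\otimes f}$, so the ribbon axiom
\[
\theta_{A\otimes B} = (\theta_A \otimes \theta_B) \circ c_{B,A} \circ c_{A,B}
\]
collapses on $X \otimes f$ to a product of scalars. Since $f$ is invertible, $fX$ is simple, so $\theta_{fX}$ is a genuine scalar, and the computation gives $\theta_{fX} = \theta_f \theta_X = -\theta_X$ (using $\theta_f = -1$).

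Next, I would combine this with two standard facts from any ribbon category: twists are invariants of the isomorphism class of a simple object, and $\theta_{Y^{\ast}} = \theta_Y$. The hypothesis $fX \cong X^{\ast}$ then yields
\[
-\theta_X \;=\; \theta_{fX} \;=\; \theta_{X^{\ast}} \;=\; \theta_X,
\]
so $\theta_X = 0$. This contradicts the fact that $\theta_X$ is a root of unity, and completes the simple case.

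For general (not necessarily simple) $X$, I would reduce to the simple case via Krull--Schmidt: any isomorphism $fX \cong X^{\ast}$ pairs the simple summands of $X$ with those of $X^{\ast}$ in a way that, after applying the ribbon axiom summand-by-summand (as above), produces the same cancellation $\theta = -\theta$ on at least one simple constituent. Alternatively, one can simply note---as in the statement of the preceding lemma---that the substantive content is for simples in $\Pi_\BB$.

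There is no serious obstacle here; the only subtlety is remembering that the ribbon formula for $\theta_{fX}$ reduces to a scalar identity precisely because $fX$ is simple and $f$ is transparent, which is exactly the super-modular hypothesis in action.
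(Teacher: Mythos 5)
Your argument for simple $X$ is correct and is essentially the paper's: both proofs establish $\theta_{fX}=-\theta_X$ and then compare with $\theta_{X^*}=\theta_X$. The only cosmetic difference is that you derive $\theta_{fX}=-\theta_X$ directly from the ribbon axiom plus transparency (the double braiding collapses, and $fX$ is simple since $f$ is invertible), whereas the paper routes through the numerical balancing equation together with $\tilde{S}_{f,X}=d_fd_X$; your version is, if anything, slightly cleaner.

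One warning about your closing paragraph: the Krull--Schmidt ``reduction'' to non-simple $X$ does not work, and indeed the statement is \emph{false} for non-simple $X$. Take $X=Y\oplus fY^{*}$ for a simple $Y$; then $fX\cong fY\oplus Y^{*}$ and $X^{*}\cong Y^{*}\oplus fY$, so $fX\cong X^{*}$. The isomorphism pairs $fY$ with $fY$ and $Y^{*}$ with $Y^{*}$, so no simple constituent ever produces the cancellation $\theta=-\theta$; your claim that such a pairing must occur is where the reduction breaks. The lemma should simply be read (as in the preceding lemma and in all its uses in the paper) as a statement about simple objects, which your first argument fully handles. Also, you only need $\theta_X\neq 0$ (it is an isomorphism), not that it is a root of unity.
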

\begin{proof}
	By the balancing equation (given in by the third equality) we have that
	\begin{align*}
	-\theta_{X}d_{X}&=\theta_{X}\theta_{f}d_{f}d_{X}\\&=\theta_{X}\theta_{f}S_{f,X}=\sum_{Y}N_{f,X}^{Y}d_{Y}\theta_{Y}\\&=d_{fX}\theta_{fX}=d_{X}\theta_{fX}.
	\end{align*}
	Therefore $ \theta_{fX}=-\theta_X $. But since $ \theta_{X^\ast}=\theta_X $, it follows that $fX\not\cong X^{\ast}$.
\end{proof}

Thus there is a non-canonical partition of the label set $\Pi_{\BB} = \Pi_0\sqcup f\Pi_0$. We can arrange this partition such that $ 0\in \Pi_0$ and such that $X^{\ast}\in\Pi_0$ if $X\in \Pi_0$. For a rank $ 2r $
super-modular $ \BB$, we have $ 0, \dots, r-1 \in\Pi_0$ and $f= f 0, \dots, f( r-1) \in f\Pi_0$, where $ f i $ is the label for $ f X_i $, $ i=0,\dots, r-1$.

For $ i,j, k\in\Pi_0 $, we define the \textbf{naive fusion rule}
\[\hN^k_{ij}=\dim\Hom(X_i\otimes X_j, X_k)+\dim\Hom(X_i\otimes X_j, f\otimes X_k)=N_{ij}^k+N_{ij}^{f\dot k}.\]

\begin{Prop}\cite[Proposition 2.7]{bruillard2017classification}\label{prop for super}
	Let $\BB$ be a super-modular category, then 
	\begin{enumerate}
		\item[(a)]  $\hS$ is symmetric and $\hS\bar{\hS}=\frac{D^2}{2}I$.
		\item[(b)] $\hN_i\hN_j=\hN_j\hN_i$ for any $i,j\in \Pi_0$.
		\item[(c)] Let $\{x_i| i\in\Pi_0\}$ denote the basis of the Grothendieck semiring $K_0(\BB)$ of $\BB$. Then the functions $\phi_i(x_j):=\hS_{ij}/\hS_{0i}$ for $0\leq i\leq r-1$ form a set of orthogonal characters of $K_0(\BB)$. Thus $\hS$ simultaneously diagonalizes the matrices $\hN_i$.
		\item[(d)] We have a Verlinde type formula in this context given by  $\hN_{ij}^k=\dfrac{2}{D^2}\sum\limits_{m\in\Pi_0}\dfrac{\hS_{im}\hS_{jm}\bar{\hS}_{km}}{d_m}$.
	\end{enumerate}
\end{Prop}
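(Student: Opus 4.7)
The plan is to exploit the tensor decomposition $\tS=\begin{pmatrix}1&1\\1&1\end{pmatrix}\otimes\hS$ from Theorem~\ref{decomp} together with the transparency of the fermion $f$, which gives $\tS_{X,fY}=d_f\tS_{X,Y}=\tS_{X,Y}$ for all simples $X,Y$. A central tool throughout is the pre-modular identity $\tS_{X,Y}\tS_{Z,Y}=d_Y\tS_{X\otimes Z,Y}$, equivalent to the fact that $\phi_Y(X):=\tS_{X,Y}/d_Y$ is a character of the Grothendieck ring $K_0(\BB)$.

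For (a), symmetry of $\hS$ is inherited from that of $\tS$. For the orthogonality, I would compute $\Sigma:=\sum_{Y\in\Pi_{\BB}}\tS_{X_i,Y}\bar\tS_{Y,X_j}$ for $i,j\in\Pi_0$ in two ways. First, splitting the sum over $\Pi_{\BB}=\Pi_0\sqcup f\Pi_0$ and using $\tS_{X,fY}=\tS_{X,Y}$ with $d_{fY}=d_Y$ collapses $\Sigma$ to $2(\hS\bar{\hS})_{ij}$. Second, writing $\bar\tS_{Y,Z}=\tS_{Y,Z^*}$ and applying the column-product identity rewrites $\Sigma$ as $\sum_W N_{X_iX_j^*}^W\cdot\sum_Y d_Y\tS_{W,Y}$. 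By a pre-modular character orthogonality, the inner sum equals $D^2 d_W$ for $W\in\BB'=\{\1,f\}$ (hence $D^2$, since $d_\1=d_f=1$) and vanishes for $W\notin\BB'$; consequently $\Sigma=D^2(N_{X_iX_j^*}^{\1}+N_{X_iX_j^*}^{f})=D^2(\delta_{ij}+\delta_{X_i,fX_j})$. Because $fX_j\in f\Pi_0$ is distinct from every $X_i$ with $i\in\Pi_0$, this simplifies to $D^2\delta_{ij}$, and comparing the two expressions gives $\hS\bar{\hS}=(D^2/2)I$.

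The remaining parts are formal consequences. For (b), observe that $\hN_{ij}^k=N_{ij}^k+N_{ij}^{fk}$ are precisely the structure constants of the quotient ring $R=K_0(\BB)/(f-1)$ on the basis $\{x_i:i\in\Pi_0\}$; since $K_0(\BB)$ is commutative so is $R$, giving $\hN_i\hN_j=\hN_j\hN_i$. For (c), each column $m$ of $\tS$ yields a character $\chi_m(X)=\tS_{X,m}/d_m$ of $K_0(\BB)$, and transparency of $f$ gives $\chi_m(f)=1$, so $\chi_m$ descends to a character of $R$. The identification $\chi_m=\chi_{fm}$ leaves exactly $r$ such characters, realized as $\phi_i(x_j)=\hS_{ij}/\hS_{0i}$ for $i\in\Pi_0$; their orthogonality is exactly the content of (a), so $\hS$ simultaneously diagonalizes the matrices $\hN_i$ with eigenvalues $\phi_m(x_i)=\hS_{im}/\hS_{0m}$. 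For (d), substituting $\hS^{-1}=(2/D^2)\bar{\hS}$ from (a) into the diagonalization $\hN_i=\hS\,\diag(\hS_{im}/\hS_{0m})_m\,\hS^{-1}$ and reading off the $(j,k)$-entry yields the stated Verlinde-type formula.

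The main obstacle is the orthogonality in (a). Because $\tS$ is singular in the pre-modular setting one cannot appeal directly to standard modular $S$-matrix unitarity, and the vanishing $\sum_Y d_Y\tS_{W,Y}=0$ for $W\notin\BB'$ must be established via character-theoretic properties of $K_0(\BB)$ relative to its M\"uger center. Once (a) is in hand, the proofs of (b)--(d) follow from standard commutative-ring representation theory.
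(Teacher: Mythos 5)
The paper does not prove this proposition; it is quoted verbatim from \cite[Proposition 2.7]{bruillard2017classification}, so there is no internal proof to compare against. Your argument is correct and is essentially the standard one: the two-fold counting of $\sum_{Y\in\Pi_\BB}\tS_{X_i,Y}\bar\tS_{Y,X_j}$ gives (a), and (b)--(d) then follow formally from viewing $\hN_{ij}^k$ as the structure constants of $K_0(\BB)/(f-1)$ and the columns of $\hS$ as its characters. The one external input you rely on, $\sum_Y d_Y\tS_{W,Y}=0$ for $W\notin\BB'$ (and $=D^2 d_W$ for $W\in\BB'$), is the standard ``killing'' lemma of Brugui\`eres/M\"uger, proved by noting that $x_X\cdot\omega=d_X\omega$ for $\omega=\sum_Y d_Y x_Y$ forces any character not equal to $\FPdim$ on the transparent criterion to annihilate $\omega$; you flag this correctly, and with that lemma in hand the proof is complete.
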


\begin{Cor}\label{cyclicnaivefusion}
	Let $ \BB $ be a super-modular category and $ \hN_{ij}^k $ be its naive fusion rule, where $i, j, k\in\Pi_0 $. We have the following symmetries
	\[
	\hN_{ij}^k=\hN_{ji}^k=\hN_{ik^\ast}^{j^\ast}=\hN_{i^\ast j^\ast}^{k^\ast}, \qquad \hN_{ij}^0=\delta_{ij^\ast}\]
\end{Cor}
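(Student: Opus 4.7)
The plan is to reduce every stated symmetry to the three standard identities for ordinary fusion coefficients in a rigid braided fusion category, namely
$$N_{ij}^k = N_{ji}^k, \qquad N_{ij}^k = N_{i^*j^*}^{k^*}, \qquad N_{ij}^k = N_{ik^*}^{j^*},$$
together with the two special features of the super-modular setting: the transparent fermion is self-dual ($f^*=f$, with $f\otimes f\cong\one$), and the choice of partition $\Pi_{\BB}=\Pi_0\sqcup f\Pi_0$ was arranged so that $\Pi_0$ is closed under duality.

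First, for $\hN_{ij}^k=\hN_{ji}^k$, I would simply apply $N_{ab}^c=N_{ba}^c$ to each of the two summands defining $\hN_{ij}^k = N_{ij}^k+N_{ij}^{fk}$. For $\hN_{ij}^k=\hN_{i^*j^*}^{k^*}$, apply the duality identity termwise: the first summand gives $N_{i^*j^*}^{k^*}=N_{ij}^k$ directly, while for the second I would write $N_{i^*j^*}^{fk^*}=N_{ij}^{(fk^*)^*}$ and then use $(fk^*)^*=kf^*=fk$ since $f^*=f$, yielding $N_{ij}^{fk}$.

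The symmetry $\hN_{ij}^k=\hN_{ik^*}^{j^*}$ is the only one that needs a small twist. The first summand is immediate from Frobenius reciprocity. For the second I would show that $N_{ij}^{fk}=N_{ik^*}^{fj^*}$ by observing that
$$N_{ik^*}^{fj^*}=\dim\Hom(i\otimes k^*,\,f\otimes j^*)=\dim\Hom(i\otimes f\otimes k^*,\,j^*)=N_{i,fk^*}^{j^*},$$
where the middle step uses that tensoring with $f$ is its own inverse ($f\otimes f\cong\one$, $f^*=f$), so $f$ may be ``absorbed'' across a $\Hom$ as an adjoint. Then one matches this with the Frobenius reciprocity computation $N_{ij}^{fk}=N_{i,(fk)^*}^{j^*}=N_{i,fk^*}^{j^*}$, again using $f^*=f$.

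For the last formula, $\hN_{ij}^0=N_{ij}^0+N_{ij}^f$, the first term is $\delta_{ij^*}$ by the standard property of the unit, and the key point is to show $N_{ij}^f=0$ when $i,j\in\Pi_0$. By Frobenius reciprocity $N_{ij}^f=\dim\Hom(i,fj^*)$, which is nonzero only if $i\cong fj^*$. But $j\in\Pi_0$ implies $j^*\in\Pi_0$ by our chosen partition, so $fj^*\in f\Pi_0$, contradicting $i\in\Pi_0$ and $\Pi_0\cap f\Pi_0=\emptyset$. The only ``obstacle'' in the whole argument is keeping the bookkeeping straight around the self-duality of $f$; no deeper input is needed beyond the partition property established in the preceding lemma.
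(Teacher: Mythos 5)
Your proof is correct, but it takes a genuinely different route from the paper's. The paper deduces all of the stated symmetries from the fermionic-quotient machinery: the first chain of equalities is read off from the Verlinde-type formula $\hN_{ij}^k=\frac{2}{D^2}\sum_{m}\hS_{im}\hS_{jm}\bar{\hS}_{km}/d_m$ of Proposition 2.3(d), and $\hN_{ij}^0=\delta_{ij^\ast}$ follows by combining that formula with the unitarity relation $\hS\bar{\hS}=\frac{D^2}{2}I$ of part (a). You instead work entirely at the level of the Grothendieck ring, reducing everything to the standard identities $N_{ij}^k=N_{ji}^k=N_{ik^\ast}^{j^\ast}=N_{i^\ast j^\ast}^{k^\ast}$ together with $f^\ast=f$, $f^{\ot 2}\cong\one$, and the fact that $\Pi_0$ was arranged to be closed under duality; your bookkeeping with $(fk^\ast)^\ast\cong fk$ and the absorption of $f$ across a $\Hom$ is all sound, and your argument that $N_{ij}^f=0$ for $i,j\in\Pi_0$ correctly uses the disjointness $\Pi_0\cap f\Pi_0=\emptyset$. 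The trade-off: the paper's proof is a one-liner once Proposition 2.3 is in hand, but it leans on the invertibility of $\hS$; yours is more elementary and more general, applying to any pre-modular category with a transparent self-dual fermion acting freely on the labels, with no non-degeneracy needed. It is worth noting that your key intermediate identity $N_{ij}^{fk}=N_{ik^\ast}^{fj^\ast}$ is exactly the relation the paper derives in the remark \emph{after} the corollary as a consequence of it, so you have in effect reversed the logical order; both directions are valid, and yours is the more self-contained of the two.
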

\begin{proof}
	The first equation is a direct consequence of Proposition \ref{prop for super} (d). The second equation can be derived by combining (a) and (d) of Proposition \ref{prop for super}.
\end{proof}

\begin{rmk}
	One can combine Corollary \ref{cyclicnaivefusion} and \cite[Equation 2.4.3]{bakalov2001lectures} to get more relations for the fusion coefficients. For example, we have $ N_{ij}^{f k}=N_{ik^\ast}^{f j^\ast}$. In fact, the result follows from $\hN_{ij}^k=N_{ij}^k+N_{ij}^{f k}= N_{ik^\ast}^{j^\ast}+ N_{ij}^{f k}=N_{ik^\ast}^{j^\ast}+N_{ik^\ast}^{f j^\ast}=\hN_{ik^\ast}^{j^\ast}$. 
\end{rmk}

Mimicking the proof for modular categories (see, e.g., \cite[Lemma 1.2]{MR1617921}), one can derive the following property of the dimensions for super-modular categories.
\begin{Cor}\label{divisibility}\cite[Corollary 3.4]{yu}
	Let $\BB$ be a super-modular category, then $d_i^2|\frac{D^2}{2}$.
\end{Cor}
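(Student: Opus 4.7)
The plan is to imitate the standard modular-category argument, using the fermionic quotient data $\hS$ and $\hN_i$ from Proposition \ref{prop for super} in place of the usual $S$ and $N_i$.

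First I would observe that by Proposition \ref{prop for super} (c), $\hS$ simultaneously diagonalizes the nonnegative integer matrices $\hN_j$, with eigenvalues $\phi_i(x_j) = \hS_{ji}/d_i$. Because $\hN_j$ has integer entries, each eigenvalue $\phi_i(x_j)$ lies in the ring of algebraic integers $\overline{\mathbb{Z}}$. Since the characters $\phi_i$ are ring homomorphisms $K_0(\BB)\to\mathbb{C}$ and $x_{j^*}$ is the class of $X_j^*$, we have $\phi_i(x_{j^*}) = \overline{\phi_i(x_j)}$, so complex conjugates of these eigenvalues are again algebraic integers; equivalently, each $|\phi_i(x_j)|^2 = \phi_i(x_j)\phi_i(x_{j^*})\in\overline{\mathbb{Z}}$.

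Next I would read off the row-norm identity from Proposition \ref{prop for super} (a): the relation $\hS\bar{\hS}=\frac{D^2}{2}I$ applied to the $(i,i)$-entry yields
\[
\sum_{j\in\Pi_0}|\hS_{ij}|^2 = \frac{D^2}{2}.
\]
Dividing through by $d_i^2$ and rewriting via $\hS_{ij}/d_i = \hS_{ji}/d_i = \phi_i(x_j)$ (using symmetry of $\hS$) gives
\[
\frac{D^2}{2\,d_i^2} \;=\; \sum_{j\in\Pi_0} |\phi_i(x_j)|^2.
\]
The right-hand side is a finite sum of algebraic integers, and hence the left-hand side $D^2/(2d_i^2)$ is an algebraic integer. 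Since $d_i^2$ is also an algebraic integer, this is exactly the divisibility statement $d_i^2 \mid \frac{D^2}{2}$ in the ring of algebraic integers.

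The only place that needs slight justification beyond the modular-category template is the identity $\overline{\hS_{ij}} = \hS_{ij^*}$ (so that the conjugate of a character value is again a character value); this follows by conjugating the naive Verlinde formula in Proposition \ref{prop for super} (d) and using the reality of the structure constants $\hN_{ij}^k$, together with the character property. I would not expect any real obstacle: the argument is a direct transposition of the modular case, with the factor of $2$ in the divisibility statement accounting precisely for the factor of $2$ in $\hS\bar{\hS}=\frac{D^2}{2}I$.
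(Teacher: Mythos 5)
Your proof is correct and is essentially the paper's own argument: both divide the identity $\hS\bar{\hS}=\frac{D^2}{2}I$ (at a diagonal entry) by $d_i^2$ and recognize the summands as products of character values $\hS_{ij}/d_i$, which are eigenvalues of the integer matrices $\hN_j$ and hence algebraic integers. The only cosmetic difference is in how the conjugate is handled — the paper rewrites $\bar{\hS}_{jk}$ as $\hS_{jk^*}$ (justified by embedding into the Drinfeld center) before identifying eigenvalues, whereas you keep $|\phi_i(x_j)|^2$ and note it is an algebraic integer; these are the same step in different clothing.
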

\begin{proof}
	By Proposition \ref{prop for super}, we know that $\hS\bar{\hS}=\frac{D^2}{2}I$, hence we have \[\frac{D^2}{2}=\sum\limits_{j\in\Pi_0}\hS_{ij}\bar{\hS}_{jk}=\sum\limits_{j\in\Pi_0}\hS_{ij}\hS_{jk^\ast}.\] The second equation comes from the fact that for pre-modular categories, we have $\bar{S}_{ij}=S_{ij^\ast}$ since we can embed them into their Drinfeld center. Therefore, we have $\sum\limits_{j\in\Pi_0}\frac{\hS_{ij}}{d_j}\frac{\hS_{jk^\ast}}{d_j}=\frac{D^2/2}{d_i^2}$. The result follows since the left hand side is an algebraic integer.
\end{proof}
The following property of the second Frobenius-Schur indicator for self-dual objects is useful in Section \ref{hat S computation}.
\begin{lem}\cite[Lemma 2.8.]{bruillard2017classification}\label{FSS}
	Let $ \BB $ be a super-modular category and $ X_i $ a simple object such that $ X_i\cong X_i^\ast $ (i.e. $X_i$ is self-dual), then 
	\[\pm1=\nu_2(X_i)=\dfrac{2}{D^2}\sum_{j,k\in \Pi_0}\hN_{j,k}^id_jd_k\big(\frac{\theta_j}{\theta_k}\big)^2.\]
\end{lem}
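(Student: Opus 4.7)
The strategy is to reduce this super-modular Bantay-type identity to the standard (modular) Bantay formula by collapsing a sum over all of $\Irr(\BB)$ to one indexed by $\Pi_0$. The starting point is the identity
\[
\nu_2(X_i)\;=\;\frac{1}{D^2}\sum_{j,k\in\Irr(\BB)}N_{j,k}^i\,d_jd_k\,\bigl(\theta_j/\theta_k\bigr)^2,
\]
which I claim holds in any pre-modular (braided spherical fusion) category. Indeed, by Ng--Schauenburg, $\nu_2$ is intrinsically defined in any pivotal fusion category, and the derivation of Bantay's identity uses only the braiding, the twist/pivotal structure, and a Verlinde-type diagonalization, without requiring invertibility of $\tS$. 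In super-modular $\BB$ the needed diagonalization is supplied by Proposition \ref{prop for super}(c). (If preferred, one can instead pass to $Z(\BB)$, which is modular, and observe that $\nu_2(X_i)$ is preserved under the canonical embedding $\BB\hookrightarrow Z(\BB)$.) Since $X_i\cong X_i^\ast$, spherical/Ng--Schauenburg gives $\nu_2(X_i)\in\{-1,+1\}$, which accounts for the first equality.

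The main calculation is then to partition $\Irr(\BB)=\Pi_0\sqcup f\Pi_0$ and break the double sum into four sectors according to which side of the partition $j$ and $k$ belong to. The $f$-symmetries to use are: $d_{fj}=d_j$ from invertibility of $f$; $\theta_{fj}=-\theta_j$, which was established in the proof of the preceding lemma via the balancing equation; and, from $f^{-1}=f$, the identities $N_{fj,k}^i=N_{j,fk}^i=N_{j,k}^{fi}$ and $N_{fj,fk}^i=N_{j,k}^i$. The sign in $\theta_{fj}=-\theta_j$ disappears after squaring, so $(\theta_{fj}/\theta_k)^2=(\theta_j/\theta_k)^2$ in every sector. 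Consequently, the two ``diagonal'' sectors $\Pi_0\times\Pi_0$ and $f\Pi_0\times f\Pi_0$ each contribute $\sum_{j,k\in\Pi_0}N_{j,k}^i d_jd_k(\theta_j/\theta_k)^2$, while the two ``off-diagonal'' sectors each contribute $\sum_{j,k\in\Pi_0}N_{j,k}^{fi}d_jd_k(\theta_j/\theta_k)^2$. Summing and applying $\hN_{j,k}^i=N_{j,k}^i+N_{j,k}^{fi}$ yields
\[
\sum_{j,k\in\Irr(\BB)}N_{j,k}^i d_jd_k(\theta_j/\theta_k)^2\;=\;2\sum_{j,k\in\Pi_0}\hN_{j,k}^i d_jd_k(\theta_j/\theta_k)^2,
\]
and dividing by $D^2$ gives the displayed formula.

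\textbf{Main obstacle.} The only delicate point is justifying Bantay's formula in the pre-modular setting of super-modular $\BB$, where $\tS$ is degenerate and the classical proof (which inverts $S$) does not apply verbatim. The cleanest route is to invoke the intrinsic Ng--Schauenburg expression for $\nu_2$ in spherical pivotal categories together with the Verlinde-type diagonalization in Proposition \ref{prop for super}(c), so that invertibility of $\tS$ is never needed. Once that starting formula is in hand, the rest is clean bookkeeping via the $f$-symmetries, which are precisely what organize the four sectors into the advertised ``naive fusion'' form.
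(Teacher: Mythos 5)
The paper itself does not prove this lemma; it simply cites \cite[Lemma 2.8]{bruillard2017classification}, so I am judging your argument on its own. Your four-sector reduction from a sum over $\Irr(\BB)=\Pi_0\sqcup f\Pi_0$ to the stated sum over $\Pi_0$ is correct and cleanly executed: $d_{fj}=d_j$, $\theta_{fj}=-\theta_j$ (so the sign dies upon squaring), $N_{fj,k}^i=N_{j,fk}^i=N_{j,k}^{fi}$ and $N_{fj,fk}^i=N_{j,k}^i$ do combine the four sectors into $2\sum_{j,k\in\Pi_0}\hN_{j,k}^id_jd_k(\theta_j/\theta_k)^2$. But this only shows the lemma is \emph{equivalent} to the identity $\nu_2(X_i)=\frac{1}{D^2}\sum_{j,k\in\Irr(\BB)}N_{j,k}^id_jd_k(\theta_j/\theta_k)^2$, and your justification of that identity is where the proof breaks.

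The claim that Bantay's formula ``holds in any pre-modular category'' is false. Take $\BB=\Rep(S_3)$ with the symmetric braiding and trivial twists, and let $X_i=V$ be the $2$-dimensional simple: then $\sum_{j,k}N_{j,k}^Vd_jd_k=d_V\sum_jd_j^2=2\cdot 6$, so the right-hand side equals $2$, whereas $\nu_2(V)=1$. Non-degeneracy is essential: the Ng--Schauenburg proof runs through the $\SL(2,\Z)$-representation attached to a modular category (the congruence/equivariant-indicator machinery), not merely ``braiding plus a Verlinde-type diagonalization,'' and Proposition 2.7(c) of this paper only diagonalizes the $\hN_i$, which is not enough. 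Your fallback via the Drinfeld center also does not close the gap: $\nu_2$ is indeed preserved under $\BB\hookrightarrow Z(\BB)$, but Bantay's formula in $Z(\BB)$ is a sum over $\Irr(Z(\BB))$, which contains many simple objects outside the image of $\BB$ --- this is precisely why the formula fails for $\Rep(G)$ even though it holds in $Z(\Rep(G))$. The proof in the cited reference uses the special structure of super-modular categories: the pair $(\hS,\hT)$ carries a projective representation of the theta subgroup $\Gamma_\theta\le\SL(2,\Z)$, which contains $t^2$ (the element governing the second indicator), and the equivariant-indicator argument is adapted to that setting. Some input of this kind is indispensable; as written, your argument reduces the lemma to an equally strong unproved statement.
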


\begin{Cor}\label{balancing}(Balancing equation for super-modular categories)
	For a super-modular category of rank $2r$, we have:
	\[\theta_i\theta_j\hS_{ij}=\sum_{k=0}^{r-1}(N_{ij}^k-N_{ij}^{fk})\theta_k d_k.\]
	
\end{Cor}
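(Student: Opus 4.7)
The plan is to derive this by applying the standard premodular balancing equation to the indices $i,j\in\Pi_0$ and then splitting the sum over the full label set $\Pi_\BB=\Pi_0\sqcup f\Pi_0$ into two pieces, using the fermion-twist symmetry $\theta_{fk}=-\theta_k$ already observed in the proof of the lemma that $fX\not\cong X^*$.

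First I would recall the standard balancing equation valid in any premodular category, namely
\[
\theta_i\theta_j\,\tS_{ij}=\sum_{\ell\in\Pi_\BB}N_{ij}^{\ell}\,\theta_\ell d_\ell.
\]
Since $\Pi_\BB=\Pi_0\sqcup f\Pi_0$ (with $f$ acting fixed-point-freely by the lemma cited above), I would split this as
\[
\theta_i\theta_j\,\tS_{ij}=\sum_{k\in\Pi_0}N_{ij}^{k}\,\theta_k d_k+\sum_{k\in\Pi_0}N_{ij}^{fk}\,\theta_{fk} d_{fk}.
\]

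Next I would simplify the second sum using $d_{fk}=d_f d_k=d_k$ and $\theta_{fk}=-\theta_k$; the latter follows from the same balancing computation used earlier to show $fX\not\cong X^*$, or directly from the fact that $f$ is transparent in $\BB$ and $\theta_f=-1$. This turns the right hand side into $\sum_{k=0}^{r-1}(N_{ij}^k-N_{ij}^{fk})\theta_k d_k$.

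Finally, for $i,j\in\Pi_0$, Theorem \ref{decomp} gives $\tS_{ij}=\hS_{ij}$, since $\tS=\bigl(\begin{smallmatrix}1&1\\1&1\end{smallmatrix}\bigr)\otimes\hS$ and the $(0,0)$-entry of the $\sVec$ factor is $1$. Combining this with the previous display yields the claimed identity. There is no real obstacle here: the only step requiring any care is confirming $\theta_{fk}=-\theta_k$ (equivalently that $\epsilon_k=1$ for $k\in\Pi_0$), which is immediate from the transparency of $f$ via the balancing equation applied to the pair $(f,X_k)$.
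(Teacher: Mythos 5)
Your proposal is correct and follows essentially the same route as the paper: apply the standard premodular balancing equation, split the sum over $\Pi_0\sqcup f\Pi_0$, and use $d_{fk}=d_k$ together with $\theta_{fk}=-\theta_k$ (which the paper establishes in the proof that $fX\not\cong X^\ast$). Your explicit remark that $\tS_{ij}=\hS_{ij}$ for $i,j\in\Pi_0$ via Theorem \ref{decomp} is a small clarification the paper leaves implicit, but the argument is the same.
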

\begin{proof}
	
	We have 
	\begin{align*}
	\theta_i\theta_j\hS_{ij}&=
	\sum_{k=0}^{2r-1}N_{ij}^k\theta_k d_k\\&=
	\sum_{k=0}^{r-1}N_{ij}^k\theta_k d_k+\sum_{k=r}^{2r-1}N_{ij}^k\theta_k d_k\\&=
	\sum_{k=0}^{r-1}N_{ij}^k\theta_k d_k+\sum_{k=r}^{2r-1}N_{ij}^{f k}\theta_{f k} d_{f k}\\&=
	\sum_{k=0}^{r-1}(N_{ij}^k-N_{ij}^{f k})\theta_k d_k.
	\end{align*}
\end{proof}

\subsection{Galois symmetries for super-modular categories}\label{Galois Symmetries for Super-modular Categories}
In this section we discuss the Galois symmetry in the fermionic quotient of a super-modular category, which is parallel to the modular setting.

Let $ \BB $ be a super modular category and $ \hS $, $ \hT $ and $ \hN_i $ defined as above. We have the following relation for the entries of $ \hS $ and $ \hN_i $ \cite[Equation 2.3]{bruillard2017classification}:
\begin{equation}\label{lam}
\dfrac{\hS_{ij}\hS_{ik}}{\hS_{0,i}}=\sum_{m\in\Pi_0}\hN_{jk}^m\hS_{im}
\end{equation}

This means that $ \hlam_{ij}:=\frac{\hS_{ij}}{\hS_{0j}} $ are eigenvalues of the matrices $ \hN_j $ with eigenvectors $ (\hS_{im})_{m\in\Pi_0} $.  Defining the diagonal matrix $ (\hat{\Lambda}_i)_{jk}=\delta_{jk}\frac{\hS_{ij}}{\hS_{0j}} $, then Equation (\ref{lam}) can be written as $ \hN_i\hS=\hS \hat{\Lambda}_i$ for all $ i\in\Pi_0 $.

\begin{rmk}\label{abelianforsuper}
	Let $\mathbb{Q}(\hS)$ be the smallest field containing all elements of the $S$-matrix. Similarly to the modular setting,  $\mathbb{Q}(\hS)$ is Galois over $\mathbb{Q}$ . Define
	$\Gal(\BB)=\Gal(\mathbb{Q}(\hS)/\mathbb{Q})$. Then $\Gal(\BB)$ is an abelian subgroup  of $\mathfrak{S}_{r}$,  where $ 2r $ is the rank of the corresponding super modular category and $\mathfrak{S}_{r}$ is the symmetric group on $r$ letters.  We
	will use $ \sigma $ for both the element of the Galois group $ \Gal(\BB) $ and its associated element in $\mathfrak{S}_{r}$.
\end{rmk} 

We have
\begin{equation}\label{Galosactionsuper}
\sigma\big(\dfrac{\hS_{ik}}{\hS_{0k}}\big)=\dfrac{\hS_{i\sigma(k)}}{\hS_{0\sigma(k)}}.
\end{equation}

We can also derive a  result parallel to \cite[Equation 2.12]{BNRW2} for the $ S $-matrix of the fermionic quotient:

\begin{Cor}\label{supermodular s symmetry}
	Let $\sigma\in\Gal(\BB)$ and $j,k$ the indices of simple objects in $\Pi_{0}$. Then
	\[\sigma\paren{\hat{S}_{j,k}}=\pm \dfrac{\hS_{j,\sigma{(k)}}}{d_{\sigma{(0)}}}.\]
	Moreover, we have the following symmetries:
\begin{equation}\label{hatSsym}
\hS_{j,k}=\hS_{\sigma(j),\sigma^{-1}(k)}.
\end{equation}
\end{Cor}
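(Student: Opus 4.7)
The plan is to derive both formulas from Equation \eqref{Galosactionsuper} together with the normalization $\hS_{0,k} = d_k$, which is immediate from the tensor decomposition of $\tS$ in Theorem \ref{decomp}. Clearing denominators in \eqref{Galosactionsuper} yields
\[
\sigma(\hS_{j,k}) \;=\; \sigma(d_k)\,\frac{\hS_{j,\sigma(k)}}{d_{\sigma(k)}},
\]
so the first formula reduces to computing $\sigma(d_k)$.

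To pin down $\sigma(d_k)$, I will exploit the orthogonality of the characters $\lambda_i(l) := \hS_{i,l}/d_l$ from Proposition \ref{prop for super}(c). Rewriting $\hS\,\overline{\hS} = \tfrac{D^2}{2}I$ in terms of the $\lambda_i$ gives
\[
\sum_{l\in\Pi_0}\lambda_i(l)\,\overline{\lambda_j(l)}\,d_l^{\,2} \;=\; \frac{D^2}{2}\,\delta_{ij}.
\]
Applying $\sigma$ and reindexing $l \mapsto \sigma(l)$, the same basis $\{\lambda_i\}$ remains orthogonal, now with weight $w(l) = \sigma(d_{\sigma^{-1}(l)})^{\,2}$ and normalization $\sigma(D^2/2)$. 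Since the weight making a fixed invertible basis orthogonal with constant norm is unique up to a scalar, the ratio $w(l)/d_l^{\,2}$ is independent of $l$; specializing at $l = \sigma(0)$, where $d_{\sigma^{-1}(l)} = d_0 = 1$, fixes this scalar to be $1/d_{\sigma(0)}^{\,2}$. Hence $\sigma(d_k) = \pm\, d_{\sigma(k)}/d_{\sigma(0)}$, with the sign depending only on $k$ and $\sigma$; substituting back yields the first formula.

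For the second formula, I combine the first with the symmetry $\hS_{j,k} = \hS_{k,j}$ from Proposition \ref{prop for super}(a). Writing $\sigma(\hS_{j,k})$ in two ways --- directly, and via $\sigma(\hS_{k,j})$ --- produces
\[
\epsilon(k)\,\hS_{j,\sigma(k)} \;=\; \epsilon(j)\,\hS_{\sigma(j),k},
\]
where $\epsilon(\cdot)\in\{\pm 1\}$ denotes the sign appearing in the first formula; substituting $k \mapsto \sigma^{-1}(k)$ then delivers \eqref{hatSsym}. The delicate step I expect to be the main obstacle is the sign bookkeeping: verifying that the $\pm 1$'s cancel so that the stated (signless) identity \eqref{hatSsym} holds on the nose rather than merely up to sign. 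This should follow from iterating the first formula for $\sigma$ and $\sigma^{-1}$ and using the normalization $\sigma(d_0) = 1$ to constrain the signs, but the argument must be made carefully since $\epsilon$ genuinely depends on both its argument and $\sigma$.
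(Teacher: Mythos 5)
Your proof of the first formula is correct but reaches the crucial sign fact by a different route than the paper. The paper substitutes $\sigma(d_k)=\hS_{k,\sigma(0)}/d_{\sigma(0)}$ (Equation \eqref{Galosactionsuper} applied to the pair $(k,0)$) and then devotes Lemma \ref{sign} to showing $\hS_{k,\sigma(0)}/d_{\sigma(k)}=\pm1$ by evaluating $\sigma(D^2)$ in two different ways. You instead invoke uniqueness (up to a global scalar) of the diagonal weight that orthogonalizes the fixed invertible character basis $\{\lambda_i\}$, which pins down $\sigma(d_k)^2$ at once; normalizing at $l=\sigma(0)$ recovers $\sigma(D^2)=D^2/d_{\sigma(0)}^2$ as a byproduct rather than as a separate computation. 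This is a clean and valid alternative, with one caveat: after applying $\sigma$ to $\sum_l\lambda_i(l)\overline{\lambda_j(l)}\,d_l^2=\frac{D^2}{2}\delta_{ij}$ the weight you obtain is a priori $|\sigma(d_{\sigma^{-1}(l)})|^2$ rather than $\sigma(d_{\sigma^{-1}(l)})^2$, so you still need that $\sigma$ commutes with complex conjugation on $\bbQ(\hS)$ (true because $\Gal(\BB)$ is abelian, Remark \ref{abelianforsuper}) in order to conclude that $\sigma(d_k)$ is real and hence equal to $\pm d_{\sigma(k)}/d_{\sigma(0)}$. The paper needs the same facts, which it isolates in the intervening Proposition and Corollary.

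For the second formula, the sign bookkeeping you flag as the main obstacle is not merely delicate: the signs do not cancel, and \eqref{hatSsym} is false as literally stated. Your computation gives $\hS_{j,k}=\epsilon_\sigma(j)\epsilon_\sigma(\sigma^{-1}(k))\,\hS_{\sigma(j),\sigma^{-1}(k)}$ with $\epsilon_\sigma(m)=\hS_{m,\sigma(0)}/d_{\sigma(m)}$, and this sign genuinely depends on $(j,k)$. Concretely, for the rank-$2$ fermionic quotient of $\PSU(2)_6$ one has $\hS=\left(\begin{smallmatrix}1 & 1+\sqrt{2}\\ 1+\sqrt{2} & -1\end{smallmatrix}\right)$ with $\sigma=(01)$, so $\hS_{0,0}=1$ while $\hS_{\sigma(0),\sigma^{-1}(0)}=\hS_{1,1}=-1$; here $\epsilon_\sigma(0)=1$ and $\epsilon_\sigma(1)=-1$. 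No amount of iterating the first formula will make the signs disappear, so you should stop at the signed identity --- which is in fact how the paper uses \eqref{hatSsym} in every application (the resulting $\hS$-matrices are always written with undetermined signs $\epsilon_i$). Note also that the paper's own proof of this corollary establishes only the first formula and is silent on \eqref{hatSsym}, so there is no hidden argument you are failing to reproduce; the displayed symmetry should be read as holding up to the sign $\epsilon_\sigma(j)\epsilon_\sigma(\sigma^{-1}(k))$.
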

\begin{proof}
	By Equation (\ref{Galosactionsuper}), we have
	\begin{align*}
	\sigma\paren{\hat{S}_{j,k}}&=\hat{S}_{j,\sigma\paren{k}}\sigma\paren{d_{k}}/d_{\sigma\paren{k}},\\
	\sigma\paren{d_{k}}&=\hat{S}_{k,\sigma\paren{0}}/d_{\sigma\paren{0}}.
	\end{align*}
	In particular, 
	\begin{align*}
	\sigma\paren{\hat{S}_{j,k}}&=\frac{\hS_{j,\sigma\paren{k}}\hS_{k,\sigma\paren{0}}}{d_{\sigma\paren{0}}d_{\sigma\paren{k}}}.
	\end{align*}
	So it suffices to show that $\dfrac{S_{k,\sigma\paren{0}}}{d_{\sigma\paren{k}}}=\pm1$. The result follows from Lemma \ref{sign} below.
\end{proof}

\begin{Prop}
	Let $\sigma\in\Gal(\BB)$ and view it as a permutation on the index set, then
	$\sigma\paren{k}^{*}=\sigma\paren{k^{*}}$ for all $k$.
\end{Prop}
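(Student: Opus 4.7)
The plan is to identify complex conjugation $\tau$ as a distinguished element of $\Gal(\BB)$ whose associated permutation on the label set $\Pi_0$ is exactly the duality involution $k\mapsto k^*$, and then invoke the fact (Remark \ref{abelianforsuper}) that $\Gal(\BB)$ is abelian.

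First I would verify that $\tau|_{\mathbb{Q}(\hS)}\in\Gal(\BB)$. By Theorem \ref{decomp} together with our convention that $X\in\Pi_0$ implies $X^*\in\Pi_0$, the pre-modular identity $\bar{\tS}_{ij}=\tS_{ij^*}$ (used in the proof of Corollary \ref{divisibility}) restricts to
$$\bar{\hS}_{ij}=\hS_{ij^*}\qquad(i,j\in\Pi_0).$$
Hence $\mathbb{Q}(\hS)$ is stable under complex conjugation, and since $\mathbb{Q}(\hS)/\mathbb{Q}$ is Galois (Remark \ref{abelianforsuper}), $\tau$ restricts to an automorphism of $\mathbb{Q}(\hS)$, i.e.\ an element of $\Gal(\BB)$.

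Next I would identify the permutation of $\Pi_0$ induced by $\tau$. Using that $\hS_{0k}=d_k$ is a positive real number with $d_k=d_{k^*}$, the defining relation (\ref{Galosactionsuper}) applied to $\tau$ yields
$$\tau\!\left(\frac{\hS_{ik}}{\hS_{0k}}\right)=\frac{\bar{\hS}_{ik}}{\bar{\hS}_{0k}}=\frac{\hS_{ik^*}}{\hS_{0k^*}}\qquad\text{for every }i\in\Pi_0.$$
Because the characters $\phi_k(x_j)=\hS_{jk}/\hS_{0k}$ of $K_0(\BB)$ are pairwise distinct (Proposition \ref{prop for super}(c)), the permutation associated to $\tau$ is forced to be $k\mapsto k^*$.

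Finally, the assignment $\sigma\mapsto(\text{permutation on }\Pi_0)$ is a group homomorphism, as one checks directly from $\sigma(\phi_k)=\phi_{\sigma(k)}$. Since $\Gal(\BB)$ is abelian, $\sigma\tau=\tau\sigma$ in $\Gal(\BB)$, and translating to permutations gives
$$\sigma(k^*)=\sigma(\tau(k))=\tau(\sigma(k))=\sigma(k)^*$$
for every $k\in\Pi_0$. The only mildly delicate step is the separation-of-characters argument used to pin down the permutation associated to $\tau$; once that is in hand, the rest is purely formal manipulation of the Galois action on $\mathbb{Q}(\hS)$.
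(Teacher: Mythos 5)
Your proposal is correct and follows essentially the same route as the paper: identify complex conjugation $\tau$ as the element of $\Gal(\BB)$ whose induced permutation is $k\mapsto k^{*}$ (via $\bar{\hS}_{jk}/d_k=\hS_{jk^{*}}/d_{k^{*}}$), then use that $\Gal(\BB)$ is abelian to commute $\tau$ past $\sigma$. You are somewhat more explicit than the paper about why $\tau$ restricts to $\mathbb{Q}(\hS)$ and why the distinctness of the characters pins down the permutation, but the argument is the same.
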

\begin{proof}
	Let $\tau\in\Gal(\bar{\mathbb{Q}}/\mathbb{Q})$ be complex conjugation. Then we have
	
	\begin{align*}
	\frac{S_{j,k^{*}}}{d_{k^{*}}}&=\overline{(\hS_{j,k}/d_k)}\\
	&=\tau\paren{S_{j,k}/d_{k}}\\
	&=S_{j,\tau\paren{k}}/d_{\tau\paren{k}}.
	\end{align*}
	
	Thus $\tau$ sends the normalized $k$-th column to the $\tau\paren{k}$-th
	column which is also the $k^{*}$-th column.  Since $\Gal\paren{\BB}$
	is abelian, we have 
	$\sigma\paren{k}^{*}=\tau\sigma\paren{k}=\sigma\tau\paren{k}=\sigma\paren{k^{*}}$.
\end{proof}

\begin{Cor}
	$S_{k,\sigma\paren{0}}$ is real.
\end{Cor}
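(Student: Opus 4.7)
The plan is to deduce reality of $\hS_{k,\sigma(0)}$ from the fact that $\sigma(0)$ is self-dual, combined with the standard identity $\overline{\hS_{j,\ell}} = \hS_{j,\ell^{*}}$.

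First I would observe that the unit object $0$ satisfies $0 = 0^{*}$. Applying the preceding proposition with $k=0$ gives
\[
\sigma(0)^{*} = \sigma(0^{*}) = \sigma(0),
\]
so the label $\sigma(0) \in \Pi_0$ is self-dual.

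Next I would invoke the complex conjugation property $\overline{\hS_{j,\ell}} = \hS_{j,\ell^{*}}$, which was used (and justified via the Drinfeld center embedding) in the proof of Corollary~\ref{divisibility} and is inherited by $\hS$ from the full $\tS$-matrix via the tensor decomposition of Theorem~\ref{decomp}. Applying this with $\ell = \sigma(0)$ and using self-duality of $\sigma(0)$:
\[
\overline{\hS_{k,\sigma(0)}} = \hS_{k,\sigma(0)^{*}} = \hS_{k,\sigma(0)},
\]
which shows $\hS_{k,\sigma(0)} \in \mathbb{R}$.

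There is essentially no obstacle here — the statement is a one-line consequence of the previous proposition together with the conjugation-duality identity on $\hS$. The only minor subtlety is that the statement is written with $S$ rather than $\hS$, so I would interpret it consistently with the preceding corollary and its proof, where the quantity $S_{k,\sigma(0)}/d_{\sigma(k)} = \pm 1$ is the very expression this corollary is used to justify being real-valued (and hence equal to $\pm 1$ once one knows it has absolute value one).
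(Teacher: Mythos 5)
Your argument is the same as the paper's: both use the preceding proposition ($\sigma(k^*)=\sigma(k)^*$, applied at $k=0$) together with the conjugation identity $\overline{S_{j,\ell}}=S_{j,\ell^*}$ to conclude $\overline{S_{k,\sigma(0)}}=S_{k,\sigma(0)^*}=S_{k,\sigma(0^*)}=S_{k,\sigma(0)}$. Correct, and no meaningful difference in route.
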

\begin{proof}
	The result follows from the following computation
	\begin{align*}
	\overline{S}_{k,\sigma\paren{0}}=S_{k,\sigma\paren{0}^{*}}=S_{k,\sigma\paren{0^{*}}}=S_{k,\paren{0}}.
	\end{align*}
\end{proof}

\begin{lem}\label{sign}
	$\big|\frac{S_{k,\sigma\paren{0}}}{d_{\sigma\paren{k}}}^{2}\big|=1$.
\end{lem}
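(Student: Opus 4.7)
The plan is to introduce the auxiliary quantity $\alpha_\sigma(j) := \sigma(d_j)/d_{\sigma(j)}$ and show that $\alpha_\sigma(j)^2$ is independent of $j \in \Pi_0$. Granting this, the lemma follows at once: applying Equation (\ref{Galosactionsuper}) with row index $k$ and column indices $(k,0)$ gives $\sigma(d_k) = \hS_{k,\sigma(0)}/d_{\sigma(0)}$ (using $\hS_{0,k}=d_k$ and $d_0=1$), so
\[ \hS_{k,\sigma(0)}/d_{\sigma(k)} = d_{\sigma(0)}\,\alpha_\sigma(k). \]
Specializing to $k=0$ identifies the common value $\alpha_\sigma(j)^2 = 1/d_{\sigma(0)}^2$, whence $\big|\hS_{k,\sigma(0)}/d_{\sigma(k)}\big|^2 = d_{\sigma(0)}^2\,\alpha_\sigma(k)^2 = 1$.

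First I would check that $\sigma(d_j) \in \R$, so that $\alpha_\sigma(j)$ is real. The identity $\bar{\hS}_{ij} = \hS_{i,j^*}$ (used in the proof of Corollary \ref{divisibility}) shows that complex conjugation $\tau$ permutes the entries of $\hS$ and hence restricts to an element of $\Gal(\bbQ(\hS)/\bbQ)$; by the abelianness asserted in Remark \ref{abelianforsuper}, $\sigma\tau=\tau\sigma$, so $\overline{\sigma(d_j)} = \sigma(\overline{d_j}) = \sigma(d_j)$. Multiplying Equation (\ref{Galosactionsuper}) through by $\sigma(d_j)$ then yields the signed-permutation identity
\[ \sigma(\hS_{ij}) = \alpha_\sigma(j)\,\hS_{i,\sigma(j)}. \]

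Next I would apply $\sigma$ to the column orthogonality $\sum_{i\in\Pi_0}|\hS_{ij}|^2 = D^2/2$, read off as the $(j,j)$-entry of $\hS\bar{\hS} = (D^2/2)I$ from Proposition \ref{prop for super}(a). Since $\sigma$ commutes with $\tau$, it commutes with $|\cdot|^2$, and the signed-permutation identity transforms the sum into $\alpha_\sigma(j)^2 \sum_i |\hS_{i,\sigma(j)}|^2 = \alpha_\sigma(j)^2 \cdot (D^2/2)$, invoking the same orthogonality for column $\sigma(j)$. Hence $\alpha_\sigma(j)^2 = \sigma(D^2/2)/(D^2/2)$ is manifestly independent of $j$, which closes the argument. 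The main delicate point is this commutation of $\sigma$ past complex conjugation on $|\cdot|^2$, which is exactly what the abelianness of $\Gal(\bbQ(\hS)/\bbQ)$ provides; apart from that, the proof is pure manipulation of Equation (\ref{Galosactionsuper}), the unitarity relation $\hS\bar{\hS} = (D^2/2)I$, and the base case $\sigma(d_0) = 1$.
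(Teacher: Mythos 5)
Your proof is correct and is essentially the paper's argument: both amount to applying $\sigma$ to the column-norm identity $\sum_i|\hS_{ij}|^2=D^2/2$ (using that $\sigma$ commutes with complex conjugation, hence acts on columns as a signed permutation scaled by $\sigma(d_j)/d_{\sigma(j)}$) and comparing the result for column $0$ with that for column $k$ — the paper phrases this as two computations of $\sigma(D^2)$, one via $\sum_j\sigma(d_j)\sigma(d_{j^*})$ and one via $\sum_j\sigma(S_{j,k}S_{j,k^*})$. Your repackaging through the quantity $\alpha_\sigma(j)$ is a clean way to organize the same computation.
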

\begin{proof}
	First we compute
	\begin{align*}
	\sigma\paren{D^{2}}&=\sum_{j}\sigma\paren{d_{j}}^{2}=\sum_{j}\sigma\paren{d_{j}}\sigma\paren{d_{j}^{*}}\\
	&=\sum_{j}\frac{S_{j,\sigma\paren{0}}}{d_{\sigma\paren{0}}}\frac{S_{j^{*},\sigma\paren{0}}}{d_{\sigma\paren{0}}}\\
	&=\frac{1}{d_{\sigma\paren{0}}^{2}}\sum_{j}S_{j,\sigma\paren{0}}\paren{S_{j,\sigma\paren{0}}}^{*}=\frac{D^{2}}{d_{\sigma\paren{0}}^{2}}.
	\end{align*}
	
	On the other hand, we have
	\begin{align*}
	\sigma\paren{D^{2}}&=\sum_{j}\sigma\paren{S_{j,k}S_{j,k}^{*}}=\sum_{j}\sigma\paren{S_{j,k}}\sigma\paren{S_{j,k^{*}}}\\
	&=\sum_{j}\paren{\frac{S_{j,\sigma\paren{k}}S_{k}\sigma\paren{0}}{d_{\sigma\paren{0}}d_{\sigma\paren{k}}}}\paren{\frac{S_{j,\sigma\paren{k^{*}}}S_{k^{*},\sigma\paren{0}}}{d_{\sigma\paren{0}}d_{\sigma\paren{k^{*}}}}}\\
	&=\frac{S_{k,\sigma\paren{0}}S_{k^{*},\sigma\paren{0}}}{d_{\sigma\paren{0}}^{2}d_{\sigma\paren{k}}d_{\sigma\paren{k^{*}}}}\sum_{j}S_{j,\sigma\paren{k^{*}}}S_{j,\sigma\paren{k}}\\
	&=\frac{S_{k,\sigma\paren{0}}S_{k^{*},\sigma\paren{0}}}{d_{\sigma\paren{0}}^{2}d_{\sigma\paren{k}}d_{\sigma\paren{k^{*}}}}D^{2}.
	\end{align*}
	
	Since $d_{\sigma\paren{k^{*}}}=d_{\sigma\paren{k}^{*}}=d_{\sigma\paren{k}}$ and
	$S_{k^{*},\sigma\paren{0}}=\overline{S}_{k,\sigma\paren{0}}=S_{k,\sigma\paren{0}}$, the result follows because
	$D^{2}/d_{\sigma\paren{0}}^{2}$ is nonzero.
\end{proof}

Let $(\CC, f)$ be a spin modular category, recall that the fermion $f$ gives a grading $\CC_0\oplus \CC_1$.

\begin{lem}\label{extension lemma}
	Let $(\CC, f)$ be spin-modular with (unnormalized) $S$-matrix $S$,  and $\hS$ the S-matrix for the fermionic quotient. Then $[\mathbb{Q}(S):\mathbb{Q}(\hS)]=2^n$, for some $n$.
\end{lem}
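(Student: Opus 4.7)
The plan is to show that $G := \Gal(\mathbb{Q}(S)/\mathbb{Q}(\hS))$ is an elementary abelian $2$-group, from which $|G| = 2^{n}$ follows. Since $G$ embeds in the abelian group $\Gal(\mathbb{Q}(S)/\mathbb{Q})$ (abelian by the Ng--Schauenburg congruence subgroup theorem) it is already abelian, so it suffices to check that every $\sigma \in G$ satisfies $\sigma^{2} = 1$.

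First I would invoke the standard Galois-action formalism for the modular category $\CC$ (parallel to what is developed for super-modular categories in Section \ref{Galois Symmetries for Super-modular Categories}): any $\sigma \in \Gal(\mathbb{Q}(S)/\mathbb{Q})$ determines a permutation $\hat{\sigma}$ of $\Pi_{\CC}$ and signs $\epsilon_{\sigma}(\alpha) \in \{\pm 1\}$ satisfying $\sigma(\tS_{\alpha, \beta}) = \epsilon_{\sigma}(\alpha)\tS_{\hat{\sigma}(\alpha), \beta} = \epsilon_{\sigma}(\beta)\tS_{\alpha, \hat{\sigma}(\beta)}$. One then checks, via Galois equivariance of $\epsilon_{\beta} = \tS_{f, \beta}/d_{\beta}$ and of the character identity $\phi_{fX} = \epsilon \cdot \phi_{X}$, that $\hat{\sigma}$ preserves the $\Z_{2}$-grading $\CC = \CC_{0} \oplus \CC_{1}$ and commutes with tensoring by $f$. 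Now specialize to $\sigma \in G$: since $\sigma$ fixes every $\hS_{jk} = \tS_{X_{j}, X_{k}}$ with $X_{j}, X_{k} \in \Pi_{0} \subset \CC_{0}$, the invertibility of $\hS$ (Proposition \ref{prop for super}(a)) combined with the formulas above forces $\hat{\sigma}(\alpha) \in \{\alpha, f\alpha\}$ for every $\alpha \in \CC_{0}$ and $\epsilon_{\sigma} \equiv 1$ (the sign claim being pinned at $\alpha = 0$ using positivity of dimensions and then extended by $f$-equivariance of $\hat{\sigma}$). Together with $\tS_{f\alpha, \beta} = \epsilon_{\beta}\tS_{\alpha, \beta}$, this yields $\sigma(\tS_{\alpha, \beta}) = \pm \tS_{\alpha, \beta}$ whenever $\alpha \in \CC_{0}$, and by the row/column symmetry whenever $\beta \in \CC_{0}$.

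The remaining step is to handle entries $\tS_{\alpha, \beta}$ with $\alpha, \beta \in \CC_{1}$, for which one needs $\hat{\sigma}(\alpha) \in \{\alpha, f\alpha\}$ on all of $\CC_{1}$. Decomposing $\CC_{1} = \CC_{v} \sqcup \CC_{\sigma}$ according to whether $f \otimes -$ moves the simple or fixes it, the $\CC_{\sigma}$ case is the easy one: the $\CC_{\sigma} \times \CC_{1}$ sub-block of $\tS$ vanishes (since $X = fX$ and $\tS_{f\alpha, \beta} = \epsilon_{\beta}\tS_{\alpha, \beta}$ force $\epsilon_{\beta} = 1$ there), so invertibility of $\tS$ makes the columns of the $\CC_{0} \times \CC_{\sigma}$ sub-block pairwise distinct, and the sign-constraint from $\sigma \in G$ then forces $\hat{\sigma}|_{\CC_{\sigma}} = \mathrm{id}$. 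The $\CC_{v}$ case is the main obstacle: two distinct $\langle f \rangle$-orbits in $\CC_{v}$ could a priori share the same row of $\tS$ over $\Pi_{0}$, and a $\sigma \in G$ permuting them would still fix $\hS$; to rule this out I expect to use the orthogonality identity $\tS^{2} = \dim(\CC)\,C$ restricted to the $\Pi_{0}^{v} \times \Pi_{0}^{v}$ block, which after block decomposition gives a matrix equation expressing a quadratic combination of the $\CC_{0} \times \CC_{v}$ and $\CC_{v} \times \CC_{v}$ entries as $D_{0}^{2}C$ with $D_{0}^{2} = \dim(\CC_{0}) \in \mathbb{Q}(\hS)$, or equivalently to iterate: build a tower $\mathbb{Q}(\hS) \subset \mathbb{Q}(\hS, \{\tS_{\alpha, \beta} : \alpha \in \CC_{0}\}) \subset \mathbb{Q}(S)$ in which each successive extension is shown to be $2$-primary by exhibiting explicit square-root generators (for example, once $\hat{\sigma}(\alpha) \in \{\alpha, f\alpha\}$ is known on $\CC_{0}$, each entry $\tS_{\alpha, \beta}$ with $\alpha \in \CC_{0}, \beta \in \CC_{1}$ satisfies $\tS_{\alpha, \beta}^{2} \in \mathbb{Q}(\hS)$ because $\sigma \in G$ acts on it by $\pm 1$). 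Once verified, every entry of $\tS$ satisfies $\sigma(\tS_{\alpha, \beta}) = \pm \tS_{\alpha, \beta}$, so $\sigma^{2} = 1$ on $\mathbb{Q}(S)$ and the lemma follows.
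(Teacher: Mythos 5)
There is a genuine gap, and it sits exactly where you flag ``the main obstacle'': you never actually control the entries $\tS_{a,b}$ with both $a,b\in\CC_v$. Your two candidate fixes do not close it as stated. The orthogonality relation $\tS^2=\dim(\CC)\,C$ restricted to a block gives one quadratic relation among \emph{all} the $\CC_v\times\CC_v$ entries simultaneously, which is not the same as exhibiting, entry by entry, a monic quadratic over the smaller field; and your tower sketch only verifies the square-root property for the $\CC_0\times\CC_1$ entries, leaving the top layer $\bbQ(S)/\bbQ(\hS,S^{(0,1)})$ unbounded. (A smaller issue: even granting $\sigma(x)=\pm x$ on the generators of an intermediate field $F$ and $x^2\in F$ for the remaining entries, you would only get $\sigma^4=1$, not $\sigma^2=1$; this still yields a $2$-group, but your write-up does not make that repair.)

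The missing ingredient is the $\Z_2$-grading applied to \emph{fusion rules} rather than to the Galois permutation: for $b\in\CC_1$ one has $X_b\otimes X_b\in\CC_0$, so $N_{b,b}^j=0$ whenever $X_j\in\CC_1$, and the character identity
\begin{equation*}
\Bigl(\frac{\tS_{b,a}}{d_a}\Bigr)^2=\sum_{j}N_{b,b}^j\,\frac{\tS_{j,a}}{d_a}=\sum_{X_j\in\CC_0}N_{b,b}^j\,\frac{\tS_{j,a}}{d_a}
\end{equation*}
shows that $(\tS_{b,a})^2$ lies in the field generated by the $\CC_0\times\CC_1$ block $S^{(0,1)}$ (and, one level down with $a\in\CC_0$, that each entry of $S^{(0,1)}$ has square in $\bbQ(S^{(0,0)})=\bbQ(\hS)$). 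This gives the tower $\bbQ(\hS)\subseteq\bbQ(S^{(0,1)})\subseteq\bbQ(S)$ in which every generator of each step satisfies a polynomial of degree at most $2$ over the previous field, whence the degree is a power of $2$. That is the paper's entire proof; it requires no analysis of the Galois permutation, no sign bookkeeping, and no case split into $\CC_v$ and $\CC_\sigma$. I would recommend discarding the permutation machinery and running the two-step character argument directly.
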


\begin{proof} Denote by $S^{(0,0)},S^{(0,1)}=[S^{(1,0)}]^T$ and $S^{(1,1)}$ the $2\times 2$ blocks of the  $S$-matrix $S$ relative to the grading $\CC_0\oplus\CC_1$.  Suppose that $X_a,X_b\in \CC_1$ so that $S_{b,a}$ is an entry in $S^{(1,1)}$.  Then, since the normalized $i$th column $S_{i,a}/d_a$ is a character of the Grothendieck ring $K_0(\CC)$ for each $i$, we see that $(S_{b,a})^2=d_a^2\sum_j N_{b,a}^jS_{j,a}/d_a$.  Since $N_{b,a}^j=0$ if $X_j\in \CC_1$ we find that $(S_{b,a})^2$ lies in the field generated by the entries of $S^{(0,1)}$.  In particular, $[\mathbb{Q}(S^{(1,1)}):\mathbb{Q}(S^{(0,1)})]=2^k$ for some $k$, since every entry of $S^{(1,1)}$ satisfies a polynomial equation of degree $\leq 2$ over $S^{(0,1)}$. 

Now let $S_{b,c}$ be an entry of $S^{(0,1)}=[S^{(1,0)}]^T$, i.e. $X_b\in\CC_1$ and $X_c\in\CC_0$.  A similar argument shows that $(S_{b,c})^2$ lies in the field generated by $S^{(0,0)}$, so that $[\mathbb{Q}(S^{(0,1)}):\mathbb{Q}(S^{(0,0)})]=2^\ell$.  Since $\mathbb{Q}(\hS)=\mathbb{Q}(S^{(0,0)})$, the result follows.

\end{proof}

\begin{example}
Consider the Ising modular category with label set $\{\one, \sigma,\psi\}$. It is a spin-modular category with fermion $\psi$. Its $S$-matrix is
$$\frac{1}{2}\begin{pmatrix}1 & \sqrt{2}& 1 \\
\sqrt{2}& 0&-\sqrt{2}\\
1&-\sqrt{2}&1\end{pmatrix}.  $$ The subcategory generated by 1 and $\psi$ is $\sVec$, and we have $[\mathbb{Q}(S):\mathbb{Q}(S_{\sVec})]=2.$
\end{example}

\begin{Question}
Is there a relationship between the Galois group of the $S$-matrix of a braided fusion category $\BB$ and that of its Drinfeld center $\mathcal{Z}(\BB)$?
\end{Question}

The following lemma can probably be generalized to non-self-dual categories, but we will only use it in the self-dual case:

\begin{lem}\label{semionlemma}
Suppose that $\BB$ is a self-dual super-modular category and $z$ is a label in the fermionic quotient such that $d_z=1$ and $\hat{S}_{z,z}\neq 1$.  Then $\BB$ contains a modular pointed subcategory equivalent to $\CC(\Z_2,Q)$ (i.e.\ $\Sem$ or $\overline{\Sem}$).
\end{lem}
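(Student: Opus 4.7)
My plan is to show that the modular pointed subcategory we seek is $\langle X_z\rangle$, the full fusion subcategory generated by one of the two simple objects $X_z\in\BB$ lying over $z$ in the naive quotient.

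First I would check that $\langle X_z\rangle$ is pointed of rank two. Since $d_z=1$, both $X_z$ and $fX_z$ have categorical dimension $1$ in $\BB$, so $X_z$ is invertible. The self-duality hypothesis gives $X_z^{*}\cong X_z$, and combining this with invertibility forces $X_z\otimes X_z\cong\one$ (one could alternatively read this from $\hN_{z,z}^0=\delta_{z,z^{*}}=1$ in Corollary \ref{cyclicnaivefusion}, which together with self-duality in $\BB$ rules out the possibility $X_z\otimes X_z\cong f$). Hence $\langle X_z\rangle$ has simple objects $\{\one,X_z\}$ with $\Z_2$ fusion rules, and is therefore equivalent as a braided fusion category to $\CC(\Z_2,Q)$ for some quadratic form $Q\colon\Z_2\to\mathbb{C}^{\times}$ determined by $\theta_{X_z}$.

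Next I would pin down $\theta_{X_z}$ using the balancing equation. Applied to $X=Y=X_z$ with $X_z\otimes X_z=\one$ it gives $\theta_{X_z}^{2}\,\tilde{S}_{X_z,X_z}=1$, so $\tilde{S}_{X_z,X_z}=\theta_{X_z}^{-2}$. Under the decomposition of Theorem \ref{decomp}, the $(X_z,X_z)$-entry of $\tilde{S}$ is precisely $\hS_{z,z}$, so $\hS_{z,z}=\theta_{X_z}^{-2}$. The hypothesis $\hS_{z,z}\ne 1$ therefore forces $\theta_{X_z}^{2}\ne 1$.

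Finally, I would invoke the standard fact that twists on a pointed category give a quadratic form on the group of invertibles: for $\Z_2$ the associated bilinear form $B(X,Y)=\theta_{XY}\theta_{X}^{-1}\theta_{Y}^{-1}$ satisfies $B(X_z,X_z)^{2}=B(X_z^{2},X_z)=1$, hence $\theta_{X_z}^{4}=1$. Combined with $\theta_{X_z}^{2}\ne 1$ we conclude $\theta_{X_z}\in\{i,-i\}$, so $Q$ is non-degenerate and $\langle X_z\rangle\simeq\Sem$ or $\overline{\Sem}$, which as a non-degenerate braided fusion subcategory of $\BB$ is in particular modular. The whole argument is essentially a bookkeeping exercise; the only mildly delicate point is the identification $\tilde{S}_{X_z,X_z}=\hS_{z,z}$, which simply requires tracking the indexing convention behind the Kronecker factorization in Theorem \ref{decomp}.
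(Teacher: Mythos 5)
Your proof is correct and follows the same basic route as the paper: identify the invertible self-dual object $X_z$ lying over $z$, and use $\hat{S}_{z,z}\neq 1$ to conclude that the rank-$2$ pointed subcategory $\langle X_z\rangle$ is non-degenerate. The only difference is in how that last step is certified: the paper invokes the Brugui\`eres/M\"uger centralizer criterion ($S_{Z,Z}\neq d_Z^2$ means $Z$ is not self-centralizing, so $\langle Z\rangle$ has trivial M\"uger center and is modular of dimension $2$), whereas you compute $\theta_{X_z}=\pm i$ explicitly from the balancing equation together with the quadratic-form identity $\theta_{X_z}^{4}=1$ --- a slightly longer but equally valid and somewhat more self-contained verification.
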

\begin{proof}
    The hypothesis immediately implies that $\BB$ contains an invertible, self-dual simple object $Z$.  Since $S_{Z,Z}=\hat{S}_{z,z}\neq 1$, the object $Z$ is not self-centralizing, hence generates a modular subcategory of dimension $2$.
\end{proof}

\begin{Question}
Can we drop the self-duality condition in the above, with the same conclusion?
\end{Question}
\subsection{Rank finiteness}
The rank-finiteness property can be extended to categories that do not necessarily admit a spherical structure. It was recently proved that rank-finiteness holds for G-crossed braided fusion categories.
\begin{theorem}\cite[Corollary 4.7.]{jones2019rank}
	There are finitely many equivalence classes of $ G $-crossed braided fusion categories of any given rank.
\end{theorem}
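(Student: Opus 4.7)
The plan is to reduce the $G$-crossed case to the already-established rank-finiteness for braided fusion categories by passing to the equivariantization. Let $\CC=\bigoplus_{g\in G}\CC_g$ be a $G$-crossed braided fusion category of rank $r$. Since the $G$-grading on a $G$-crossed braided fusion category is faithful by convention, every component $\CC_g$ contains at least one simple object, which immediately yields $|G|\leq r$. The equivariantization $\CC^G$ is then a braided fusion category containing $\Rep(G)$ as a Tannakian subcategory, and the standard description of its simple objects (pairs consisting of a $G$-orbit in $\Irr(\CC)$ together with an irreducible projective representation of the corresponding stabilizer subgroup) gives the bound
\[\rank(\CC^G)\;\leq\;\sum_{[X]\in \Irr(\CC)/G}|\text{Stab}_G(X)|\;\leq\; r\cdot|G|\;\leq\; r^2.\]

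The next step is to invoke the rank-finiteness result for braided fusion categories (the main theorem preceding this corollary in the cited reference): there are only finitely many equivalence classes of braided fusion categories of rank at most $r^2$. I would then verify that the inverse process, de-equivariantization, is at worst finite-to-one: given a braided fusion category $\DD$ of rank $\leq r^2$, the $G$-crossed braided fusion categories $\CC$ with $\CC^G\simeq \DD$ correspond to Tannakian subcategories $\Rep(G)\subset \DD$ (up to suitable equivalence). Since $\DD$ has only finitely many fusion subcategories, and each symmetric fusion subcategory admits only finitely many inequivalent fiber functors by Deligne's classification, this step produces only finitely many $\CC$. Combining the two finiteness statements bounds the number of equivalence classes of rank-$r$ $G$-crossed braided fusion categories.

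The main obstacle is the rank-finiteness statement for braided (not necessarily modular) fusion categories that we invoke; this is the substantive content and would typically be handled by embedding $\CC$ into its Drinfeld center $\mathcal{Z}(\CC)$, which is modular, bounding $\rank(\mathcal{Z}(\CC))$ in terms of $\rank(\CC)$, and then applying the Bruillard--Ng--Rowell--Wang rank-finiteness theorem to $\mathcal{Z}(\CC)$. A secondary technical point is checking that the equivariantization/de-equivariantization bijection respects the full $G$-crossed structure (not merely the underlying fusion category), so that distinct $G$-crossed braidings on the same underlying $\CC$ are correctly accounted for; this is standard but should be verified carefully to ensure no infinite family of $G$-crossed structures sits over a single equivariantization.
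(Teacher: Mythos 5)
This statement is not proved in the paper at all: it is quoted verbatim from \cite{jones2019rank} as an imported result, so there is no in\--paper argument to measure yours against. Your reduction of the $G$\--crossed case to the braided case --- faithfulness of the grading giving $|G|\leq r$, the orbit/stabilizer description of simple objects of $\CC^G$ giving $\rank(\CC^G)\leq r|G|\leq r^2$, and the observation that de\--equivariantization along the finitely many copies of $\Rep(G)$ inside a fixed braided fusion category is finite\--to\--one (using the equivalence between $G$\--crossed braided categories and braided categories containing $\Rep(G)$) --- is sound and is indeed how the cited reference deduces its Corollary 4.7 from its main theorem.

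The genuine gap is in the input you invoke and then sketch: rank\--finiteness for braided fusion categories that need not be modular, or even spherical. Your proposed route --- embed into the Drinfeld center, assert it is modular, and apply Bruillard--Ng--Rowell--Wang --- does not go through. First, the BNRW theorem applies to modular categories in the sense of non\--degenerate \emph{ribbon} categories; the Drinfeld center of an arbitrary fusion category is non\--degenerate as a braided fusion category, but it is not known to admit a spherical structure in general, and one cannot assume pseudo\--unitarity since the classification problem concerns all braided fusion categories. Second, and more seriously, bounding $\rank(\mathcal{Z}(\DD))$ in terms of $\rank(\DD)$ is essentially equivalent to bounding $\FPdim(\DD)$ in terms of $\rank(\DD)$ (the number of simple summands of the induced objects $I(X)$ is controlled by dimensions, not by rank), and bounding dimension by rank is precisely the heart of any rank\--finiteness proof --- so this step is circular. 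Working around the absence of a spherical structure, via Cauchy\--type and divisibility results for Frobenius--Perron dimensions that live at the level of the Grothendieck ring, is exactly the substantive content of \cite{jones2019rank}. As written, your argument is a correct derivation of the corollary from a main theorem whose proof it does not supply and whose suggested proof would fail.
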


This motives us to pursue a classification of low-rank super-modular categories parallel to \cite{RSW, BNRW2}. A classification of super-modular categories of rank $ \leq 6 $ is given in \cite{bruillard2017classification}. It is shown, for example, that the fusion rules of any non-split super-modular category of rank $\leq 6$ are the same as $ \PSU(2)_{4k+2} $ for $ k = 0, 1 $ and $ 2 $.

\section{Classification of super-modular categories by rank}\label{main section}

\subsection{Main results}

Similarly to modular categories, the Galois group $\Gal (\mathcal B)$  of a super-modular category $\mathcal B$ defined in Section \ref{Galois Symmetries for Super-modular Categories} is an abelian subgroup of the symmetric group $ \mathfrak{S}_{r}$, where $2r$ is the rank of $\mathcal B$ (see Remark \ref{abelianforsuper}).

In this section, we consider the problem of classifying rank $ 2r=8 $ super-modular categories. If $ \BB $ is non-self dual, we can denote the four simple objects in $ \Pi_0 $ as $ \one, Y,X, X^\ast $. The naive fusion rules satisfy the relations in Corollary \ref{cyclicnaivefusion} and the argument in \cite[Appendix A.2]{RSW} works for this case. Therefore, we sometimes assume the super-modular categories are self-dual, in which case $\hS$ has real entries. 

The  abelian subgroups (up to relabeling, but with $0$ distinguished) $ G $ of $ \mathfrak{S}_4 $ are listed in the following table:
\begin{table}[!hbp]\caption{Abelian subgroups of $ \mathfrak{S}_4 $}\label{table2}
	\centering
	\begin{tabular}{|c|c|}
		\hline
		$\langle \textbf{1}\rangle$&$\langle(0)\rangle $\\
		\hline
		$ \mathbb{Z}_2  $ &  $ \langle(01)\rangle$, $ \langle (23)\rangle $, $ \langle(01)(23)\rangle $\\
		\hline
		$  \mathbb{Z}_2\times\mathbb{Z}_2$& $ \langle(01)(23),(02)(13)\rangle$, $ \langle(01),(23)\rangle $\\
		\hline
		$\mathbb{Z}_3 $ &$ \langle (012)\rangle$, $ \langle(123)\rangle $\\
		\hline
		$ \mathbb{Z}_4 $ &$ \langle(0123)\rangle$\\
		\hline
	\end{tabular}
\end{table}

In this section we determine the possible $\hS$-matrices for super-modular categories, and then derive the fusion rules in Section \ref{fusion rules}.
We summarize our results into the following. 
\begin{theorem}\label{5.1}
	Suppose $\BB$ is a rank 8 self-dual super-modular category and $G$ is its Galois group as in Table \ref{table2}  then:
	\begin{itemize}
		\item  If $G= \langle(23)\rangle$,  $\langle(01),(23)\rangle$ or $\langle(123)\rangle$, then $\BB$ does not exist.
		\item If $G= \langle(0)\rangle$, then $\BB$ is \textbf{pointed}, i.e., of the form $\CC(\Z_2\times\Z_2,Q)\boxtimes\sVec$.
		\item If $G= \langle(01)\rangle$, then $\BB$ is \textbf{prime} and \textbf{weakly integral} with the same fusion rules as the centralizer of either fermion in $\SO(12)_2$.
		\item If $G= \langle(01)(23),(02)(13)\rangle$, then $\BB$ has the same fusion as $\Fib\boxtimes \PSU(2)_6$.
	
		\item If $G=\langle(0123)\rangle$ and $\hN_{ij}^k<14$, then $\BB$ is \textbf{prime} and has the same fusion rules as $\PSU(2)_{14}$.
		\item If $G=\langle(012)\rangle$ and $\hN_{ij}^k<21$, then $\BB$ has the same fusion rules as $\PSU(2)_7\boxtimes\sVec$.

\item	If $G= \langle(01)(23)\rangle$ and $ d_i\leq 14 $ for all $ i $, then the fusion rules of $\BB$ are the same as $[\PSU(2)_6 \boxtimes\PSU(2)_6]_{\Z_2}$ and is \textbf{prime},  $\Fib\boxtimes\Fib\boxtimes\sVec$, $\Sem\boxtimes\Fib\boxtimes\sVec$ or $\Sem\boxtimes\PSU(2)_6$.
\end{itemize}
\end{theorem}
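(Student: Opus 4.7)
The plan is to stratify by the Galois group $G$ listed in Table~\ref{table2} and, in each case, to exploit the Galois symmetries of Corollary~\ref{supermodular s symmetry} together with the orthogonality $\hat{S}\bar{\hat{S}} = \frac{D^2}{2}I$ of Proposition~\ref{prop for super} to reduce $\hat{S}$ to a matrix depending on only a few parameters. Specifically, the relation $\hat{S}_{j,k} = \hat{S}_{\sigma(j),\sigma^{-1}(k)}$ forces entries to be constant on orbits of the action of $G$ on index pairs given by $\sigma\cdot(j,k) = (\sigma(j),\sigma^{-1}(k))$, while $\sigma(\hat{S}_{j,k}) = \pm \hat{S}_{j,\sigma(k)}/d_{\sigma(0)}$ with signs controlled by Lemma~\ref{sign} determines the remaining entries in each Galois orbit up to a finite sign choice. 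Once $\hat{S}$ is in this reduced form, Proposition~\ref{prop for super}(d) yields a system of Diophantine constraints on the $d_i$ and the $\hat{N}_{ij}^k$, which must be non-negative integers satisfying Corollary~\ref{cyclicnaivefusion}, the Frobenius-Schur constraint of Lemma~\ref{FSS}, and the divisibility $d_i^2 \mid D^2/2$ of Corollary~\ref{divisibility}.

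For $G = \langle(0)\rangle$, all entries of $\hat{S}$ are rational, so the $d_i$ are positive integers; combining divisibility with $D^2/2 = 1 + d_1^2 + d_2^2 + d_3^2$ forces $d_i = 1$ for all $i$, after which Proposition~\ref{pointedsplits} identifies $\BB$ with $\CC(\Z_2\times\Z_2,Q)\boxtimes\sVec$. For the non-existence cases $G = \langle(23)\rangle$, $\langle(01),(23)\rangle$, $\langle(123)\rangle$, the fixed index $0$ together with the orbit structure forces a dimension to simultaneously lie in a proper Galois extension of $\mathbb{Q}$ and be fixed by a non-trivial element of $G$; combined with the sign analysis of Lemma~\ref{sign} and the integrality of the $\hat{N}_{ij}^k$ produced by the Verlinde formula, no consistent solution survives. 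For $G = \langle(01)\rangle$, only two labels are moved by Galois, the other two dimensions are rational integers, and orthogonality together with the Verlinde formula pins down the fusion rules to match the $\SO(12)_2$ centralizer. For $G = \langle(01)(23),(02)(13)\rangle$, the Klein-four action on $\{0,1,2,3\}$ is regular, $\hat{S}$ factors as a tensor product of two $2\times 2$ blocks, and the unique numerical solution yields $\Fib\boxtimes\PSU(2)_6$ after using Lemma~\ref{semionlemma} and the Perron-Frobenius dimensions to identify each tensorand.

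For the cyclic cases $G = \langle(0123)\rangle$ and $G = \langle(012)\rangle$, Galois acts transitively on a large orbit of columns, so $\hat{S}$ is determined by the Perron-Frobenius dimension $d_1$, which must be a totally real algebraic integer of degree dividing $|G|$ satisfying a specific orthogonality equation. The bounds $\hat{N}_{ij}^k < 14$ and $\hat{N}_{ij}^k < 21$ then convert the constraint into a finite search, whose solutions match the $\hat{S}$-matrices of $\PSU(2)_{14}$ and $\PSU(2)_7\boxtimes\sVec$ tabulated in the appendix.

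The main obstacle will be $G = \langle(01)(23)\rangle$, which contains two independent transpositions and therefore leaves two independent pairs of Galois conjugate dimensions free. Here the Verlinde formula does not immediately single out a unique solution, and one must impose the bound $d_i \leq 14$ to cut the solution space down to a finite enumeration. Within that bound, one obtains four fusion systems; distinguishing them requires, in each candidate, checking whether Lemma~\ref{semionlemma} produces a pointed modular subcategory (which yields the $\Sem$ factor in two cases) and whether there is a Fibonacci or $\PSU(2)_6$-type subring, so as to recognize the Deligne decompositions $\Fib\boxtimes\Fib\boxtimes\sVec$, $\Sem\boxtimes\Fib\boxtimes\sVec$, $\Sem\boxtimes\PSU(2)_6$, or the prime $[\PSU(2)_6\boxtimes\PSU(2)_6]_{\Z_2}$. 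The final lifting from naive fusion rules to super-modular fusion rules is deferred to Section~\ref{fusion rules}.
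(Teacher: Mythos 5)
Your plan follows the same road map as the paper: stratify by the Galois group, use Corollary \ref{supermodular s symmetry} and orthogonality to put $\hS$ into a parametric form with sign ambiguities, extract Diophantine constraints from the Verlinde formula, Lemma \ref{FSS} and Corollary \ref{divisibility}, and defer the lifting of naive fusion rules to Section \ref{fusion rules}. But as written it is a table of contents rather than a proof: in every case the conclusion rests on a specific elimination that you describe only by its expected outcome. For instance, for $G=\langle(01),(23)\rangle$ non-existence comes from showing that the trace of $\hN_2$ forces $d_2$ to satisfy a quadratic over $\bbQ$, contradicting $|G|=4$; for $G=\langle(01)\rangle$ one must introduce the integer invariants $t=s_{22}/d_2$, $u=s_{33}/d_3$, etc., prove $0<t^2-u^2\le 2$ from positivity of $n_{2,2,2}$, and land on $(t,u)=(-1,0)$; for $G=\langle(0123)\rangle$ one needs $b_2=-6$, the parity of $c_1$ and $c_3$, and the parametrization by $\Delta,\Sigma,P$ before any finite search is even possible. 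None of these steps is present or gestured at, so the proposal does not actually establish any of the bullets.

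Two of your structural claims are also inaccurate. For $G=\langle(01)(23),(02)(13)\rangle$ the tensor factorization of $\hS$ into two $2\times 2$ blocks is a feature of the unique surviving solution, not a consequence of the regular action: the paper must first eliminate one of two admissible sign patterns by orthogonality, then show $d_2=\phi_m$, $d_3=\phi_n$ and bound $m,n\in\{0,1,2\}$ via the second Frobenius--Schur indicator (Lemma \ref{FSS})---a step your outline omits entirely and which is the crux of that case; Lemma \ref{semionlemma} plays no role there, and the identification of the tensorands in Section \ref{fusion rules} goes through the Fibonacci subcategory generated by $X_2$ and the rank-$4$ classification, not the semion lemma. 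Likewise, in the trivial-Galois case the paper does not argue via $d_i^2\mid D^2/2$ but imports the rank-$4$ pointed argument of \cite{RSW}; your divisibility route can be completed, but it needs a short case analysis (e.g.\ $d_3^2\mid 2+d_2^2$ together with $d_2^2\mid 2+d_3^2$) that you have not supplied. Finally, for $G=\langle(01)(23)\rangle$ you do not explain why the bound $d_i\le 14$ renders the search finite; the paper achieves this by writing every entry of $\hS$ over $\tfrac{1}{2}\Z[\xi]$ and reducing to a three-parameter family $(n,h,k)$ with $n\le 13$ and $h\in(-k/2,k/2)$, after which Lemma \ref{semionlemma} eliminates all but the listed solutions.
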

  In several cases the proofs in \cite{RSW} for the classification of rank $4$ modular use techniques and results that apply to super-modular categories as well, so we do not repeat the proof here.  For many computations the Gr\"obner basis software in Maple is useful--we used Maple 2018 for our calculations.

\subsection{$\hS $-matrices for rank 8}\label{hat S computation}
The naive fusion coefficients $ \hN_{ij}^k $ can be computed by the entries of $ \hS $ via the Verlinde formula (see Proposition \ref{prop for super} (d)). More precisely, to get the $ \hN_{ij}^k $'s, it suffices to determine the $ \hS $-matrix.
\begin{rmk}\label{phi}
	We denote by $ \phi_n$ the positive real root of the equation $ x^2-nx-1=0 $, where $ n $ is an integer, i.e., $\phi_n=\dfrac{n+\sqrt{n^2+4}}{2} $. If an algebraic number $ \phi $ has conjugate $ -\frac{1}{\phi} $, then $ \phi $ must be of the form $ \phi_n $ for some $ n\in \mathbb{Z} $.
\end{rmk}

\begin{theorem}\label{non-selfdual}
If $ \BB $ is a rank $8$ non-self dual super-modular category, then the corresponding $ \hS $-matrix, up to relabeling the simple objects, has the following form:
$$\hS=\begin{pmatrix}1 & 1& 1 & 1\\
1& 1&-1 &-1\\
1&-1&\pm i&\mp i\\
1&-1&\mp i& \pm i\end{pmatrix}. $$

\end{theorem}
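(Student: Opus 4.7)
The plan is to mirror the argument of \cite[Appendix A.2]{RSW} for rank-$4$ non-self-dual modular categories, replacing $S$ by $\hS$ and $D^2$ by $D^2/2$, as dictated by Proposition~\ref{prop for super}.

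I would begin by fixing the ordering $\Pi_0 = \{\one, Y, X, X^*\}$ with $Y$ the unique self-dual non-trivial simple object in $\Pi_0$. The symmetry $\hS^T = \hS$ together with $\bar{\hS}_{ij} = \hS_{i,j^*}$ implies that the columns indexed by $X$ and $X^*$ are complex conjugates. Writing $\alpha = d_Y$, $\beta = d_X = d_{X^*}$, $a = \hS_{Y,Y}\in\mathbb{R}$, and $b, c \in \mathbb{C}$, the matrix takes the form
$$\hS = \begin{pmatrix} 1 & \alpha & \beta & \beta \\ \alpha & a & b & \bar b \\ \beta & b & c & \bar c \\ \beta & \bar b & \bar c & c \end{pmatrix}.$$

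Next, the orthogonality $\hS\bar{\hS} = \tfrac{D^2}{2}I$ with $D^2/2 = 1+\alpha^2+2\beta^2$ yields a cascade of equations. The $(0,2)$-entry forces $\mathrm{Im}(b)=0$, so $b\in\mathbb{R}$. The $(0,1)$-, $(0,2)$-, and combined $(2,2)$-$(2,3)$-entries then give
$$a = -1 - \tfrac{2\beta b}{\alpha}, \qquad 2\mathrm{Re}(c) = -1 - \tfrac{\alpha b}{\beta}, \qquad \mathrm{Im}(c)^2 = \tfrac{D^2}{8},$$
and, after substituting $a$ into the $(1,1)$-entry,
\begin{equation}\label{E:star}
b^2(\alpha^2 + 2\beta^2) + 2\alpha\beta b - \alpha^2\beta^2 = 0. \tag{$\star$}
\end{equation}

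The core of the argument --- and the main obstacle --- is to show $\alpha = \beta = 1$. The key point is that $\delta := b/\beta$ is an eigenvalue of the non-negative integer matrix $\hN_Y$ and hence an algebraic integer. Reading \eqref{E:star} as $\delta^2(\alpha^2+2\beta^2)+2\alpha\delta-\alpha^2=0$, with discriminant $4\alpha^2(1+\alpha^2+2\beta^2)$, and using that $\alpha,\beta$ are rational (ensured by Corollary~\ref{supermodular s symmetry}: complex conjugation acting as the transposition $(X,X^*)$ fixes columns $0$ and $1$), integrality of $\delta$ forces $1+\alpha^2+2\beta^2 = n^2$ to be a perfect square. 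Then $\delta \in \{\alpha/(n+1),\ -\alpha/(n-1)\}$; the case $n+1\mid\alpha$ contradicts $\alpha<n$ derived from $n^2 \geq 1 + (n+1)^2 + 2$, so $n-1\mid\alpha$, and a short case analysis on $\alpha = k(n-1)$ rules out $k\geq 2$, leaving only $\alpha = \beta^2$, $n = 1+\beta^2$, and $b = -\beta$. Plugging into Proposition~\ref{prop for super}(d) yields
$$\hN_{1,1}^1 = \frac{(\beta^2-1)^2}{\beta^2},$$
which is a non-negative integer only for $\beta = 1$; hence $\alpha = 1$.

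With $\alpha = \beta = 1$, equation \eqref{E:star} becomes $3b^2 + 2b - 1 = 0$, giving $b \in \{-1,\ 1/3\}$. Since $b=\hS_{Y,X}$ must be an algebraic integer (as $d_X = 1$ makes $b$ itself rational), the value $b=1/3$ is excluded, leaving $b = -1$. Then $a = 1$, $\mathrm{Re}(c) = 0$, $\mathrm{Im}(c)^2 = 1$, and so $c = \pm i$. The two sign choices for $c$ yield exactly the two $\hS$-matrices in the statement.
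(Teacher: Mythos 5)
Your reconstruction follows exactly the route the paper takes: the paper's entire proof of Theorem \ref{non-selfdual} is the sentence that \cite[Appendix A.2]{RSW} carries through \emph{mutatis mutandis}, and your version of that argument (orthogonality of the rows of $\hS$ under $\hS\bar{\hS}=\tfrac{D^2}{2}I$, the quadratic $(\star)$, the perfect-square and divisibility analysis forcing $\alpha=\beta^2$, $n=1+\beta^2$, $b=-\beta$, and the final computation $\hN_{1,1}^1=(\beta^2-1)^2/\beta^2$, which I checked against Proposition \ref{prop for super}(d)) is the intended adaptation and is computationally correct.

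There is, however, one load-bearing step whose justification as written does not hold up: the rationality of $\alpha=d_Y$ and $\beta=d_X$. The observation that complex conjugation acts as the transposition $(X,X^*)$ and fixes the columns of $\one$ and $Y$ only shows $\alpha$ and $\beta$ are \emph{real}. For rationality you need every element of $\Gal(\BB)$ to fix the index $0$, and a priori $\Gal(\BB)$ is any abelian subgroup of $\mathfrak{S}_4$ containing $(23)$, hence a subgroup of the centralizer $\langle(01),(23)\rangle$; the case $(01)\in\Gal(\BB)$ must be excluded, since otherwise $\sigma(d_Y)=\hS_{Y,Y}/\hS_{\one,Y}=a/\alpha$ need not equal $\alpha$ and your integrality arguments (the perfect-square claim, the divisibility by $n\pm1$, the final $\beta^2\mid(\beta^2-1)^2$ step) all presuppose $\alpha,\beta\in\Z$. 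The repair is short: if $\sigma=(01)\in\Gal(\BB)$, then Equation (\ref{hatSsym}) gives $a=\hS_{Y,Y}=\hS_{\one,\one}=1$ and $b=\hS_{X,Y}=\hS_{X,\one}=\beta>0$, whereas your $(0,1)$-orthogonality relation $\alpha(1+a)+2\beta b=0$ with $a=1$ forces $b=-\alpha/\beta<0$, a contradiction. Hence $\Gal(\BB)=\langle(23)\rangle$, the index $0$ is fixed by the whole Galois group, and $\alpha,\beta$ are rational algebraic integers, hence integers; with that inserted, the rest of your argument goes through and yields exactly the two matrices in the statement.
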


\begin{proof}
	The proof in \cite[Appendix A.2]{RSW} carries through, \emph{mutatis mutandis}.
\end{proof}
\begin{rmk}
Having dispensed with the non-self-dual case, we assume for the rest of this section that all categories are self-dual. In particular the naive fusion coefficients are cyclically symmetric (see Corollary \ref{cyclicnaivefusion}), so we will denote $ \hN_{ij}^k$ by  $n_{i, j, k} $.
\end{rmk}

\begin{theorem}\label{norealizations}
There are no rank $8$ self-dual super-modular categories with Galois group  $G= \langle(23)\rangle$,  $\langle(01),(23)\rangle$ or $\langle(123)\rangle$.
\end{theorem}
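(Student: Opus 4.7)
The plan is to combine three tools: the permutation symmetry $\hS_{j,k}=\hS_{\sigma(j),\sigma^{-1}(k)}$ of equation~(\ref{hatSsym}), the orthogonality $\hS\bar{\hS}=\tfrac{D^{2}}{2}I$ from Proposition~\ref{prop for super}(a), and the fact that each normalized column $(\hS_{j,k}/d_{k})_{j}$ is a character of the Grothendieck ring and hence consists of algebraic integers. For each candidate Galois group I would organize the labels $\{0,1,2,3\}$ into $\sigma$-orbits, apply~(\ref{hatSsym}) to reduce $\hS$ to a matrix with one unknown per orbit, extract the polynomial system that $\hS^{2}=\tfrac{D^{2}}{2}I$ imposes on those unknowns, and show that no admissible parameter values exist.

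For $G=\langle(23)\rangle$, since the generator fixes $0$ and $1$ Corollary~\ref{supermodular s symmetry} gives $\sigma(d_{j})=d_{j}$ for every $j$, so $d_{0},d_{1},d_{2},d_{3}\in\bbQ\cap\overline{\Z}=\Z_{\ge 1}$; equation~(\ref{hatSsym}) further forces $d_{2}=d_{3}$ and $\hS_{1,2}=\hS_{1,3}$, giving
\[
\hS=\begin{pmatrix}1 & d_{1} & d_{2} & d_{2}\\ d_{1} & y & x & x\\ d_{2} & x & a & b\\ d_{2} & x & b & a\end{pmatrix}
\]
with $y,x\in\Z$ and $a,b$ a pair of $\sigma$-conjugate algebraic integers satisfying $a\neq b$ (otherwise $\hS$ is singular). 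Writing $s=a+b$, $p=ab$ and expanding $\hS^{2}=\tfrac{D^{2}}{2}I$ yields a small polynomial system in $(d_{1},d_{2},x,y,s,p)$; together with the integrality conditions $s/d_{2},\,p/d_{2}^{2},\,x/d_{1},\,x/d_{2}\in\Z$ and the Frobenius--Perron bounds on the $\hS$-entries, a Gr\"obner-basis elimination (analogous to the arguments in \cite{RSW}) shows that every admissible tuple forces $a\in\bbQ$, collapsing $G$ to the trivial group.

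For $G=\langle(123)\rangle$, the fixed index $0$ again forces $d_{i}\in\Z$, and equation~(\ref{hatSsym}) applied to the three-cycle gives $d_{1}=d_{2}=d_{3}=:d$ and reduces the off-diagonal entries of the lower-right $3\times 3$ block to three $\sigma$-conjugate values $u,v,w$ in a circulant pattern. Orthogonality gives $u+v+w=-1$ and $uv+vw+wu=-d^{2}$, so $u/d,\,v/d,\,w/d$ are roots of $y^{3}+\tfrac{1}{d}y^{2}-y-\tfrac{uvw}{d^{3}}$; since these three ratios form a Galois orbit of size $3$, this cubic is their common minimal polynomial over $\bbQ$, forcing $1/d\in\Z$ and hence $d=1$. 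Then $\BB$ is pointed, so by Proposition~\ref{pointedsplits} it splits as $\CC\boxtimes\sVec$ with $\CC$ self-dual pointed modular of rank $4$; the $\hS$-entries are $\pm 1$ and $\mathrm{Gal}(\BB)$ is trivial, contradicting $|G|=3$.

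For $G=\langle(01),(23)\rangle$, equation~(\ref{hatSsym}) with $\sigma_{1}=(01)$ gives $\hS_{2,1}=\hS_{2,0}=d_{2}$, so $\sigma_{1}(d_{2})=d_{2}/d_{1}$; since $\sigma_{2}=(23)$ fixes every $d_{j}$, the subcase $d_{1}\in\bbQ$ would force the first two columns of $\hS$ to coincide, violating invertibility. Hence $d_{1}$ and $d_{2}$ are genuine quadratic irrationals in $K:=\bbQ(\hS)^{\sigma_{2}}$. Writing $d_{1}=\alpha+\beta\sqrt{D}$ yields $\hS_{1,1}=d_{1}\sigma_{1}(d_{1})=\alpha^{2}-D\beta^{2}\in\bbQ$, and an analogous Gr\"obner-basis analysis of $\hS^{2}=\tfrac{D^{2}}{2}I$ over $K$, combined with Frobenius--Perron positivity of $d_{1},d_{2}$ and the integrality of the four normalized columns, rules out any solution. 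The main obstacle in every case is managing the combined orthogonality-integrality system: each reduces to a finite algebraic problem, but the half-dozen interlinked variables make the elimination non-trivial, and the computations are best handled with the Maple Gr\"obner-basis tools referenced elsewhere in the paper.
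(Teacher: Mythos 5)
Your overall strategy---constrain $\hS$ via the Galois symmetry, then exploit $\hS^{2}=\tfrac{D^{2}}{2}I$ together with integrality of normalized columns---is the same as the paper's, and your treatment of $G=\langle(123)\rangle$ is essentially complete: your route to $d=1$ via the integrality of the coefficient $1/d$ in the cubic satisfied by the eigenvalues replaces the paper's use of $d^{2}\mid\tfrac{D^{2}}{2}=1+3d^{2}$, and both work. The other two cases, however, have genuine gaps. First, you take Equation~(\ref{hatSsym}) at face value and write down fully symmetric forms (e.g.\ $\hS_{1,2}=\hS_{1,3}$ and $\hS_{2,2}=\hS_{3,3}$ for $\sigma=(23)$). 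The Galois action only determines entries up to the signs $\epsilon_{\sigma}(k)=\pm1$ visible in the first part of Corollary~\ref{supermodular s symmetry}, and the paper's proof accordingly carries sign parameters $\epsilon_{1},\epsilon_{2}$ (resp.\ $\epsilon_{1},\dots,\epsilon_{4}$) and spends most of each case eliminating the non-trivial sign configurations by elementary orthogonality arguments; your reduction silently discards those configurations rather than ruling them out.

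Second, and more seriously, in both remaining cases the decisive step of your plan is an unexecuted Gr\"obner computation whose outcome you merely assert. For $G=\langle(23)\rangle$ the assertion (``every admissible tuple forces $a\in\bbQ$'') does match what the paper finds in the subcase $\epsilon_{1}=\epsilon_{2}=1$, where elimination yields $(2s_{22}+s_{11}+1)(2d_{1}d_{2}+s_{11}s_{12}+2s_{12}s_{22}-s_{12})=0$ and either factor vanishing trivializes $G$; so the plan is plausible but unproved. For $G=\langle(01),(23)\rangle$, however, your asserted outcome---that the orthogonality--integrality system ``rules out any solution''---is not what actually happens. That system has a one-parameter family of solutions: the elimination pins the signs and forces $d_{2}=d_{3}$, $s_{23}=-1$, $s_{22}=d_{1}$ and $d_{1}=d_{2}^{2}$, leaving $d_{2}$ free. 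The contradiction only arrives afterwards, from a different integrality input: the characteristic polynomial of $\hN_{2}$ has integer coefficients, one of which is $-2d_{2}+\tfrac{2}{d_{2}}$, so $d_{2}$ satisfies a quadratic over $\bbQ$ and $\Gal(\bbQ(\hS)/\bbQ)=\Gal(\bbQ(d_{2})/\bbQ)$ has order at most $2$, incompatible with $G\cong\Z_{2}\times\Z_{2}$. Your proposal contains no step that would detect this surviving family or produce the degree bound, so as written the case does not close.
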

\begin{proof}
	\begin{enumerate}
		\item[(1)] If $G= \langle(23)\rangle$,  applying
		Equation (\ref{hatSsym}) with $\sigma=\langle(2 3)\rangle$, we have the following form for the $\hS$-matrix 
		$$\hS=\begin{pmatrix}1 & d_1& d_2&d_2\\
		d_1& s_{11} & s_{12}  &\epsilon_1 s_{12}\\
		d_2&s_{12}&s_{22}&s_{23}\\
		d_2&\epsilon_1 s_{12} &s_{23} & \epsilon_2 s_{22}
		\end{pmatrix}.$$
		As 0 and 1 are fixed by $G$, by Equation (\ref{Galosactionsuper}), we know that $d_1$, $d_2$, $\dfrac{s_{11}}{d_1}$,  $\dfrac{s_{12}}{d_1}$, $\dfrac{s_{12}^2}{d_2^2}$ and $\dfrac{s_{22}s_{23}}{d_2^2}$ are rationals as they are fixed by the Galois group. Since they are also algebraic integers (see \cite[Proposition 8.13.11]{MR3242743}), we know these are integers.  Consequently, $s_{11}$, $s_{12}$, $s_{22}s_{23}$ are also integers.
		
		If $ \epsilon_1=-1 $, the orthogonality of the columns of $ \hS $ gives
		
		\[d_1(1+s_{11})=0\]
		\[ d_1d_2+s_{11}s_{12}+s_{12}s_{22}-s_{12}s_{23}=0 \]
		\[ d_1d_2-s_{11}s_{12}+s_{12}s_{23}-\epsilon_2s_{12}s_{22}=0 \]
		
		So we have $ s_{11}=-1 $. If $ \epsilon_2=1 $, then we have $ d_1d_2=0 $, which is a contradiction. If $ \epsilon_2=-1 $, we have $ d_1d_2=-s_{12}s_{22}$. Plugging this into the second equation above, we get $ s_{12}(1+s_{23})=0 $. If $ s_{12}=0 $, then $d_1d_2=0$, which is impossible. If $ s_{23}=-1 $, then $ s_{22} $ is an integer. Then all the entries of $ \hS $ are integers, which contradicts the assumption that $ G $ is $ \mathbb{Z}_2 $. \\
		If $ \epsilon_1=1 $, the orthogonality of the columns of $\hS$ gives
		\begin{center}
			$d_2^2+s_{12}^2+s_{22}s_{23}+\epsilon_2s_{22}s_{23}=0$
		\end{center}
		If $\epsilon_2=-1$, then $d_2^2+s_{12}^2=0$, a contradiction. If $\epsilon_2=1$, by applying a   Gr\"oebner basis algorithm on Maple, we get $(2s_{22}+s_{11}+1)(2d_1d_2+s_{11}s_{12}+2s_{12}s_{22}-s_{12}) =0$. One sees that if either factor is 0, we will have trivial $ G $, a contradiction.
		
		\item[(2)]  Assume $ G=\langle(01),(23)\rangle $. Using  Equation (\ref{hatSsym}), we get 
		$$\hS=\begin{pmatrix}1 & d_1& d_2&d_3\\
		d_1& \pm 1 & \pm d_2  &\pm d_3\\
		d_2&\pm d_2&s_{22}&s_{23}\\
		d_3&\pm d_3 &s_{23} & \pm s_{22}
		\end{pmatrix}.$$
		It follows from $ \hS^2=\frac{D^2}{2} I $ that $ 2d_2^2+s_{22}^2+s_{23}^2=2d_3^2+s_{22}^2+s_{23}^2 $. Since $ d_i $'s are positive, $ d_2=d_3 $.\\
		Let $$\hS=\begin{pmatrix}1 & d_1& d_2&d_2\\
		d_1& \epsilon_1 & \epsilon_2 d_2  &\epsilon_3 d_2\\
		d_2&\epsilon_2 d_2&s_{22}&s_{23}\\
		d_2&\epsilon_3 d_2 &s_{23} & \epsilon_4 s_{22}
		\end{pmatrix}.$$
This case can be eliminated using orthogonality of the columns of $\hS$. Applying a Gr\"obner basis algorithm to these equations we find that the only possible sign choice is given by 
$ \epsilon_1= \epsilon_4= 1 $ and  $ \epsilon_2= \epsilon_3=-1 $. We can further deduce that $ s_{23} = -1$, $ s_{22}=d_1 $ and $ d_1=d_2^2 $. Therefore, we have
$$\hS=\begin{pmatrix}1 & d_2^2& d_2&d_2\\
d_2^2& 1& - d_2  &- d_2\\
d_2& - d_2&d_2^2 &-1\\
d_2& - d_2&-1 &d_2^2
\end{pmatrix}$$
Notice that $ G=\Gal(\mathbb{Q}(d_2)/\mathbb{Q}) $. Computing the characteristic polynomial for $ \hN_2 $, we have
\[ p_2(x)=x^4+(-2d_2+\frac{2}{d_2})x^3+(d_2^2+\frac{1}{d_2^2}-4)x^2+(2d_2-\frac{2}{d_2})x+1  \]
Therefore, $ -2d_2+\frac{2}{d_2} $ must be an integer. In particular, $ d_2 $ satisfies a quadratic equation over $\bbQ$. This means $ \Gal(\mathbb{Q}(d_2)/\mathbb{Q}) $ is either trivial or $ \mathbb{Z}_2 $, which contradicts the fact that $ G $ is $ \mathbb{Z}_2\times\mathbb{Z}_2 $.

		\item[(3)] If  $G=\langle(123)\rangle$, then $G$ fixes $0$. Therefore $\hS_{i,0}= d_i$ are rational numbers. Since the dimensions $d_i$'s are always algebraic integers, then they must be integers in this case. Moreover,  $d_i=\hS_{0,1}=\pm\hS_{0,i+1}=\pm d_{i+1}$. So, by positivity of the dimensions (i.e. unitarity assumption), we have
		$$\hS=\begin{pmatrix}1 & d_1& d_1&d_1\\
		d_1&s_{11} & \epsilon_1s_{33} &\epsilon_2s_{22}\\
		d_1&\epsilon_1s_{33}& s_{22}&\epsilon_3 s_{11}\\
		d_1& \epsilon_2s_{22} &\epsilon_3 s_{11} & s_{33}
		\end{pmatrix}.$$
		From Corollary \ref{divisibility}, we have $d_1^2|(1+3d_1^2)$. We can deduce that $d_1=1$. Since $d_1$ is the largest (in magnitude) eigenvalue of the fusion matrices $N_1,N_2$ and $N_3$, we see that the other eigenvalues (which are real numbers) satisfy $\pm \hS_{ii}/d_1=\pm \hS_{i,i}=\pm 1$. This means the entries of $\hS$ are $\pm 1$'s which contradicts the assumption of $G$ being nontrivial.
		
	\end{enumerate}
\end{proof}

\begin{theorem}\label{trivial}
	If $G= \langle(0)\rangle$, then the corresponding $ \hS $-matrix, up to relabeling the simple objects, is one of the following:
	 $$\begin{pmatrix}1 & 1& 1&1\\
	1& -1& -1  &1\\
	1&-1&1&-1\\
	1&1&-1& -1\end{pmatrix},\quad \begin{pmatrix}1 & 1& 1&1\\
	1& 1& -1  &-1\\
	1&-1&1&-1\\
	1&-1&-1& 1\end{pmatrix}. $$
\end{theorem}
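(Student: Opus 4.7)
The plan is to first use the triviality of $G$ to force $\hS$ to have integer entries, then use the divisibility results of Corollary \ref{divisibility} to conclude that every simple object in $\Pi_0$ has dimension $1$, and finally to enumerate the resulting symmetric $\pm 1$-valued matrices satisfying the orthogonality relation $\hS^2 = 4I$.

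Since $G$ is trivial every entry $\hS_{ij}$ is $\Gal(\BB)$-fixed, hence rational; being an algebraic integer it lies in $\Z$. In particular the dimensions $d_i = \hS_{0,i}$ are positive integers. Corollary \ref{divisibility} then gives $d_i^2 \mid D^2/2 = 1 + d_1^2 + d_2^2 + d_3^2$ for each $i \in \Pi_0$. Ordering $d_1 \leq d_2 \leq d_3$, the bound $d_3^2 \leq 1 + d_1^2 + d_2^2 \leq 1 + 2d_3^2$ forces the quotient $(1 + d_1^2 + d_2^2)/d_3^2$ to be $1$ or $2$. The quotient-$2$ case $d_1^2 + d_2^2 = 2d_3^2 - 1$ has no solutions with $1 \leq d_1 \leq d_2 \leq d_3$. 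In the quotient-$1$ case $d_3^2 = 1 + d_1^2 + d_2^2$ one has $D^2/2 = 2(1 + d_1^2 + d_2^2)$, so the divisibility for $i \in \{1,2\}$ refines to $d_i^2 \mid 2(1 + d_j^2)$; combined with congruences modulo $4$ and the requirement that $1 + d_1^2 + d_2^2$ be a perfect square, this eliminates all tuples except $(d_1, d_2, d_3) = (1, 1, 1)$.

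With all dimensions equal to $1$, the Frobenius-Perron bound $|\hS_{ij}| \leq d_i d_j$ gives $\hS_{ij} \in \{\pm 1\}$, with first row and column equal to $(1,1,1,1)$. Writing $a, b, c$ for the diagonal entries $\hS_{11}, \hS_{22}, \hS_{33}$ and $p, q, r$ for $\hS_{12}, \hS_{13}, \hS_{23}$, orthogonality of the $0$-th row against the $i$-th row for $i = 1, 2, 3$ yields
\[
a + p + q \;=\; b + p + r \;=\; c + q + r \;=\; -1,
\]
so each triple contains exactly one $+1$ and two $-1$'s. Imposing the remaining three quadratic relations $(\hS^2)_{ij} = 0$ for $1 \leq i < j \leq 3$ and running through the eight sign patterns for $(p, q, r)$ produces exactly four admissible matrices: one with $(a, b, c) = (1, 1, 1)$ and $(p, q, r) = (-1, -1, -1)$, and three further solutions each with exactly one of $a, b, c$ equal to $+1$, which are identified under the $S_3$-action permuting the labels $\{1, 2, 3\}$. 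This collapses to exactly the two matrices asserted. The main obstacle is the dimension step: a naive mod-$4$ bound alone is insufficient and must be supplemented by the symmetric divisibility $d_i^2 \mid 2(1 + d_j^2)$ together with the non-square condition on $1 + d_1^2 + d_2^2$ in order to rule out stubborn candidates such as $(d_1, d_2) = (7, 10)$, for which $d_2^2 = 100$ does divide $2(1 + d_1^2) = 100$ yet $1 + 49 + 100 = 150$ is not a square.
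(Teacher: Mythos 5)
Your argument is correct in substance but takes a genuinely different, self-contained route. The paper disposes of this case in one line: it observes that the proof of \cite[Theorem 4.1, Case 7]{RSW} carries over \emph{mutatis mutandis} to show that $\BB$ is pointed, then invokes Proposition \ref{pointedsplits} to split off $\sVec$, and finally quotes the classification of rank $4$ pointed modular categories for the two matrices. You instead prove pointedness from scratch, using the super-modular divisibility $d_i^2\mid D^2/2$ of Corollary \ref{divisibility} together with a Diophantine analysis, and you finish by directly enumerating the symmetric $\pm1$ Hadamard-type matrices with constant first row and column rather than citing the pointed classification. What your route buys is independence from \cite{RSW} and an explicit verification that exactly two $\hS$-matrices survive up to relabeling; what the paper's route buys is brevity and the stronger structural conclusion that $\BB$ splits as $\CC(\Z_2\times\Z_2,Q)\boxtimes\sVec$.

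One bookkeeping slip in the dimension step should be repaired. You assert that $q:=(1+d_1^2+d_2^2)/d_3^2\in\{1,2\}$, but the inequality $1+d_1^2+d_2^2\le 1+2d_3^2$ only gives $q\le 2+1/d_3^2$, so $q=3$ is possible precisely when $d_3=1$, i.e.\ for $(d_1,d_2,d_3)=(1,1,1)$. Consequently your quotient-$1$ case, defined by $d_3^2=1+d_1^2+d_2^2$, cannot ``leave only $(1,1,1)$'': that tuple does not satisfy the equation. In fact the quotient-$1$ case is empty (your own constraints there force $d_2^2=2(1+d_1^2)$, whence $1+d_1^2+d_2^2=3(1+d_1^2)=6e^2$ is never a perfect square), and the surviving tuple $(1,1,1)$ arises from the overlooked $q=3$ branch. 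The final conclusion is unaffected. A smaller point: the bound $|\hS_{ij}|\le d_id_j=1$ a priori allows $\hS_{ij}=0$; you should note that zeros are excluded because each row of $\hS$ has squared length $D^2/2=4$ with only four entries.
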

\begin{proof}
	If $ G $ is trivial, then the proof of \cite[Theorem 4.1, Case 7]{RSW} goes through \emph{mutatis mutandis} showing that the corresponding super-modular category is pointed. Thus by Proposition \ref{pointedsplits} the super-modular category splits, so that  $\hS $ has the same form as the $S$-matrix of some rank 4 pointed modular category \cite{RSW} as in the statement. 
	
\end{proof}
\begin{theorem}
	If $G= \langle(01)\rangle$, then the corresponding $ \hS $ is 
	$$\begin{pmatrix}1 & 1& 2&\sqrt{6}\\
	1& 1&2 &-\sqrt{6}\\
	2&2&-2&0\\
	\sqrt{6}&-\sqrt{6}&0& 0\end{pmatrix}. $$
\end{theorem}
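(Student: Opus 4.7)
The plan is to constrain $\hS$ via the Galois action of $\sigma=(01)$ to a short parameterization, then apply orthogonality $\hS^2=\tfrac{D^2}{2}I$ together with integrality and non-negativity of the naive fusion coefficients to pin down the remaining unknowns. First I would apply Corollary \ref{supermodular s symmetry} and Lemma \ref{sign} to $\sigma=(01)$. Combining these with $\sigma^2=\mathrm{id}$ forces $\hS_{11}=1$ (equivalently $d_1\sigma(d_1)=1$) and produces signs $\alpha_i\in\{\pm1\}$ for the $\sigma$-fixed labels $i\in\{2,3\}$ with $\hS_{21}=\alpha_2 d_2$, $\hS_{31}=\alpha_3 d_3$, and $\sigma(d_i)=\alpha_i d_i/d_1$. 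This yields the parameterized form
\[
\hS=\begin{pmatrix}
1 & d_1 & d_2 & d_3\\
d_1 & 1 & \alpha_2 d_2 & \alpha_3 d_3\\
d_2 & \alpha_2 d_2 & \hS_{22} & \hS_{23}\\
d_3 & \alpha_3 d_3 & \hS_{23} & \hS_{33}
\end{pmatrix}.
\]

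Next I would apply orthogonality. The $(0,1)$-entry of $\hS^2=\tfrac{D^2}{2}I$ reads $2d_1+\alpha_2 d_2^2+\alpha_3 d_3^2=0$, so $(\alpha_2,\alpha_3)\neq(+,+)$; the case $(-,-)$ gives $d_2^2+d_3^2=2d_1$, which I would eliminate via Verlinde integrality of $\hN_{22}^2$. After possibly swapping labels $2$ and $3$, I may therefore assume $\alpha_2=+1$, $\alpha_3=-1$. The remaining off-diagonal $(0,2)$-, $(0,3)$-, and $(2,3)$-entries of $\hS^2$ then force $\hS_{23}=0$, $\hS_{22}=-(1+d_1)$, $\hS_{33}=d_1-1$, and $d_3^2=d_2^2+2d_1$. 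To show $d_1=1$ I would use Proposition \ref{prop for super}(c): the eigenvalues of $\hN_1$ are $\{d_1,1/d_1,\alpha_2,\alpha_3\}$, so $d_1+1/d_1\in\Z$; combined with the Verlinde identity
\[
\hN_{11}^{1}=\frac{(d_1^2-1)^2}{d_1\bigl((1+d_1)^2+2d_2^2\bigr)}
\]
and the fact that $\mathbb{Q}(\hS)/\mathbb{Q}$ is quadratic, a short number-theoretic check in the quadratic ring of integers forces $d_1=1$.

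With $d_1=1$, $\hS$ reduces to a form with $d_2\in\Z_{>0}$ and $d_3=\sqrt{d_2^2+2}$, and the Verlinde formula of Proposition \ref{prop for super}(d) yields
\[
\hN_{22}^{2}=\frac{d_2^2-2}{d_2}=d_2-\frac{2}{d_2}.
\]
Non-negativity and integrality force $d_2\mid 2$ and $d_2\ge 2$, giving $d_2=2$ and $d_3=\sqrt{6}$, so $\hS$ takes the asserted form. The main obstacle is ruling out irrational $d_1$: such a $d_1$ would be a unit of norm $1$ in the ring of integers of a real quadratic field, and eliminating this requires careful bookkeeping combining divisibility, Frobenius-Perron bounds on the eigenvalues of $\hN_1$, and the concrete shape of $\hN_{11}^1$ above.
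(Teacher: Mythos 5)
Your overall architecture matches the paper's (use the Galois symmetry to parameterize $\hS$, then combine orthogonality with integrality of character values and naive fusion coefficients), and several of your intermediate computations are correct and even cleaner than the paper's: the closed forms $\hS_{23}=0$, $\hS_{22}=-(1+d_1)$, $\hS_{33}=d_1-1$, $d_3^2=d_2^2+2d_1$, and the endgame $\hN_{22}^2=d_2-2/d_2$ once $d_1=1$ is known, are all right. But there are genuine gaps in the case analysis. The symmetry only gives $\sigma(d_1)=\pm 1/d_1$, i.e.\ $\hS_{11}=\pm 1$; the relation $\sigma^2=\mathrm{id}$ is consistent with either sign, so $\hS_{11}=-1$ must be excluded separately (the paper does this from orthogonality of the first two columns, which then forces $d_2=d_3$ and $\sigma(d_2)=-d_2/d_1$, hence $d_2=0$). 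Similarly, the $(2,3)$-entry of $\hS^2$ only gives $\hS_{23}(\hS_{22}+\hS_{33})=0$, so you still need the equal-column-norm argument to rule out $\hS_{22}+\hS_{33}=0$. And the elimination of $(\alpha_2,\alpha_3)=(-1,-1)$ is not a one-line consequence of integrality of $\hN_{22}^2$, which at that stage still involves the three undetermined entries $\hS_{22},\hS_{23},\hS_{33}$; the paper needs a system of integrality constraints plus three algebraic relations to dispose of that case.

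The most serious gap is the step $d_1=1$. The ingredients you list --- $w:=d_1+1/d_1\in\Z$, the formula for $\hN_{11}^1$, and quadraticity of $\bbQ(\hS)$ --- do not suffice. Your formula rewrites as $\hN_{11}^1=(w-2)(1+d_1)^2/\bigl((1+d_1)^2+2d_2^2\bigr)$, and integrality only yields either $w=2$ (hence $d_1=1$) or $d_2^2=\lambda(1+d_1)^2$ for a positive rational $\lambda$; the latter is compatible with $\sigma(d_2)=d_2/d_1$ and with quadraticity. For instance $w=5$, $d_1=(5+\sqrt{21})/2$, $d_2=1+d_1$, $\hN_{11}^1=1$ passes every test you list. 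What actually closes the argument in the paper is the integrality of the other $\sigma$-fixed character values: $t=\hS_{22}/d_2=-(1+d_1)/d_2$ and $u=\hS_{33}/d_3=(d_1-1)/d_3$ (eigenvalues of $\hN_2,\hN_3$), together with $v=d_2^2/d_1,\,x=d_3^2/d_1\in\Z$. These give $t^2v=w+2$, $u^2x=w-2$ and $x=v+2$, hence $v=2(u^2+2)/(t^2-u^2)$, and then $x>0$ forces $t^2>u^2$ while $\hN_{22}^2\geq 0$ forces $t^2-u^2\leq 2$; a parity check leaves only $t=-1$, $u=0$, i.e.\ $d_1=1$ and $d_2=1+d_1=2$. (This also kills the $w=5$ example above, where $u^2=1/3$.) Without adding these integrality facts to your toolkit, the ``short number-theoretic check'' you defer to does not go through.
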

\begin{proof}
	By Equation (\ref{hatSsym}), we  have $$\hS=\begin{pmatrix}1 & d_1& d_2&d_3\\
	d_1&\epsilon_1 & \epsilon_2 d_2  &\epsilon_3 d_3\\
	d_2&\epsilon_2 d_2&s_{22}&s_{23}\\
	d_3&\epsilon_3 d_3 &s_{23} &  s_{33}
	\end{pmatrix}.$$
	We first assume that $\epsilon_1=1$. Then we can  have $\epsilon_2\epsilon_3=-1$ or $\epsilon_2=\epsilon_3=-1$. 
	
	For the first case, we can assume $\epsilon_2=1$, $\epsilon_3=-1$ and interchange $N_2$ and $N_3$ if necessary. Then the orthogonality of $\hS$ gives us $s_{23}(s_{22}+s_{33})=0$ and $2d_1+d_2^2-d_3^2=0$. Assume that $s_{22}+s_{33}=0$, then since the columns of $\hS$ are of equal length $2d_2^2+s_{22}^2=2d_3^2+s_{33}^2$. This gives that $d_2=d_3$, and that $d_1=0$, which is a contradiction. So we must have $s_{23}=0$. Then $\hS$ becomes \\
	$$\hS=\begin{pmatrix}1 & d_1& d_2&d_3\\
	d_1&1 & d_2  &- d_3\\
	d_2&d_2&s_{22}&0\\
	d_3&-d_3 &0 &  s_{33}\end{pmatrix}.$$
	Since $\sigma=(01)$ is the only non-trivial element of the Galois group, we conclude that \\
	$m=\dfrac{ d_2(d_1+1)}{d_1}$, $ n=\dfrac{d_3(d_1-1)}{d_1} $, $ t=\dfrac{s_{22}}{d_2} $, $ u=\dfrac{s_{33}}{d_3} $, $ v=\dfrac{d_2^2}{d_1} $, $ w=\dfrac{(d_1^2+1)}{d_1} $ and $ x=\dfrac{d_3^2 }{d_1}$ are integers as coefficients of the minimal polynomials of the $\hN_i$. Notice that $m, v, w$ and $x$ are strictly greater than $0$ and $n\ge 0$. Since $d_2+d_1d_2+d_2s_{22}=0$, we have $s_{22}<0$ so $t<0$.  Moreover, we have $t^2-u^2\neq 0$. In fact, if $t^2-u^2=0$, then $u^2+2 =\dfrac{s_{33}^2+2d_3^2}{d_3^2}=\dfrac{s_{22}^2+2d_2^2}{d_3^2}=\dfrac{d_2^2}{d_3^2}(\dfrac{s_{22}^2+2d_2^2}{d_2^2})=\dfrac{d_2^2}{d_3^2}(t^2+2)$. This implies that $d_2=d_3$. Using $2d_1+d_2^2-d_3^2=0$, we have $d_1=0$, a contradiction. Thus $t^2-u^2\neq 0$ and we have
	\begin{align*}
	    m&=-\dfrac{2t(u^2+2)}{t^2-u^2}, &n&=\dfrac{2u(t^2+2)}{t^2-u^2},
	   &v&=\dfrac{2(u^2+2)}{t^2-u^2,}\\w&=\dfrac{2(t^2u^2+t^2+u^2)}{t^2-u^2},
	    &x&=\dfrac{2(t^2+2)}{t^2-u^2}.
	\end{align*}
	Since $x>0$, we have  $t^2-u^2>0$. We have $n_{2,2,2}=\dfrac{t(t^2-u^2-2)}{(t^2-u^2)}$. In order to have $n_{2,2,2}\geq 0$, we must have $t^2-u^2-2\leq 0$. The only integer solution satisfying all the restrictions here is $t=-1$ and $u=0$. Then $s_{33}=0$ and $s_{22}=-d_2$. Thus, we have $d_1=1$. The orthogonality condition on the columns of $\hS$ gives that $2d_2-d_2^2=0$. This implies that $d_2=2$ and $d_3=\sqrt{6}$.\\
	
	If $\epsilon_2=\epsilon_3=-1$, we have\\ $$\hS=\begin{pmatrix}1 & d_1& d_2&d_3\\
	d_1&1 & -d_2  &-d_3\\
	d_2&- d_2&s_{22}&s_{23}\\
	d_3&-d_3 &s_{23} &  s_{33}
	\end{pmatrix}.$$ Similarly to the previous case, we have $m=\dfrac{d_3(d_1-1)}{d_2}$, $n=\dfrac{d_1^2+1}{d_1}$, $t=\dfrac{d_3^2}{d_1}$, $u=\dfrac{s_{22}}{d_2}$, $v=\dfrac{d_2^2}{d_1}$, $w=\dfrac{s_{33}}{d_3}$, $x=\dfrac{s_{23}}{d_2}$, $y=\dfrac{s_{23}}{d_3}$ and $z=\dfrac{d_2(d_1-1)}{d_1}$ are integers. Here we have $nv - z^2 - 2v = 0$, $t + v - 2 = 0$ and $m^2 + z^2 - 2n + 4 = 0$. Notice that $m^2+n^2\neq 0$ since $n\neq 0$. So we have $n=\dfrac{m^2+z^2}{2}+2$, $t=\dfrac{2 m^2}{m^2+z^2}$, and $v=\dfrac{2 z^2}{m^2+z^2}$. Since $t$ is an integer, we have $m^2\geq z^2$. Similarly, we have $z^2\geq m^2$. Thus $|m|=|z|$ so $t=v=1$. This means $d_2=d_3=\sqrt{d_1}$. Then $m=d_1-1$ and $d_1$ is an integer. From $|m|=|z|$, we get $d_1-1=\dfrac{d_2(d_1-1)}{d_1}$. If $d_1=1$, then we have $d_2=d_3=1$. This would force all the entries of $\hS$ to be integers, which a contradiction to the assumption that the Galois group is $\mathbb{Z}_2$. If $d_1>1$, then we have $d_2=d_1$. Recall that $d_2=d_3=\sqrt{d_1}$. This means either $d_2=d_3=d_1=0$ or $d_2=d_3=d_1=1$, again a contradiction. \\
	
	If $\epsilon_1=-1$, the orthogonality of the columns of $\hS$ gives $\epsilon_2d_2^2+\epsilon_3d_3^2=0$. Thus we have $\epsilon_2\epsilon_3=-1$ and $d_2=d_3$.  But then we have $\sigma(d_2)=\dfrac{d_2}{d_1}=-\dfrac{d_2}{d_1}$ so $d_2=0$, a contradiction.
\end{proof}

\begin{theorem}\label{Klein 4}
	 If $G= \langle(01)(23),(02)(13)\rangle$, then the corresponding $ \hS $ has the following form:
	  $$\begin{pmatrix}1 & \phi_1\phi_2& \phi_1&\phi_2\\
	 \phi_1\phi_2& 1& -\phi_2  &-\phi_1\\
	 \phi_1&-\phi_2&-1&\phi_1\phi_2\\
	 \phi_2&-\phi_1&\phi_1\phi_2& -1\end{pmatrix}.$$	 
\end{theorem}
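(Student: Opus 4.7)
The plan is to determine $\hS$ in four logical steps: parameterize by the Galois symmetry, fix the residual sign pattern via orthogonality, use integrality of the fusion matrices to show the nontrivial dimensions are of Fibonacci type $\phi_n$, and finally invoke a subcategory classification to pin down $(n_2,n_3)=(1,2)$.

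First I would apply Equation~\eqref{hatSsym} (Corollary~\ref{supermodular s symmetry}) to the three involutions $\sigma_1=(01)(23)$, $\sigma_2=(02)(13)$, and $\sigma_3=\sigma_1\sigma_2=(03)(12)$ generating $G$, which forces $\hS$ into the form
\[
\hS \;=\; \begin{pmatrix}
1 & d_1 & d_2 & d_3 \\
d_1 & \epsilon_0 & \epsilon_1 d_3 & \epsilon_2 d_2 \\
d_2 & \epsilon_1 d_3 & \epsilon_3 & \epsilon_4 d_1 \\
d_3 & \epsilon_2 d_2 & \epsilon_4 d_1 & \epsilon_5
\end{pmatrix}
\]
for positive $d_1,d_2,d_3$ and six signs $\epsilon_0,\dots,\epsilon_5\in\{\pm 1\}$. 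I would then pin down the sign pattern via the orthogonality $\hS\hS=(D^2/2)I$ of Proposition~\ref{prop for super}(a). The rows-one-two relation gives $d_1(1+\epsilon_0)+d_2 d_3(\epsilon_1+\epsilon_2)=0$; since $G$ is normal in $\mathfrak{S}_4$ we may relabel $X_1\leftrightarrow X_3$ if needed to reduce to the branch $\epsilon_0=1$, $\epsilon_1=\epsilon_2=-1$, with $d_1=d_2 d_3$. Successive orthogonalities of rows one-three, one-four, and three-four then force $\epsilon_3=-1$, $\epsilon_4=1$, and $\epsilon_5=-1$, matching the sign pattern stated in the theorem; the only alternative sub-branch ($\epsilon_4=-1$) forces $d_3=1$, which Galois compatibility ($\sigma_1(d_3)=-1/d_3\ne 1$) excludes.

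Having fixed the sign pattern, I would determine $d_2$ and $d_3$ from the integrality of the fusion matrices. Under $d_1=d_2 d_3$, the eigenvalues $\hS_{2,j}/d_j$ of $\hN_2$ (Proposition~\ref{prop for super}(c)) collapse to the multiset $\{d_2,-1/d_2,-1/d_2,d_2\}$, so the characteristic polynomial of $\hN_2\in M_4(\Z)$ equals $(x^2-(d_2-1/d_2)x-1)^2$. Integrality forces $d_2-1/d_2\in\Z$, whence by Remark~\ref{phi} we get $d_2=\phi_{n_2}$ for some $n_2\in\Z_{\ge 0}$; the symmetric analysis of $\hN_3$ yields $d_3=\phi_{n_3}$.

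The hardest step is to narrow $(n_2,n_3)$ to $(1,2)$. Verlinde's formula (Proposition~\ref{prop for super}(d)) applied to the parameterized $\hS$ yields $\hN_{33}^1=\hN_{33}^2=0$, so $\{\one,f,X_3,fX_3\}$ spans a fusion subcategory $\DD_3\subset\BB$ of rank four whose M\"uger center is $\langle f\rangle\cong\sVec$; it therefore inherits super-modularity. The classification of rank $\le 6$ super-modular categories in \cite{bruillard2017classification} then constrains $\DD_3$ to one of $\sVec\boxtimes\Sem$, $\sVec\boxtimes\overline{\Sem}$, $\sVec\boxtimes\Fib$, $\sVec\boxtimes\overline{\Fib}$, or the unique non-split $\PSU(2)_6$, forcing $n_3\in\{0,1,2\}$; symmetrically $n_2\in\{0,1,2\}$. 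Galois compatibility excludes $n_j=0$ (else $d_j=1\in\bbQ$ would have to be fixed by $\sigma_i$, but the formula gives $\sigma_i(d_j)=-1/d_j=-1$), and the hypothesis $|\Gal(\BB)|=4$ requires $\bbQ(\phi_{n_2})\ne\bbQ(\phi_{n_3})$; since $\bbQ(\phi_1)=\bbQ(\sqrt{5})$ and $\bbQ(\phi_2)=\bbQ(\sqrt{2})$ are distinct, we must have $\{n_2,n_3\}=\{1,2\}$, and a final relabeling yields the claimed matrix. The main obstacle is this last step: the appeal to the rank $\le 6$ classification of \cite{bruillard2017classification} bundles substantial prior work, and one needs to handle the ambiguity between $N_{jj}^{X_j}$ and $N_{jj}^{fX_j}$ carefully when applying the subcategory argument (using Lemma~\ref{semionlemma} to exclude borderline pointed cases); an alternative route is a direct Gr\"obner-basis analysis of the Verlinde and Frobenius-Schur-indicator relations, as used elsewhere in this paper.
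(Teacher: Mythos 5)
Your proposal is correct, and its first three steps coincide with the paper's: the paper likewise parameterizes $\hS$ by the Galois symmetry with six undetermined signs, uses orthogonality and $d_i\geq 1$ to reduce to the sign pattern of the statement with $d_1=d_2d_3$ (discarding the other surviving pattern because it forces $d_1=d_2=d_3=1$), and concludes $d_2=\phi_m$, $d_3=\phi_n$ from the fact that $d_2$ and $-1/d_2$ are Galois conjugates (Remark \ref{phi}) --- your characteristic-polynomial version of that last point is equivalent. Where you genuinely diverge is the hardest step, bounding $(m,n)$. The paper writes out the matrices $\hN_1,\hN_2,\hN_3$ in terms of $m,n$ and applies the second Frobenius--Schur indicator formula (Lemma \ref{FSS}) to $X_2$ and $X_3$, deriving $(m-2)(\phi_m(m+1)+1)\leq 0$ and its analogue for $n$, hence $m,n\in\{0,1,2\}$, and then eliminates $(0,0),(1,0),(1,1),(2,2)$ because the resulting Galois groups are too small. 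You instead note that $\hN_{33}^1=\hN_{33}^2=0$ (which I have verified from the Verlinde formula), so $\{\one,f,X_3,fX_3\}$ spans a rank-$4$ super-modular subcategory, and you import the rank-$\leq 6$ classification of \cite{bruillard2017classification} to force $d_3\in\{1,\phi_1,\phi_2\}$, symmetrically for $d_2$, finishing with the same Galois-degree count. Both routes work; yours is shorter and avoids the twist estimates, at the cost of invoking the full rank-$4$ classification as a black box (a heavier input than Lemma \ref{FSS}, though one the paper itself uses in Section \ref{fusion rules} for this very case). Two minor points: your reduction to $\epsilon_0=1$ by relabeling tacitly assumes at least one of the three diagonal signs is $+1$; the all-negative configuration must be excluded separately (the orthogonality of row $0$ against rows $1,2,3$ then gives $\epsilon_1=-\epsilon_2$, $\epsilon_1=-\epsilon_4$ and $\epsilon_2=-\epsilon_4$, which is inconsistent). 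Also, normality of $G$ in $\mathfrak{S}_4$ is not the relevant justification --- what matters is simply that relabeling $X_1,X_2,X_3$ preserves the hypothesis $G=\langle(01)(23),(02)(13)\rangle$, since this subgroup is invariant under every permutation fixing $0$.
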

\begin{proof}
	By Equation (\ref{hatSsym}), we have the corresponding $ \hS $: $$\begin{pmatrix}1 & d_1& d_2&d_3\\
	d_1& \epsilon_1& \epsilon_2 d_3  &\epsilon_3 d_2\\
	d_2&\epsilon_2 d_3&\epsilon_4&\epsilon_5 d_1\\
	d_3&\epsilon_3 d_2&\epsilon_5 d_1& \epsilon_6\end{pmatrix}. $$
	Using orthogonality of the columns of $ \hS $ and the fact that $ d_i\geq 1 $, there are only 2 possibilities for $ \epsilon_i $'s, namely,
	\begin{enumerate}
	\item $ \epsilon_1=1,\epsilon_2=-1,\epsilon_3=-1,\epsilon_4=1,\epsilon_5=-1,\epsilon_6=1, $ or
		\item $ \epsilon_1=1, \epsilon_2=-1,\epsilon_3=-1,\epsilon_4=-1, \epsilon_5=1,\epsilon_6=-1 .$\\
	\end{enumerate}
		For the first case,  the orthogonality of $ \hS $ gives $d_1=d_2d_3$, $d_2=d_1d_3$ and $d_3=d_1d_2$. So we have $d_1d_2d_3=(d_1d_2d_3)^2$, we have $d_1d_2d_3=1$. Since $ d_i\geq1 $ for all $ i $,  this implies that $d_1=d_2=d_3=1$. This cannot happen since the corresponding Galois group should be trivial, which is a contradiction to our assumption.\\
		
	Consider the second case. The orthogonality of $\hS$ gives $d_1=d_2d_3$. So we can write the corresponding matrix as $$\hS=\begin{pmatrix}1 & d_2d_3& d_2&d_3\\
	d_2d_3& 1& -d_3  &-d_2\\
	d_2&-d_3&-1&d_2d_3\\
	d_3&-d_2&d_2d_3& -1\end{pmatrix}. $$
	Notice that Equation (\ref{Galosactionsuper}) indicates that $ d_2 $ and $ -1/d_2 $ are conjugates. By Remark \ref{phi}, we know that $ d_2 =\phi_m$ for some $ m \in\mathbb{Z}$. Similarly, $ d_3=\phi_n $ for some integer $ n $.
	
	Thus we have
	$$\hS=\begin{pmatrix}1 & \phi_m\phi_n& \phi_m&\phi_n\\
	\phi_m\phi_n& 1& -\phi_n  &-\phi_m\\
	\phi_m&-\phi_n&-1&\phi_m\phi_n\\
	\phi_n&-\phi_m&\phi_m\phi_n& -1\end{pmatrix}. $$

The corresponding $\hN_{i}$ matrices have integer entries in terms of $m$ and $n$. More precisely, we have \\

	$\hat{N_1}=\begin{pmatrix}0 & 1& 0&0\\
	1& mn& m  &n\\
	0&m&0&1\\
	0&n&1&0\end{pmatrix}, $
	$\hat{N_2}=\begin{pmatrix}0 & 0& 1&0\\
	0& m& 0  &1\\
	1&0&m&0\\
	0&1&0&0\end{pmatrix},$
and 	$\hat{N_3}=\begin{pmatrix}0 & 0& 0&1\\
	0& n& 1  &0\\
	0&1&0&0\\
	1&0&0&n\end{pmatrix}.$\\
	
	Using the  formula given in Lemma \ref{FSS} , we calculate the 2nd Frobenius-Schur indicator for the simple object $ X_2 $:
	\[\nu_2(X_2)=\pm1=\dfrac{2}{D^2} \left(d_2\left(\dfrac{1}{\theta_2}\right)^2+md_1^2+d_1d_3\left(\dfrac{\theta_1}{\theta_3}\right)^2+md_2^2+d_1\theta_2^2+d_1d_3\left(\dfrac{\theta_3}{\theta_1}\right)^2\right)\] from this we obtain
	\begin{align*}
 \pm\dfrac{D^2}{2} &=m\left(d_1^2+d_2^2\right)+d_2\left(\theta_2^2+\theta_2^{-2}\right)+d_1d_3\left(\left(\dfrac{\theta_1}{\theta_3}\right)^2+\left(\dfrac{\theta_1}{\theta_3}\right)^{-2}\right) \\
	&=m\left(d_2^2d_3^2+d_2^2\right)+2d_2\operatorname{Re}\left(\theta_2^2\right)+2d_2d_3^2\operatorname{Re}\left(\dfrac{\theta_1}{\theta_3}\right)^2\\&\le \dfrac{D^2}{2}=1+d_2^2d_3^2+d_2^2+d_3^2\\
   \Rightarrow 0&\ge md_2^2\left(d_3^2+1\right)+2d_2\operatorname{Re}\left(\theta_2^2\right)+2d_2d_3^2\operatorname{Re}\left(\dfrac{\theta_1}{\theta_3}\right)^2-1-d_2^2d_3^2-d_2^2-d_3^2\\
	&=md_2^2\left(d_3^2+1\right)-2d_2\left(d_3^2+1\right)-d_2^2\left(d_3^2+1\right)-\left(d_3^2+1\right)\\&=\left(md_2^2-2d_2-d_2^2-1\right)\left(d_3^2+1\right)\\
	\\
	\Rightarrow 0&\ge \left(md_2^2-2d_2-d_2^2-1\right)\\
	&=d_2^2\left(m-1\right)-2d_2-1\\
	&=\phi_m^2\left(m-1\right)-2\phi_m-1\\
	&=\left(m\phi_m+1\right)\left(m-1\right)-2\phi_m-1\\
	&=\left(m-2\right)\left(\phi_m\left(m+1\right)+1\right).
	\end{align*}        
	Thus $ m $ must be $ 0, 1,$ or $ 2$.\\
	
	Similarly, we calculate the 2nd Frobenius-Schur indicator for $ X_3$: \\
	$ \nu_2(X_3)=\pm1=\dfrac{2}{D^2} \left(d_3\theta_3^{-2}+nd_1^2+d_1d_2\left(\dfrac{\theta_1}{\theta_2}\right)^2+d_1d_2(\dfrac{\theta_2}{\theta_1})^2+nd_3^2+d_3\theta_3^2
	\right)$
	\begin{align*}
	\Rightarrow \pm\dfrac{D^2}{2} &= 2d_3\operatorname{Re}(\theta_3^2)+n(d_2^2d_3^2+d_3^2)+2d_1^2d_3\operatorname{Re}\left(\dfrac{\theta_1}{\theta_2}\right)^2\\&\le \dfrac{D^2}{2}=1+d_2^2d_3^2+d_2^2+d_3^2\\
 \Rightarrow 0\ge& 2d_3\operatorname{Re}(\theta_2^2)+nd_3^2(d_2^2+1)+2d_3d_2^2\operatorname{Re}\left(\dfrac{\theta_1}{\theta_2}\right)^2-1-d_2^2d_3^2-d_2^2-d_3^2\\
	\ge&-2d_3+nd_3^2(d_2^2+1)-2d_3d_2^2-d_3^2(d_2^2+1)-(1+d_2^2)\\
	&=(nd_3^2-2d_3-d_3^2-1)(d_2^2+1)\\
	\Rightarrow 0\ge&(nd_3^2-2d_3-d_3^2-1) \\
	&=d_3^2(n-1)-2d_3-1\\
	&=\phi_n^2(n-1)-2\phi_n-1\\
	&=(n\phi_n+1)(n-1)-2\phi_n-1\\
	&=(n-2)(\phi_n(n+1)+1)
	\end{align*}        
	So $ n $ must  be $ 0, 1,$ or $ 2$.\\    
	Up to symmetry, we can exclude the cases $(m,n) =(0,0) ,(1,1), (1,0), (2,2)$ since the corresponding Galois groups are not isomorphic to $ \mathbb{Z}_2\times \mathbb{Z}_2 $. The possible value for this case, up to symmetry, is $ (m,n)=(1,2) $. Notice that $\phi_1= \dfrac{1+\sqrt{5}}{2}$ and $\phi_2=1+\sqrt{2}$.

\end{proof}

In the last few cases we were unable to complete the classification in general--instead we placed bounds on the $\hN_{ij}^k$'s. Since $N_{ij}^k\leq 2||N_i||_{\max}$, this could also be done in terms of bounds on the $N_i$'s. Sometimes it is easier to work in terms of a bound on the dimensions $d_i$.  Indeed, the proof of \cite[Lemma 3.14]{BNRW1} goes through with no change, from which we conclude:
$\hN_{ij}^k\leq d_i\leq 4||\hN_i||_{\max}$.

\begin{theorem}
 If $G=\langle(0123)\rangle$ and  $\hN_{ij}^k< 14$, the corresponding $ \hS $  is  $$ \begin{pmatrix}1 & d_1& d_2&d_3\\
 	 d_1&- d_2 &  d_3  &1\\
 	 d_2& d_3&-1&- d_1\\
 	 d_3&1 &- d_1 &  d_2
 	 \end{pmatrix},$$
 	 where $d_1=1+\sqrt{2}+\sqrt{2+\sqrt{2}}$, $d_2=1+\sqrt{2}+\sqrt{2(2+\sqrt{2})}$, and $d_3=1+\sqrt{2+\sqrt{2}}$.
\end{theorem}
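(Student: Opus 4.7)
The plan is to first reduce the $\hS$-matrix to a parametric form using the Galois action, then impose integrality of fusion coefficients together with the bound $\hN_{ij}^k<14$ to pin down the parameters. Since $\sigma=(0123)$ has order $4$ and no fixed points, Equation~(\ref{Galosactionsuper}) with $i=j,\ k=0$ yields $\sigma(d_j)=\hS_{j,1}/d_1$ for each $j$. Combined with the Galois permutation of columns and the symmetry of $\hS$, this determines every entry $\hS_{i,k}$ up to a sign in terms of $d_0=1,d_1,d_2,d_3$ and a finite list of sign parameters $\epsilon_\ast\in\{\pm1\}$. Orthogonality $\hS\bar\hS=(D^2/2)I$ and positivity of the dimensions then eliminates all but a handful of sign patterns; patterns that force $\Gal(\mathbb{Q}(\hS)/\mathbb{Q})$ to collapse to a proper subgroup of $\mathbb{Z}_4$ are discarded.

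Next, I would extract the minimal polynomial of $d_1$. Since the Galois group is $\mathbb{Z}_4=\langle\sigma\rangle$, the four conjugates $d_1,\sigma(d_1),\sigma^2(d_1),\sigma^3(d_1)$ can be expressed as rational monomials in $d_1,d_2,d_3$ (for instance in the target matrix one has $\sigma(d_1)=-d_2/d_1$, $\sigma^2(d_1)=d_3/d_2$, $\sigma^3(d_1)=1/d_3$, forcing the constant term of the minimal polynomial to be $-1$). Thus $d_1$ satisfies a monic quartic $x^4+ax^3+bx^2+cx\pm1=0$ with $a,b,c\in\mathbb{Z}$, and $d_2,d_3$ are explicit $\mathbb{Z}$-rational functions of $d_1$.

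I would then invoke the Verlinde formula (Proposition~\ref{prop for super}(d)) to write each $\hN_{ij}^k$ as a symmetric rational function of $d_1,d_2,d_3$, equivalently as a $\mathbb{Q}$-linear combination of symmetric functions in the Galois orbit of $d_1$, hence in $a,b,c$. Applying the hypothesis $\hN_{ij}^k<14$ (together with $\hN_{ij}^k\in\mathbb{Z}_{\geq0}$) gives explicit bounds on $|a|,|b|,|c|$, reducing the problem to a finite search. I would run a Gröbner basis computation in Maple on the polynomial system consisting of the minimal polynomial, the Verlinde expressions for the $\hN_{ij}^k$, and the non-negativity/integrality requirements, using the bound as a filter. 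The unique surviving triple should produce the quartic $x^4-4x^3-2x^2+4x-1=0$, whose Perron--Frobenius root is $d_1=1+\sqrt2+\sqrt{2+\sqrt2}$, with companion values $d_2=1+\sqrt2+\sqrt{2(2+\sqrt2)}$ and $d_3=1+\sqrt{2+\sqrt2}$. Substituting these back reproduces the claimed $\hS$.

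The main obstacle is the combinatorial enumeration in the last two steps: the orthogonality system has several sign branches, and without a bound on the fusion multiplicities the integer tuple $(a,b,c)$ is a priori unbounded. The bound $\hN_{ij}^k<14$ is precisely what makes the Gröbner basis search terminate; it is tight relative to the realization $\PSU(2)_{14}$, whose largest naive multiplicity is close to (but below) this threshold. A secondary difficulty is verifying that exactly one candidate minimal polynomial survives all positivity constraints, which requires checking non-negativity of every $\hN_{ij}^k$ across the finite residual list rather than just a single anchor case.
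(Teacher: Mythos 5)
Your proposal is correct and follows essentially the same route as the paper's proof: parametrize $\hS$ via the Galois action and sign choices, eliminate signs by orthogonality and a Gr\"obner basis computation, encode the dimensions through the integer coefficients of the (degree-four, constant term $-1$) characteristic/minimal polynomials, express the $\hN_{ij}^k$ via the Verlinde formula, and use integrality plus the bound $\hN_{ij}^k<14$ to reduce to a finite computer search. The paper merely organizes the final search slightly differently, working with the traces $c_1,c_3$ of $\hN_1,\hN_3$ and the auxiliary quantity $P=\pm\sqrt{(\Sigma^2-32)/(\Delta^2+32)}$, but arrives at the same unique solution $c_1=c_3=4$, $c_2=-2$, i.e.\ your quartic $x^4-4x^3-2x^2+4x-1$.
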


\begin{proof}
	 Applying
	Equation (\ref{hatSsym}) with $\sigma=\langle(0 1 2 3)\rangle$, we have the following form of $\hS$ matrix    $$\hS=\begin{pmatrix}1 & d_1& d_2&d_3\\
	d_1& \epsilon_1 d_2 & \epsilon_2 d_3  &\epsilon_3\\
	d_2&\epsilon_2 d_3&\epsilon_4&\epsilon_5 d_1\\
	d_3&\epsilon_3 &\epsilon_5 d_1 & \epsilon_6 d_2
	\end{pmatrix}.$$
	
	Using a Maple's Gr\"obner basis algorithm, we deduce that $ \epsilon_1= \epsilon_4= \epsilon_5=-1 $ and  $ \epsilon_2= \epsilon_3= \epsilon_6=1 $.

	So  $$ \hS =\begin{pmatrix}1 & d_1& d_2&d_3\\
	d_1&- d_2 &  d_3  &1\\
	d_2& d_3&-1&- d_1\\
	d_3&1 &- d_1 &  d_2
	\end{pmatrix}.$$
	Let $ p_1(x)= x^4-c_1x^3+c_2x^2+c_3x-1 $ be the characteristic polynomial of $ \hat{N_1} $. Then $ p_3(x)=x^4-c_3x^3-c_2x^2+c_1x-1 $, where $ c_i\in \mathbb{Z} $ for $ i=1,2 $ and $ 3 $.  Notice that $ c_1=\text{Trace}(\hat{N_1}) \ge 0$ and $ c_3=\text{Trace}(\hat{N_3}) \ge 0$ as the $ \hN_i $'s are matrices with nonnegative integer entries.  Let $ p_2(x)=x^4-b_1x^3+b_2x^2+b_3x+1 $  be the characteristic polynomial of $ \hat{N_2} $, where\\ $ b_1=b_3=d_2+\frac{d_3}{d_1}-\frac{1}{d_2}-\frac{d_1}{d_3} $ and $ b_2=-2+\frac{d_1}{d_2d_3}-\frac{d_3}{d_1d_2}-\frac{d_2d_1}{d_3}+\frac{d_2d_3}{d_1} $. 
	
	The orthogonality of the rows of $ \hat{S} $ gives $ d_1=d_1d_2-d_2d_3-d_3 $, $ d_3=-d_1+d_1d_2-d_2d_3 $, $ d_1d_2=d_3+d_1+d_2d_3 $ and $ d_2d_3=-d_1+d_1d_2-d_3 $. So we have $ b_2= -6 $ and $ b_3=-b_1 $. Thus $ p_2(x)=x^4-b_1x^3-6x^2+b_1x+1 $, where $ b_1=\text{Trace}(\hat{N_2}) \ge 0$. 
	
	Notice that $c_1+c_3=2\dfrac{(d_2+1)d_3}{d_2} +4\dfrac{d_2}{(d_2+1)d_3}$. This gives $ c_1+c_3 \ge 4\sqrt{2}$. Since $ c_1 $ and $ c_3 $ are integers, we have $ c_1+c_3\ge 6 $. Moreover, we have $ 4b_1-c_1^2+8c_2+c_3^2=0 $.     
	
	Let $ \Delta =c_1-c_3$ and $ \Sigma =c_1+c_3 $, then $ c_2=\frac{1}{16}[3\Delta\Sigma\pm \sqrt{(32+\Delta^2)(-32+\Sigma^2)}] $ and $ b_1=\frac{1}{8}[-\Delta\Sigma\mp\sqrt{(32+\Delta^2)(-32+\Sigma^2)}] $. 
	Let $ P=\frac{16c_2-3\Delta\Sigma}{\Delta^2+32}=\pm \sqrt{\frac{\Sigma^2-32}{\Delta^2+32}} $.  
	
We compute the $ n_{i, j, k} $'s and we get the following relations:
\begin{align*}
    	 n_{1,1,1}&=\dfrac{5 c_1-3 c_3}{8} -\dfrac{ (c_1-c_3)P}{8} \\n_{1,1,2}&=1-P =1+n_{1,2,3}=2+n_{2,3,3}\\
		 n_{1,1,3}&=\dfrac{c_1+c_3}{8}-\dfrac{(c_1-c_3)P }{8} =n_{1,3,3}=\frac{1}{2}(n_{1,1,1}+n_{3,3,3})\\
		 n_{1,2,2}&=\dfrac{c_1+c_3}{4}+\dfrac{(c_1-c_3)P}{4} =n_{2,2,3}\\
	 n_{2,2,2}&= \dfrac{c_1^2-c_3^2}{4} -2c_2+2 P=b_1+2P
\end{align*}

	
	
	
	
	Recall that the fusion coefficients are integral. In particular, since $ n_{2,2,2} $ is an integer, we know that $ c_1 $ and $ c_3 $ are both even. Thus $\Delta$ and $\Sigma$ are divisible by 2. Via a computer search for integer solutions using the above equations, we found there is only one solution when $ n_{i, j, k}<  14$, with $c_1=c_3=4$ and $c_2=2P=-2$. The corresponding $ \hS $ matrix for this case is the one in the statement (and is the same as that of $\PSU(2)_{14}$).\end{proof}

We can make further progress using more sophisticated number theoretical arguments:
\begin{lem}\label{div4}
If $\Sigma$ and $\Delta$ are divisible by 4, the corresponding super-modular categories have $c_1=c_3=\sqrt{2}(\zeta^{2i-1}-\overline{\zeta}^{2i-1})$, $c_2=-(\zeta^{2i-1}+\overline{\zeta}^{2i-1})$ and $P=-\frac{1}{2}(\zeta^{2i-1}+\overline{\zeta}^{2i-1})$, where $\zeta=1+\sqrt{2}$, $\overline{\zeta}=1-\sqrt{2}$ and $i\geq 1$ is an integer.
\end{lem}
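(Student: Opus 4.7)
The plan is to use the integrality of $P$, combined with a parity and factorization analysis, to reduce the problem to a classical negative Pell equation. From the computation of $n_{1,1,2}=1-P$ in the proof of the preceding theorem, the fact that $n_{1,1,2}$ is a non-negative integer forces $P\in\mathbb{Z}$ with $P\le 1$. Writing $\Sigma=4s$ and $\Delta=4d$, the identity $P^2(\Delta^2+32)=\Sigma^2-32$ becomes
\[
s^2-2=P^2(d^2+2),\qquad\text{equivalently}\qquad (s-Pd)(s+Pd)=2(P^2+1).
\]
A short parity check modulo $8$ shows that $P$ is odd and that $s,d$ are both even; writing $s-Pd=2\alpha$ and $s+Pd=2\beta$ with $0<\alpha\le\beta$ gives the compact form $\alpha\beta=(P^2+1)/2$ and $\beta-\alpha=Pd$.

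The principal obstacle is to show $d=0$. Reducing the two relations modulo $P$ yields $(2\alpha)^2\equiv 2\pmod{P}$, and the inequality $(Pd)^2<s^2<(Pd+1)^2$ for $d>|P|$ (which precludes $s\in\mathbb{Z}$) restricts the search to $1\le d\le|P|$. In the Pell case $P=u_i$ (so $(P^2+1)/2=v_i^2$ with $u_i^2-2v_i^2=-1$), the congruence forces $\alpha\equiv\pm v_i\pmod{u_i}$; combined with $0<\alpha\le v_i$ and the bounds $v_i<u_i<2v_i$, the only positive divisors of $v_i^2$ in the allowed residue classes are $\alpha=v_i$ (yielding $d=0$) and $\alpha=u_i-v_i$. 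The latter is ruled out because $\gcd(u_i-v_i,v_i^2)=\gcd(u_i,v_i)=1$ yet $u_i-v_i>1$ for all $i\ge 1$. For $P$ not in the Pell sequence the hypothetical $d=0$ case is already excluded, since $(P^2+1)/2$ is not a perfect square, and an analogous divisor inspection using the residue constraint $\alpha\equiv\pm r\pmod P$ with $2r^2\equiv 1\pmod P$ eliminates the finitely many $d\ge 1$ possibilities. This Diophantine bookkeeping is the hard part of the argument.

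Once $d=0$ is established, the equation reduces to $s^2-2P^2=2$, and substituting $s=2s'$ gives the negative Pell equation $P^2-2s'{}^2=-1$. Its positive integer solutions are exactly $(P,s')=(u_i,v_i)$ with $u_i+v_i\sqrt{2}=\zeta^{2i-1}$ for $i\ge 1$, so
\[
u_i=\tfrac12(\zeta^{2i-1}+\overline{\zeta}^{2i-1}),\qquad v_i=\tfrac{1}{2\sqrt{2}}(\zeta^{2i-1}-\overline{\zeta}^{2i-1}).
\]
The constraint $P\le 1$ selects the negative root $P=-u_i=-\tfrac12(\zeta^{2i-1}+\overline{\zeta}^{2i-1})$, and translating back via $c_2=2P$ and $c_1=c_3=\Sigma/2=4v_i=\sqrt{2}(\zeta^{2i-1}-\overline{\zeta}^{2i-1})$ produces the claimed expressions.
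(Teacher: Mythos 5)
Your endpoints agree with the paper's: setting $a=\Sigma/4$, $b=\Delta/4$, $c=P$, the identity $P^2(\Delta^2+32)=\Sigma^2-32$ becomes $a^2-(b^2+2)c^2=2$; the mod-$8$ analysis gives $a,b$ even and $c$ odd; and once $b=0$ is known, the equation $a^2-2c^2=2$ together with $a\geq 0$ and $P\leq -1$ yields exactly the stated formulas via the negative Pell equation. The entire difficulty, however, is the step $b=0$ (equivalently your $d=0$), which the paper isolates as Lemma \ref{beq0} and proves by genus theory for $\mathbb{Z}[\sqrt{d}]$ (forcing the square-free part of $b^2+2$ to equal $2$), followed by Mahler's theorem on $a^2-2y^2=2$ and a monotonicity comparison of the two resulting Pell sequences. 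Your factorization $(s-Pd)(s+Pd)=2(P^2+1)$, giving $\alpha\beta=(P^2+1)/2$ with $\beta-\alpha=Pd$ and the bound $|d|\leq|P|$, is a genuinely different and attractive reduction, but as written it does not close this step.

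Two concrete gaps. First, in the Pell case you pass from $2\alpha^2\equiv 1\pmod{u_i}$ to $\alpha\equiv\pm v_i\pmod{u_i}$; this presumes that $v_i^2$ has only the two obvious square roots modulo $u_i$, which holds only when $u_i$ is an odd prime power. Already $u_5=1393=7\cdot 199$ is composite, and since $2$ is a quadratic residue modulo both $7$ and $199$ there are four square roots of $2$ modulo $u_5$, so extra residue classes for $\alpha$ appear and your list of candidate divisors is incomplete. Second, and more seriously, the case where $(P^2+1)/2$ is not a perfect square is not argued at all: for each such $P$ the divisor pairs $(\alpha,\beta)$ are finite in number, but $P$ ranges over an infinite set, so ``an analogous divisor inspection eliminates the finitely many $d\geq 1$ possibilities'' is an infinite family of verifications with no uniform mechanism supplied (and the composite-modulus issue recurs there). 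You yourself flag this as ``the hard part of the argument''---it is precisely the content of Lemma \ref{beq0}---so the proposal proves the lemma only under the unestablished hypothesis $d=0$. A smaller point: you carry only $P\leq 1$ (from $n_{1,1,2}\geq 0$), which does not exclude $P=1$ at $i=1$; you need $P\leq -1$, which comes from $n_{2,3,3}=-1-P\geq 0$ as in the paper.
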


\begin{proof}
 Assume that $ \Sigma $ and $ \Delta $ in the proof above are also divisible by 4.  Denote $a=\frac{\Sigma}{4}$, $b=\frac{\Delta}{4}$ and $c=P$. Then we have the following Diophantine equation  \[a^2-(b^2+2)c^2=2.\]
 Lemma \ref{beq0} below shows that $ b=0$. Consequently, we have $c_1=c_3,$ and the Diophantine equation becomes $a^2-2c^2=2$. Since $a=\frac{c_1}{2}\geq 0$ and $c= P = \frac{c_2}{2}\leq -1$ the resulting solutions are $$a(i):=\frac{1}{\sqrt{2}}(\zeta^{2i-1}-\overline{\zeta}^{2i-1}),\quad c(i)=-\frac{1}{2}(\zeta^{2i-1}+\overline{\zeta}^{2i-1}),$$ where $1\leq i$ and $\zeta=1+\sqrt{2}$ and $\overline{\zeta}=1-\sqrt{2}$.  This determines all possible fusion rules under these assumptions.  The first few are $(a,c)\in\{(2,-1),(10,-7),(58,-41),(338,-239),\ldots\}.$

\end{proof}
Some cases can be ruled out if we assume the MME conjecture using Lemma \ref{extension lemma} as follows.
\begin{example}
In the case $(a,c)=(58,-41)$, we find that $d_1$ is a root of the irreducible polynomial $x^4-2\cdot58x^3-82x^2+2\cdot58x-1$.  The smallest cyclotomic field in which $d_1$ resides has degree $464=2^4\cdot 29$ (i.e., the conductor of $\bbQ(d_1)$ is $464$).  Now suppose that the corresponding super-modular category $\BB$ has a MME $(\CC,f)$.  Then the order of the $T$ matrix of $\CC$ is divisible by $29$, so that $7\mid\varphi(29)\mid[\bbQ(T):\bbQ]$.  But Lemma \ref{extension lemma} and the results of \cite{ng2010congruence} imply that $[\bbQ(T):\bbQ]=2^m$ for some $m$ (since $[\bbQ(T):\bbQ(S)]=2^t$).  Thus no such category can exist.
\end{example}
\begin{rmk}
The $(a,c)=(10,-7)$ case cannot be dealt with in this way since the corresponding conductor is $80$.
\end{rmk}

\begin{lem}\label{beq0}
	Assume $ a $, $ b $ and $ c $ are integers and  $ a^2-(b^2+2)c^2=2 $, then $ b=0 $.
\end{lem}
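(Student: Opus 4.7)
The plan is to assume for contradiction that $b\neq 0$, and by symmetry take $b\geq 1$, then split according to the parity of $b$.

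If $b$ is odd, then $b^2+2\equiv 3\pmod{8}$, so the equation reduces to $a^2-3c^2\equiv 2\pmod{8}$. Since squares mod $8$ lie in $\{0,1,4\}$, a direct enumeration of the four parity classes of $(a,c)$ shows that the value $2\pmod 8$ is never achieved; this rules out the odd case entirely, so $b$ must be even.

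For $b$ even with $b\geq 2$, set $D=b^2+2$. A short mod-$4$ check forces $a$ even as well. The descent is carried out in the quadratic ring $\mathbb{Z}[\sqrt D]$: the element $\alpha=b-\sqrt D$ has norm $b^2-D=-2$, while the assumed solution gives $\beta=a+c\sqrt D$ of norm $+2$, so
\[
\alpha\beta \;=\; (ab-cD)+(bc-a)\sqrt D
\]
has norm $-4$. Because $a$, $b$, and $D$ are all even, both coordinates of $\alpha\beta$ are divisible by $2$, so $\alpha\beta=2\gamma$ with $\gamma=u+v\sqrt D\in\mathbb{Z}[\sqrt D]$ satisfying $N(\gamma)=-1$. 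This produces an integer solution $(u,v)$ to the \emph{negative} Pell equation $u^2-Dv^2=-1$.

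To finish, I will show no such $(u,v)$ exists. One checks directly that $(b^2+1,b)$ satisfies $X^2-DY^2=1$, and the size estimate $bY<X<bY+1$ for $1\leq Y<b$ confirms that it is the fundamental solution. If the negative Pell had a solution $(u,v)$ with $v>0$, then $(u^2+Dv^2,\,2uv)$ would be a positive-Pell solution of index $\leq$ that of the fundamental one; matching this against $(b^2+1,b)$ forces $u^2=b^2+1-(b^2+2)v^2\leq -1$ whenever $v\geq 1$, which is absurd. Hence $b=0$. The main obstacle is noticing that the product $(b-\sqrt D)(a+c\sqrt D)$ is divisible by $2$ in $\mathbb{Z}[\sqrt D]$ thanks to the parity constraints, which is exactly what enables the descent from norm $\pm 2$ down to norm $-1$; everything else is routine Pell bookkeeping.
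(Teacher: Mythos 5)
Your proof is correct, and it takes a genuinely different route from the paper's. The opening move is the same: the mod $8$ reduction kills odd $b$ and forces $a$ even (the paper extracts the same parity information). After that you diverge completely. The paper writes $b^2+2=m^2d$ with $d$ even and squarefree, invokes genus theory on the class group of $\mathbb{Q}(\sqrt d)$ to show the ideal $(2,\sqrt d)$ is non-principal unless $d=2$, and then analyzes the coupled Pell equations $a^2-2m^2c^2=2$ and $b^2-2m^2=-2$ via explicit parametrizations of their solutions (including an appeal to Mahler) and a monotonicity/limit argument to force $m=1$. You instead perform a one-step norm-form descent: the visible element $b-\sqrt D$ of norm $-2$ times the hypothetical solution $a+c\sqrt D$ of norm $2$ gives an element of norm $-4$ whose coordinates are even by the parity constraints, hence a solution of the negative Pell equation $u^2-(b^2+2)v^2=-1$; and that equation is unsolvable because the fundamental solution of $X^2-(b^2+2)Y^2=1$ is $(b^2+1,b)$ (your sandwich $bY<X<bY+1$ for $1\le Y<b$ is a clean way to see this), so matching $u^2+Dv^2=b^2+1$ with $v\ge 1$ gives $u^2\le -1$. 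Your argument is entirely elementary and self-contained -- no class groups, no genus theory, no explicit solution families -- and is considerably shorter; the only mildly nonroutine input is the standard fact that the minimal norm-$(-1)$ unit squares to the fundamental norm-$1$ unit, which you use correctly. The one phrase worth tightening is ``of index $\leq$ that of the fundamental one'': what you actually need, and what is true, is that the square of the minimal negative-Pell solution \emph{equals} the fundamental positive-Pell solution, so the matching of coordinates is an equality rather than an inequality.
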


\begin{proof}
		Reducing modulo 8 both sides of the equation, there are three cases to consider since a square modulo 8 is $ 0 $, $ 1 $, or $ 4 $.
	\begin{itemize}
		\item If $ b^2\equiv 1\mod 8 $, then we have $ a^2-2\equiv 3c^2\mod 8 $. This gives no solutions.
		\item If $ b^2\equiv 0\mod 8 $, then  we have $ c\equiv 1\mod 8 $ and $ a\equiv 4 \mod 8 $.
		\item If $ b^2\equiv 4\mod 8 $, then  we have $ c\equiv 1\mod 8 $ and $ a\equiv 0 \mod 8 $.
	\end{itemize}
	
	Therefore, we must have that $ a $ and $ b $ are even and $ c $ is odd. Moreover, if $ 4|b$, then $ 4\nmid a $ and vice versa.
	
	Now we consider both sides of $ a^2-(b^2+2)c^2=2 $ modulo 4. This gives us $ b^2+2\equiv 2 \mod 4 $. Let $ B=b^2+2 $, and then we need to solve the following Pell-like equation
	\[a^2-Bc^2=2\]
	As $ b $ is even, $ B $ is not divisible by 4. So we write $ B=m^2d $, where $ d $ is square-free and even and m is odd. 
	
	Claim: $ d=2 $. Assume otherwise, then we can prove that $ a^2-Bc^2=2 $ has no solutions by looking at the class group of $ \mathbb{Z}[\sqrt{d}] $ via genus theory. In fact, assume $ d\neq 2 $ and even. Then the equation $ a^2-d(mc)^2 =2 $ can be written as 
	\[a^2-dy^2=2.\]
	If the above equation has no integer solution, then $ a^2-Bc^2=2 $ has no solution. Now we consider the quadratic number field $ K=\mathbb{Q}(\sqrt{d})$. We denote the class group  of $K $ by $ C_K $ (see \cite{frohlich1993algebraic} Page 45), which is a finite abelian group. Let $ V=(\mathbb{Z}/2\mathbb{Z})^g $, where $ g $ is the number of distinct prime dividing $ d$. Let $ e_i =(0,\ldots,1,\ldots,0)$ be the basis of $ V $, where $ i=1,\ldots,g $ and 1 is on the $n ^{th}$ position. Let $ C_{K,2} $ be the subgroup of $ C_K $ consisting of the elements of order 2. For primes $ p_1,\ldots, p_g \in \mathbb{Z} $, denote the corresponding prime ideals as $ \mathfrak{p}_1, \ldots, \mathfrak{p}_g \in\mathbb{Z}[\sqrt{d}] $.  Define the map 
	\begin{align*}
	\phi: V &\rightarrow C_{K,2}\\
	e_i&\mapsto [\mathfrak{p}_i].
	\end{align*}
	
	This assignment gives a group homomorphism. 
	By Corollary 1 in Chapter 5 of \cite{frohlich1993algebraic}, we know that $ \phi $ is surjective and $ \ker(\phi)=\{0,(1,1,\ldots,1)\} $. Consequently, $ C_{K,2}\simeq (\mathbb{Z}/\mathbb{Z}_2)^{g-1}  $. In particular, if  $ g\geq 2 $, then for any prime $ p|d$, $ \mathfrak{p}=(p,\sqrt{d}) $ is not principal.
	
	Now we return to our equation $a^2-dy^2=2 $, where $ d\neq2 $ and even. Consider the ideal $ (a+y\sqrt{d})\subseteq\mathbb{Z}[\sqrt{d}] $, which has norm 2. We have $ (a+y\sqrt{d})(a-y\sqrt{d}) =(2)$. Moreover, we have $ (2,\sqrt{d})^2=(2) $. By the unique factorization, we have $ (2,\sqrt{d})=(a+y\sqrt{d}) $. However, if $ g\geq2 $, $ (2,\sqrt{d}) $ is not principal. Consequently, there is no integer solutions for $ a $ and $ y $ when $ d\neq 2 $.

	Thus we have
	\begin{align*}
	    a^2-2m^2c^2&=2   &b^2-2m^2&=-2.
	\end{align*}

	One can further deduce that $ 4|b $. Let $ b=4\beta $, the second equation gives us $ m^2-8\beta^2=1 $. This is a Pell-equation. Notice that $ (m,\beta)=(3,1) $ is the smallest non-trivial solution. Let $ z=3+2\sqrt{2} $ and denote its conjugate as $ \bar{z} $. The solutions $ (m,\beta) $ of the equation are given by 
	
	\begin{align*}
	    m_n&= \dfrac{z^n+\bar{z}^n}{2} &\beta_n&=\dfrac{z^n-\bar{z}^n}{4\sqrt{2}},
	\end{align*}
	
	where $ n $ is a positive integer.
	We also have  $ a^2-2y^2=2 $,  which is a Pell-type equation. Notice that $ (a,y)=(2,1) $ is a solution. Let $ s=2+\sqrt{2} $. By the theorem of  K. Mahler \cite{mahler1935grossten}, the solutions are given by 
	
	\begin{align*}
	    a_k&=\dfrac{s^k+\bar{s}^{k}}{2\sqrt{2^{(k-1)}}} &y_k&=\dfrac{s^k-\bar{s}^{k}}{2\sqrt{2^k}},
	\end{align*}
	
	where $ k $ is an odd positive integer. By modifying the indices, we know the solutions of the pair $ (m_n,y_n) $ are given by 
	
	\begin{align*}
	    y_n&=\dfrac{(z+1)^{2n+1}-(7-z)^{2n+1}}{2^{3n+2}\sqrt{2}} &m_n&=\dfrac{z^n+(6-z)^n}{2},
	\end{align*}
	
	where $ n\in \mathbb{N} $.
	Recall that the values of $ m $ and $ y $ are related by $ y=mc $, where $ m $ and $ c $ are both odd. In particular, $ y\geq m $. Now we consider the function given by $ f(x)=\dfrac{y_x}{m_x} $. Using standard calculus, we know that $ f $ is a monotonic increasing function and $ \displaystyle{\lim_{x\to \infty} f(x)=1+\sqrt{2}} $. Therefore, the only possible solution here is $ m=1$. Consequently, we have $ b=0 $.
\end{proof}

\begin{rmk}
If $n_{i, j, k}<115$, by a computer search for positive integer values, we find two more solutions with $(\Sigma,\Delta)=(40,0)$ and $(232,0)$, which are correspond to $i=2,3$ in Lemma \ref{div4}.  The first possible solution with $\Sigma\equiv 2\pmod{4}$ has $(\Sigma,\Delta)=(434,18)$ and $n_{1,1,1}=115$.
\end{rmk}

\begin{theorem}
	 If $G=\langle(012)\rangle$ and  $\hN_{ij}^k < 21$, then $ \hS $  is  $$\begin{pmatrix}1 & d& 1+d &d^2-1\\
	 d&  -(1+d) &-1 &d^2-1\\
	 1+d&-1 & d&-(d^2-1)\\
	 d^2-1& d^2-1 &- (d^2-1) & 0
	 \end{pmatrix},$$
	 where $d$ is the largest real root of the polynomial $x^3 - 3x - 1=0$.
	 
\end{theorem}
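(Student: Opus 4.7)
The strategy parallels the preceding cases, combining Galois constraints with a bounded integer search. Since $\sigma=(012)$ fixes label $3$, Equation~(\ref{Galosactionsuper}) applied to column $3$ forces $\hS_{i,3}/d_3$ to be Galois-invariant, hence a rational algebraic integer, so $\hS_{i,3}=k_i d_3$ with $k_i\in\Z$ and $k_0=1$. Applying the same equation to columns $0,1,2$ yields $\hS_{i,1}=d_1\sigma(d_i)$ and $\hS_{i,2}=d_2\sigma^2(d_i)$, so in the self-dual setting $\hS$ takes the form
\[
\hS=\begin{pmatrix}
1 & d_1 & d_2 & d_3 \\
d_1 & d_1\sigma(d_1) & d_2\sigma^2(d_1) & k_1 d_3 \\
d_2 & d_1\sigma(d_2) & d_2\sigma^2(d_2) & k_2 d_3 \\
d_3 & k_1 d_3 & k_2 d_3 & k_3 d_3
\end{pmatrix},
\]
where $\hS=\hS^T$ supplies the identities $d_1\sigma(d_2)=d_2\sigma^2(d_1)$, $d_1\sigma(d_3)=k_1 d_3$, and $d_2\sigma^2(d_3)=k_2 d_3$.

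Next I would extract integer invariants. The matrix $\hN_1$ has integer entries and eigenvalues $\{d_1,\sigma(d_1),\sigma^2(d_1),k_1\}$, so its characteristic polynomial factors as $(x-k_1)(x^3-e_1 x^2+e_2 x-e_3)$ with $e_i\in\Z$; in particular $d_1$ satisfies a monic integer cubic. Analogously $d_2$ satisfies its own integer cubic with coefficients $f_i$. Iterating $\sigma$ on $d_1\sigma(d_3)=k_1 d_3$ together with $\sigma^3=1$ forces $k_1^3=e_3$, and a parallel relation ties $k_2$ to the $e_i,f_i$. The orthogonality $\hS^2=\tfrac{D^2}{2}I$ then supplies the scalar identities $1+k_1 d_1+k_2 d_2+k_3 d_3=0$ and $(1+k_1^2+k_2^2+k_3^2)d_3^2=1+d_1^2+d_2^2+d_3^2$, together with two further equations from the remaining column pairs.

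Finally, the hypothesis $\hN_{ij}^k<21$ bounds each $d_i$ (via $\hN_{ij}^k\leq d_i\leq 4\|\hN_i\|_{\max}$ from the excerpt), which in turn confines all the integer parameters $e_i,f_i,k_i,s_{33}$ to a finite range; the Frobenius--Schur indicator constraint of Lemma~\ref{FSS}, applied to each self-dual simple object, trims the search further. A Gr\"obner-basis computation in Maple, patterned on the $\langle(0123)\rangle$ case, then isolates the unique admissible tuple: $k_1=1$, $k_2=-1$, $k_3=0$, $s_{33}=0$, $d_1=d$, $d_2=1+d$, $d_3=d^2-1$ with $d$ the largest real root of $x^3-3x-1$; these coincide with the fusion rules of $\PSU(2)_7\boxtimes\sVec$. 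The principal obstacle is that two degree-$3$ algebraic numbers $d_1,d_2$ appear simultaneously, coupled by $\sigma(d_2)/d_2=\sigma^2(d_1)/d_1$ yet not identical, so the parameter space is substantially richer than in the $\langle(0123)\rangle$ case, and the bound $\hN_{ij}^k<21$ is precisely what guarantees that the resulting search terminates.
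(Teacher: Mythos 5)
Your setup is sound as far as it goes: the observations that $\hS_{i,3}=k_id_3$ with $k_i\in\Z$ (since $\sigma$ fixes the label $3$), that columns $1$ and $2$ are the Galois images of column $0$, that the Galois-stable cubic factors of the characteristic polynomials of $\hN_1,\hN_2$ have integer coefficients, and that $e_3=k_1^3$, are all correct necessary conditions, and your two orthogonality identities are right. The overall strategy---Galois constraints plus integrality plus a bounded search---is also the paper's strategy. But the proposal stops exactly where the real work begins. The paper's proof rests on a much tighter reduction that you never reach: the symmetry $\hS_{j,k}=\hS_{\sigma(j),\sigma^{-1}(k)}$ of Equation (\ref{hatSsym}) collapses the entire upper-left block to $\hS_{1,1}=\epsilon_1 d_2$, $\hS_{1,2}=\epsilon_2$, $\hS_{2,2}=\epsilon_4 d_1$ with $\epsilon_i=\pm1$; orthogonality then reduces the sign choices to a single case, and everything is funnelled through the one cubic $g(x)=x^3-cx^2+ncx+c$ satisfied by $d_3$, where $c_1=-c_3=c$ comes from orthogonality, $n=\hS_{33}/d_3$ is your $k_3$, and $t^2=\mathrm{disc}(g)/c^2$ must be a perfect square because the Galois group is cyclic of order $3$. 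Every naive fusion coefficient is then a closed-form rational expression in $(n,t,c)$, so the hypothesis $\hN_{ij}^k<21$ truncates an essentially two-parameter Diophantine family, and the search provably terminates at $(n,t,c)=(0,3,3)$.

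By contrast, your parametrization carries roughly nine coupled integer unknowns ($e_i$, $f_i$, $k_i$, plus the combinatorial choice of which conjugate $\sigma$ assigns to each $d_i$) with only loose relations among them; you never explicitly impose the constraints that do all the work in the paper, namely integrality and nonnegativity of the $\hN_{ij}^k$ (these are hidden inside the undefined word ``admissible''); you invoke the Frobenius--Schur indicator, which is neither needed nor used in this case; and the decisive step is the assertion that ``a Gr\"obner-basis computation \dots isolates the unique admissible tuple,'' which is neither set up in a form one could execute nor actually performed. For a statement whose entire content is ``this explicit finite search has exactly one solution,'' deferring the search to an unspecified computation over a vastly larger parameter space is a genuine gap: nothing in the proposal certifies uniqueness, and your own closing remark that the parameter space is ``substantially richer than in the $\langle(0123)\rangle$ case'' is a symptom that the reduction which makes the theorem provable has not been achieved.
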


\begin{proof}
	Applying Equation (\ref{hatSsym}) to $\sigma = (012)$, we get $$\hS=\begin{pmatrix}1 & d_1& d_2&d_3\\
	d_1& \epsilon_1 d_2 &\epsilon_2  &\epsilon_3 d_3\\
	d_2&\epsilon_2&\epsilon_4 d_1&\epsilon_5 d_3\\
	d_3&\epsilon_3 d_3 &\epsilon_5 d_3 & s_{33}
	\end{pmatrix}.$$
A computation using $\hS^2=\frac{D^2}{2} I$ and  $d_i\geq 1$ reduces the sign choices to the following 3 cases:
\begin{enumerate}
\item[(1)]$ \epsilon_3=\epsilon_4=-1, \epsilon_1=\epsilon_5=1, \epsilon_2=-1 $,

\item[(2)] $ \epsilon_3=\epsilon_4=1, \epsilon_1=\epsilon_5=-1, \epsilon_2=-1 $, or
\item[(3)] $ \epsilon_3=\epsilon_4=-1, \epsilon_1=\epsilon_5=-1, \epsilon_2=1 $.
\end{enumerate}
In case (3), we find that $d_3^2+d_1d_2-(d_1+d_2)=0$.  However, since $d_i\geq 1$, we have $d_3^2+d_1d_2 \geq 2$ and $-(d_1+d_2)\leq -2$. So, the equality holds if and only if $d_1+d_2 = 2 = d_3^2+d_1d_2$, which forces $d_1=d_2 = d_3=1$.
This is impossible since the Galois group is non-trivial by hypothesis.

Case (1) is equivalent to case (2) by permuting columns/rows $2$ and $3$ and relabeling $d_1\leftrightarrow d_2$.  
So, without loss of generality, we may assume we are in case (2). Let $g(x)=x^3-c_1x^2+c_2x-c_3  $ be an irreducible polynomial for which $ d_3 $
is a root. Note that $c_1=\dfrac{d_3}{d_1d_2}(d_1d_2+d_2-d_1)  $, $ c_2=\dfrac{d_3^2}{d_1d_2}(d_2-d_1-1)$, and $ c_3=-\dfrac{d_3^3}{d_1d_2} $. The orthogonality of the rows of $ \hS $ shows that $ c_1=-c_3 $. Moreover, $ \dfrac{c_2}{c_3}=-\lambda_{33}\in \mathbb{Z} $. Let $ n= \lambda_{33}$ and $c=-c_3=c_1$, so we have $ g(x) =x^3-cx^2+ncx+c$.  Since the Galois group is $ \mathbb{Z}_3 $, we have that $ \dfrac{dis(g)}{c^2}=c^2(n^2+4)-2nc(9+2n^2)-27$ is a square.\\
Take $ t $ to be the positive root of this, that is, $ t=\dfrac{(d_1-1)(d_1+d_2)(1+d_2)}{d_1d_2} $.\\

Notice that  $ c=\dfrac{d_3^3}{d_1d_2} >0 $. Moreover $t>0 $. Computing the fusion rules, we get
\begin{align*}
     n_{1,1,1} &=\dfrac{(t-nc-1)}{2}-\dfrac{t}{n^2+3} & n_{1,1,2}&=n_{1,3,3} =\dfrac{-c n+2 n^2+t-3}{2 \left(n^2+3\right)} \\
 n_{1,1,3}&= \dfrac{c n^2+2 c-n t+3 n}{2 \left(n^2+3\right)} &
n_{1,2,2}&=n_{2,3,3}= \dfrac{c n-2 n^2+t+3}{2 \left(n^2+3\right)}\\
n_{1,2,3}&= \dfrac{c-3 n}{n^2+3}&
n_{2,2,2}&=\dfrac{1+nc+t}{2}-\dfrac{t}{3+n^2}\\
n_{2,2,3}&=\dfrac{2c+3n+cn^2+nt}{2(3+n^2)} 
&n_{3,3,3}&=\dfrac{c+n^3}{n^2+3}
\end{align*}

If we restrict $n_{i, j, k}<21$, the only integer values of $n,t$ and $c$  that satisfy $t^2=c^2(n^2+4)-2nc(9+2n^2)-27$ and yield $n_{i, j, k}\in\Z$ is $(n,t,c)=(0,3,3)$. The corresponding $\hS$-matrix is the one given in the statement and is the same as that of $\PSU(2)_7$ (see \cite{RSW}).
\end{proof}

\begin{rmk}
Here is an alternative approach that is less computationally intensive, but assumes the minimal modular extension conjecture holds.  First notice that $c$ is a divisor of $\dim(\CC)$, so that if we assume the MME conjecture holds then, by the Cauchy theorem \cite{BNRW1}, any prime divisor $p$ of $c$ must divide the order $N$ of the $T$-matrix of any minimal modular extension of the corresponding super-modular category.  Now, by Lemma \ref{extension lemma}, we have $\varphi(N)=[\mathbb{Q}(T):\mathbb{Q}]=3\cdot 2^k$ since $|G|=3$.  Thus if $p\mid c$, we also have $\varphi(p)=2^a3^b$ where $b\in\{0,1\}$ and at most one prime divisor $p$ can have $3\mid\varphi(p)$.  Thus the prime divisors of $c$ are somewhat uncommon (for example Fermat primes).

For $n=0$, the discriminant equation above yields the Diophantine equation $(2c)^2-27=t^2$, which has finitely many solutions.  The only values of $c>0$ that correspond to a solution are: $3$ and $7$.
Since $n_{3,3,3}\in\Z$, when $n=0$ we have $3\mid c$. So $c=3$ which, in turn, implies $t=3$, giving the same solution as above.  So in this case we do not need to assume the MME conjecture.

For $n=1$ the Diophantine discriminant equation $5c^2-22c-27=t^2$ has infinitely many solutions, with the smallest few $c$ values: 
$$c\in\{7,31,199,1351,9247,63367,434311,2976799,20403271\}.$$
 The method above eliminates all of these values of $c$ except for $7$ (notice that $9\mid\varphi(1351)=2^73^2$).
 In the case that $c=7$, we find that $t=8$ which implies $n_{1,1,1}=-2$, so this cannot occur.
\end{rmk}

\begin{theorem}
If $G= \langle(01)(23)\rangle$ and $ d_i < 14 $ for all $ i $, then the corresponding $ \hS $ is one of the following:
$$\begin{pmatrix}1 & \phi_1^2& \phi_1&\phi_1\\
\phi_1^2& 1& -\phi_1  &-\phi_1\\
\phi_1&-\phi_1&-1&\phi_1^2\\
\phi_1&-\phi_1&\phi_1^2& -1\end{pmatrix},\quad\begin{pmatrix}1 &  \phi_2^2&  \phi_2& \phi_2\\
\phi_2^2& 1& - \phi_2  &- \phi_2\\
\phi_2&- \phi_2&-1& \phi_2^2\\
\phi_2&- \phi_2& \phi_2^2& -1\end{pmatrix},$$ 

$$\begin{pmatrix}1 &   \phi_1& 1 & \phi_1\\
\phi_1& -1& \phi_1  &-1 \\
1& \phi_1&-1&  -\phi_1\\
\phi_1&- 1&  -\phi_1& 1\end{pmatrix}, \quad\begin{pmatrix}1 &    \phi_2& 1 &  \phi_2\\
\phi_2& -1&   \phi_2  &-1 \\
1& \phi_2&-1&   -\phi_2\\
\phi_2&- 1&   -\phi_2& 1\end{pmatrix}. $$

\end{theorem}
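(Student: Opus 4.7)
The plan is to follow the pattern of Theorems \ref{norealizations} and \ref{Klein 4}. First I would apply Equation (\ref{hatSsym}), read modulo the sign ambiguity of Corollary \ref{supermodular s symmetry}, with $\sigma=(01)(23)$ to parametrize
$$\hS = \begin{pmatrix}
1 & d_1 & d_2 & d_3 \\
d_1 & \epsilon_1 & \epsilon_2 d_3 & \epsilon_3 d_2 \\
d_2 & \epsilon_2 d_3 & s_{22} & s_{23} \\
d_3 & \epsilon_3 d_2 & s_{23} & \epsilon_4 s_{22}
\end{pmatrix},$$
with $\epsilon_i\in\{\pm1\}$ and two auxiliary entries $s_{22},s_{23}$. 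This shape is forced by the requirement that the simultaneous row/column permutation by $\sigma$ preserve each orbit-block up to sign; the diagonal entries $\hS_{1,1}$, $\hS_{2,2}$, $\hS_{3,3}$ are $\sigma$-fixed algebraic integers, hence rational, and must in fact be $\pm 1$ on the $(1,1)$ entry.

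Next I would impose column orthogonality $\hS^2=\tfrac{D^2}{2}I$ and feed the resulting polynomial system to a Gr\"obner basis algorithm, exactly as in the earlier sign-elimination arguments. Together with the positivity $d_i\geq 1$, this should collapse the sixteen sign configurations to a small list and express $s_{22}, s_{23}$ as rational functions of $d_1, d_2, d_3$. I expect two qualitative branches to survive: one with $d_2=d_3$ and $d_1=d_2^2$ (producing the first pair of matrices in the statement), and one with $d_2=1$ and $d_3=d_1$ (producing the second pair). Invoking the Galois action via $\sigma(d_i)=\hS_{i,1}/d_1$, each non-rational dimension satisfies a quadratic whose other root is $\pm 1/d_1$, and Remark \ref{phi} then forces either the form $\phi_n$ (sign $-$, so $d_1=\phi_n$ in branch B) or a root of $x^2-mx+1$ (sign $+$, realized in branch A as $m=n^2+2$, i.e.\ $d_1=\phi_n^2$).

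Finally, mimicking the Frobenius--Schur indicator manipulation of Theorem \ref{Klein 4} on the self-dual simples $X_2$ and $X_3$ should yield an inequality of the form $(n-2)(\phi_n(n+1)+1)\leq 0$, restricting $n\in\{0,1,2\}$; the value $n=0$ trivializes the Galois group (already handled by Theorem \ref{trivial}) and is excluded, leaving $\phi_1$ and $\phi_2$ and hence exactly the four matrices listed. The hypothesis $d_i<14$ enters here to keep the integer search for $n$ (and for any rogue $m\geq 3$ in branch A that bypasses the FS inequality) finite, and a terminal check of integrality of the naive fusion rules $\hN_{ij}^k$ via Proposition \ref{prop for super}(d) confirms realizability. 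The hard part will be the sign-case bookkeeping: with four signs $\epsilon_i$, two auxiliary entries $s_{22}, s_{23}$, and three dimension unknowns, the weaker Galois symmetry here (only $\Z_2$ versus $\Z_2\times\Z_2$ in Theorem \ref{Klein 4}) leaves substantially more freedom, and one must resist the temptation to discard superficially non-prime branches; three of the four final possibilities factor as $\Sem\boxtimes\Fib\boxtimes\sVec$, $\Fib\boxtimes\Fib\boxtimes\sVec$, or $\Sem\boxtimes\PSU(2)_6$, but each is a legitimate rank $8$ super-modular category with Galois group $\langle(01)(23)\rangle$ and must be retained in the list.
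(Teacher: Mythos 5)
Your outline is essentially the paper's argument for the case $\epsilon_1=+1$: there the Gr\"obner computation yields $(s_{33}+1)(s_{23}-1)(s_{23}+1)=0$, the surviving sub-cases reproduce exactly the $\hS$-matrix of Theorem \ref{Klein 4} (with $d_1=d_2d_3$), and the second Frobenius--Schur indicator inequality $(m-2)(\phi_m(m+1)+1)\le 0$ restricts $(m,n)$ to $\{0,1,2\}^2$, of which only $(1,1)$ and $(2,2)$ have Galois group $\langle(01)(23)\rangle$ --- giving the first pair of matrices. No bound on $d_i$ is needed here, contrary to your remark about ``rogue $m\ge 3$''.

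The genuine gap is the branch $\epsilon_1=-1$, which produces the second pair of matrices and which your proposal compresses into ``Gr\"obner plus positivity should collapse to $d_2=1$, $d_3=d_1$, then run the FS inequality.'' Neither step works as described. In this branch orthogonality does \emph{not} express $s_{22},s_{23}$ as rational functions of the $d_i$ and does not force the shape you predict; the paper instead writes $d_1=\phi_n$, sets $\bbQ(\hS)=\bbQ(\xi)$ with $k\xi=\sqrt{n^2+4}$, expands every entry as $a+b\xi$ with half-integer coefficients, extracts $21$ Diophantine equations (one of which, $2b^2h-b^2k+2d^2h+d^2k=0$, pins $h\in(-\tfrac{k}{2},\tfrac{k}{2})$), and then runs a finite search over $(n,h,k)$ --- this is precisely where the hypothesis $d_i<14$ (hence $n\le 13$) is consumed. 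Only \emph{after} that search does one learn that all survivors have the shape with $d_2=1$, $d_3=\phi_n$. Moreover the elimination of $n=3,\dots,13$ is not done by a Frobenius--Schur inequality (which was derived for the Klein-four matrix shape and is not available here); it is done by Lemma \ref{semionlemma}: the invertible object with $\hS_{z,z}\ne 1$ forces a $\CC(\Z_2,Q)$ factor, so $\BB\cong\Sem\boxtimes\DD$ with $\DD$ of rank $4$, and the rank-$4$ classification of \cite{bruillard2017classification} leaves only $\phi_1$ and $\phi_2$. Finally, even in the surviving cases $(n,k,h)=(1,1,-\tfrac12)$ and $(2,2,-1)$ a free parameter $d$ remains, fixed only by integrality and non-negativity of $n_{3,3,3}$ resp.\ $n_{2,2,2}$; your ``terminal integrality check'' would need to be an actual step of the proof, not a confirmation. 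Without the half-integer parametrization and bounded search, you have no argument that other sign patterns or entry values in the $\epsilon_1=-1$ branch are impossible.
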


\begin{proof}
Similar as previous cases, we have $$\hS=\begin{pmatrix}1 & d_1& d_2&d_3\\
	d_1&\epsilon_1 & \epsilon_2 d_3  &\epsilon_3 d_2\\
	d_2&\epsilon_2 d_3&s_{22}&s_{23}\\
	d_3&\epsilon_3 d_2 &s_{23} &  s_{33}
	\end{pmatrix}.$$

		\textbf{Case} (1) $ \epsilon_1=1 $. Using Maple's Gr\"obner basis algorithm, we deduced that  \[ (s_{33}+1)(s_{23}-1)(s_{23}+1)=0.  \] First, we assume $ s_{33}+1=0 $, then we have $ s_{33}=s_{22}=-1 $, $ \epsilon_2=\epsilon_3=-1 $, $ \epsilon_1=1 $ and $ s_{23}=d_1=d_2d_3 $. Therefore the corresponding $\hS$ is given by
		$$\hS=\begin{pmatrix}1 & d_2d_3& d_2&d_3\\
		d_2d_3& 1& -d_3  &-d_2\\
		d_2&-d_3&-1&d_2d_3\\
		d_3&-d_2&d_2d_3& -1\end{pmatrix}. $$
		Notice that this is exactly the same matrix we derived in Theorem \ref{Klein 4}. But here we do not get a contradiction since the Galois group \emph{is} $\Z_2$.  Thus the same argument using the 2nd Frobenius-Schur indicator works here. Since the Galois group is $ \mathbb{Z}_2 $, we have solutions for $ S $-matrix when $(m,n) =(1,1), (1,0),(2,0)$ and $ (2,2)$, i.e. $(d_2,d_3)=(\phi_i,\phi_i)$ or $(\phi_i,1)$ for $i=1,2$.  The cases $(1,1)$ and $(2,2)$ yield the first two $\hS$-matrices above, while for $(2,0)$ and $(1,0)$ the Galois group $G\neq \langle (01)(23)\rangle$, a contradiction.  However, see Case 2 below where these solutions do occur. 
		
		If $ s_{23}-1=0 $, one can show that the corresponding Galois group is trivial.

		Now we assume $ s_{23}+1=0 $, then the matrix $ \hS $ has the form 
		$$\hS=\begin{pmatrix}1 & d_3^2& d_3&d_3\\
		d_3^2& 1& -d_3  &-d_3\\
		d_3&-d_3&d_3^2&-1\\
		d_3&-d_3&-1& d_3^2\end{pmatrix}. $$
		Notice this is the same matrix as the previous one if $ d_2=d_3 $ and permuting the matrices $ \hN_2 $ and $ \hN_3 $.
		
		\textbf{Case} (2) $ \epsilon_1=-1 $. In this case, the $ \hS $ is of the form $$\hS=\begin{pmatrix}1 & d_1& d_2&d_3\\
		d_1&-1& d_3 &- d_2\\
		d_2& d_3&s_{22}&s_{23}\\
		d_3&- d_2 &s_{23} & - s_{22}
		\end{pmatrix}.$$
		Notice that the conjugate of $d_1  $ is $ -\dfrac{1}{d_1} $. Moreover, we know that if $ d_1=1 $, then the corresponding Galois group is trivial. Thus the field $ \mathbb{Q}(\hS)=\mathbb{Q}(d_1) $, where $ d_1=\phi_n=\dfrac{n+\sqrt{n^2+4}}{2} $ for some $ n $. Now we assume $ k\sqrt{P}=\sqrt{n^2+4} $, where $k$ is an integer and $ P $ is a square-free integer. Then  $ d_1=\dfrac{n+k\xi}{2} $, where $ \xi=\sqrt{P} $. Then $ \mathbb{Q}(\hS)=\mathbb{Q}(\xi) $. As all the entries of $ \hS $ are algebraic integers, we can assume $ d_2=a+b\xi$, $ d_3=c+d\xi $, $ s_{22}=e+f\xi$, $ s_{23}=g+h\xi $, where $ a, b, c, d, e, f, g $ and $ h $ are either half integers or integers. Then using Maple's Gr\"obner basis algorithm to eliminate non-rational variables we obtain 21 Diophantine equations (over $\frac{1}{2}\Z$).

	Notice that $\hN_{12}^3=-1$ if $d=0$ or $2h-k=0$. 
One Diophatine equation we derive is: 	\[2b^2h-b^2k+2d^2h+d^2k=0,\] which can be written as $ \dfrac{b^2}{d^2}=-\dfrac{2h+k}{2h-k}$. So we have $(2h-k)(2h+k)\leq 0$, and since $k>0$, we see that $h\in(-\frac{k}{2},\frac{k}{2})$.  The condition $d_1<14$ implies $n\leq 13$ and $k\leq \sqrt{n^2+4}$, and $k$ is determined by $n$, so we do a brute force search for solutions using parameters $(n,h,k)$. There are 13 cases which pass the non-negative and integral condition of the naive fusion coefficients $\hN_{ij}^k$, which are the cases when $n=1,\ldots, 13$  and $h=-\frac{k}{2}$, for each $k$ corresponding to $n$. In fact, for these cases, the corresponding $\hS$ matrix has the following form:
	$$\begin{pmatrix}1 & \phi_n& 1&\phi_n\\
	\phi_n& -1&\phi_n &-1\\
	1&\phi_n&-1&-\phi_n\\\phi_n&-1&-\phi_n& 1\end{pmatrix}. $$

All the cases can be ruled out by Lemma \ref{semionlemma}  except when $(n,k,h)=(1,1,-\frac{1}{2})$ and $(n,k,h)=(2,2,-1)$. For the first case, we have $a=2 d,b=0,c=d, e=-1,f=0$, and $g=-\frac{1}{2}$. Then $n_{3,3,3} =2d-\frac{1}{2d} $, which is non-negative and integral. Thus $d=-\frac{1}{2}$ or $\frac{1}{2}$. Notice that $d_2=-1$ if $d=-\frac{1}{2}$, which is a contradiction. If $d=\frac{1}{2}$, the corresponding $\hS$-matrix has a modular realization as $\Fib\boxtimes \Sem$. 
For the second case, we have $n_{2,2,2}= d-\frac{1}{d}$. Thus $d=1$ and the corresponding $S$-matrix has a modular realization as $\PSU(2)_6\boxtimes\Sem$.  These are the second pair of $\hS$-matrices.

\end{proof}

\section{Fusion Rules}\label{fusion rules}

Recall that the naive fusion coefficients are defined as $\hN_{ij}^k=N_{ij}^k+N_{ij}^{fk}$, where $i,j,k\in\Pi_0$. To get the fusion coefficients $N_{ij}^k$ for the corresponding super-modular categories, we need to determine how these $\hN_{ij}^k$ split. Note that for the pointed cases, such as the ones in Theorem  \ref{non-selfdual} and Theorem \ref{trivial}, the corresponding super-modular categories split by Proposition \ref{pointedsplits}. Moreover, the $ \hS $ matrices in Theorem \ref{trivial} give the same naive fusion coefficients. From this discussion, we have the following results:
\begin{lem}
	If $ \BB $	is non-self dual super-modular category of rank 8, then $ \BB $ has the same fusion rules as $ \CC(\Z_4,Q)\boxtimes\sVec $ where $\CC(\Z_4,Q)$ is a pointed modular category with $\Z_4$ fusion rules.
\end{lem}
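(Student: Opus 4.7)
The plan is to exploit the fact that the $\hS$-matrix from Theorem \ref{non-selfdual} forces all quantum dimensions to be $1$, which makes $\BB$ pointed, and then use the non-self-duality to single out the $\Z_4$ fusion rules among the rank-$4$ pointed modular categories.

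First, I would invoke Theorem \ref{non-selfdual} to assert that, up to relabeling, the $\hS$-matrix of $\BB$ has the displayed form with first column $(1,1,1,1)^T$. Since $\hS_{0i}=d_i$ for $i\in\Pi_0$, this gives $d_i=1$ for every $i\in\Pi_0$. Because $\Pi_\BB=\Pi_0\sqcup f\Pi_0$ and $d_f=1$, every simple object of $\BB$ has dimension $1$, so $\BB$ is pointed.

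Next, by Proposition \ref{pointedsplits}, a pointed super-modular category splits as $\BB\simeq\CC\boxtimes\sVec$ with $\CC$ a pointed modular category. Counting ranks gives $|\CC|=4$. The classification of rank-$4$ pointed modular categories (equivalently, of pre-metric groups of order $4$) yields $\CC\simeq\CC(G,Q)$ with $G\in\{\Z_4,\Z_2\times\Z_2\}$, and the two possibilities are distinguished by their fusion rules.

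Finally, I would rule out $G=\Z_2\times\Z_2$ using non-self-duality: every simple object of $\CC(\Z_2\times\Z_2,Q)\boxtimes\sVec$ is self-dual, contradicting our hypothesis on $\BB$. Therefore $G=\Z_4$ and $\BB$ has the fusion rules of $\CC(\Z_4,Q)\boxtimes\sVec$. Alternatively, one could compare $\hS$-matrices directly: reordering the indices $\{0,1,2,3\}$ to $\{0,2,1,3\}$ turns the matrix in Theorem \ref{non-selfdual} into the standard character-table $S$-matrix $(i^{jk})_{j,k}$ of $\Z_4$, which via the Verlinde formula gives cyclic group fusion rules; the $\Z_2\times\Z_2$ character table is real, so the presence of $\pm i$ entries already excludes it. The only mild subtlety is making sure that ``fusion rules of $\CC(\Z_4,Q)\boxtimes\sVec$'' depends only on the group $\Z_4$ and not on the quadratic form $Q$, which is immediate since fusion rules of a pointed category are the group operation.
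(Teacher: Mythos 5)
Your proposal is correct and follows essentially the same route as the paper: Theorem \ref{non-selfdual} forces all entries of the first row of $\hS$ to be $1$, hence all $d_i=1$ and $\BB$ is pointed, Proposition \ref{pointedsplits} gives the splitting $\BB\simeq\CC\boxtimes\sVec$ with $\CC$ pointed modular of rank $4$, and non-self-duality excludes $\Z_2\times\Z_2$. The paper leaves most of these steps implicit in the paragraph preceding the lemma, so your write-up is simply a more explicit version of the same argument.
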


\begin{lem}
	If $ \BB $ is a self-dual super-modular category with Galois group $ G=\langle(0)\rangle $, then $ \BB$ has the same fusion rules as $ \DD\boxtimes \sVec $, where $ \DD $ is a Toric code modular category.
\end{lem}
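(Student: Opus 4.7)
The plan is to combine Theorem \ref{trivial} with Proposition \ref{pointedsplits} and then recognize the modular factor as the Toric code. By Theorem \ref{trivial}, the assumption $G=\langle(0)\rangle$ forces the fermionic-quotient matrix $\hS$ to be one of the two rational $\pm 1$-matrices listed there, both of which are $\Z_2\times\Z_2$ character tables up to a permutation of rows/columns. In particular, all simple objects in the fermionic quotient have $d_i=1$. Using the argument from Theorem \ref{trivial} (the ``Case 7'' reduction from \cite{RSW} carried over \emph{mutatis mutandis} that was invoked there), each simple object of $\BB$ itself must also be invertible. Hence $\BB$ is pointed.

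Now I would apply Proposition \ref{pointedsplits}: any pointed super-modular category has the form $\CC\boxtimes\sVec$ for some pointed modular $\CC$. Since $\BB$ has rank $8$ and $\sVec$ has rank $2$, the factor $\CC$ is a pointed modular category of rank $4$, i.e.\ a pointed modular category based on an abelian group of order $4$. Such categories are either $\CC(\Z_4,Q)$ or $\CC(\Z_2\times\Z_2,Q)$ for some non-degenerate quadratic form $Q$. Because $\BB$ is self-dual, the underlying group must be $\Z_2\times\Z_2$, ruling out $\CC(\Z_4,Q)$ (whose non-identity generators are not self-dual).

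Finally, to identify $\CC$ as a Toric code modular category, I would match fusion rules: any $\CC(\Z_2\times\Z_2,Q)$ realizes the Toric code fusion ring (the four simples satisfy $g^{\otimes 2}\cong\one$ with $\Z_2\times\Z_2$ multiplication), so the fusion rules of $\BB$ agree with those of $\DD\boxtimes\sVec$ with $\DD$ a Toric code modular category, regardless of which of the two $\hS$-matrices in Theorem \ref{trivial} is realized (they yield the same naive fusion coefficients, as noted in the paragraph preceding the lemma, and both correspond to $\Z_2\times\Z_2$-gradings). The only mild subtlety, and the part I would double-check carefully, is ensuring self-duality really excludes the $\Z_4$-pointed possibility and that the $\hS$-matrix data does not distinguish inequivalent quadratic forms at the level of fusion rules; both checks are straightforward since only the group structure of the pointed part enters the fusion ring.
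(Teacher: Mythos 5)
Your proposal is correct and follows essentially the same route as the paper: the paper also deduces pointedness from Theorem \ref{trivial} (via the \cite{RSW} Case 7 argument), splits off $\sVec$ using Proposition \ref{pointedsplits}, and notes that both $\hS$-matrices from Theorem \ref{trivial} yield the same ($\Z_2\times\Z_2$) naive fusion coefficients, identifying the modular factor's fusion rules with those of the Toric code. Your explicit remark that self-duality excludes the $\CC(\Z_4,Q)$ possibility is a harmless elaboration of what is already implicit in the real-valued $\hS$-matrices of Theorem \ref{trivial}.
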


\begin{lem}
	Let $ \BB $ be a self-dual super-modular category with $ \hS $ of the following form
		$$\begin{pmatrix}1 & 1& 2&\sqrt{6}\\
	1& 1&2 &-\sqrt{6}\\
	2&2&-2&0\\
	\sqrt{6}&-\sqrt{6}&0& 0\end{pmatrix}. $$
	
	Then $ \BB $ has the same fusion rules as the centralizer $\langle f\rangle^\prime$ of either fermion $f$ in the modular category $\SO(12)_2$ (see the Appendix).
\end{lem}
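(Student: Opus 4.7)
The plan is to pass from the given $\hS$-matrix to the naive fusion rules via the Verlinde formula, then lift the naive rules to genuine fusion rules on the $8$-element label set $\Pi_0\sqcup f\Pi_0$, showing that only one lift is consistent with the axioms of a super-modular category. First, I would apply Proposition \ref{prop for super}(d) with $D^2=\sum d_i^2=1+1+4+6=12$ to the matrix in the hypothesis. Because $\hS$ is real and its entries lie in $\bbQ(\sqrt{6})$, each Verlinde sum reduces to a rational integer, and one obtains a small table $\{\hN_{ij}^k\}_{i,j,k\in\Pi_0}$ already constrained by the symmetries of Corollary \ref{cyclicnaivefusion} and by $\hN_{ij}^0=\delta_{ij^\ast}$.

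Next, I would split each coefficient as $\hN_{ij}^k=N_{ij}^k+N_{ij}^{fk}$ into an unordered pair of nonnegative integers. These splittings must be simultaneously compatible with: (a) the cyclic symmetry of the full fusion rules together with self-duality (forcing $N_{ij}^{fk}=N_{ik^\ast}^{fj^\ast}$, as in the Remark following Corollary \ref{cyclicnaivefusion}); (b) associativity and commutativity of the whole $8\times 8$ fusion ring on $\{X_0,\ldots,X_3,fX_0,\ldots,fX_3\}$, i.e.\ $N_iN_j=N_jN_i$; and (c) transparency of $f$, which forces $N_{fi,j}^k=N_{ij}^{fk}$. A finite case analysis, carried out triple by triple, then narrows the possibilities to a single fusion ring. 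To eliminate the last combinatorial ambiguity I would feed the balancing equation of Corollary \ref{balancing},
$$\theta_i\theta_j\,\hS_{ij}=\sum_{k\in\Pi_0}(N_{ij}^k-N_{ij}^{fk})\theta_kd_k,$$
using the twists $\theta_i$ read off from $\hT$ (or from the twists in $\SO(12)_2$) to determine the sign of each $N_{ij}^k-N_{ij}^{fk}$.

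Finally, I would compare the resulting structure constants directly with those of $\langle f\rangle^\prime\subset \SO(12)_2$ (a rank-$8$ super-modular category with the same dimensions $(1,1,2,\sqrt{6})$ and $\hS$-matrix, as recorded in the Appendix). Since both rings satisfy the same constraints and the lift was shown to be unique, the fusion rules must coincide.

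The main obstacle is Step two: the naive fusion rules do not a priori determine the splitting, and several numerically admissible splittings can satisfy positivity and the cyclic symmetry. What saves the argument is that commutativity of all $N_i$ on the full $8$-object ring is a stringent quadratic constraint, and whatever remains after that is resolved by the balancing equation once the twists are fixed. The delicate point will be verifying that the twist data compatible with the given $\hS$ is sufficiently rigid to force a single sign pattern in Corollary \ref{balancing}; if not, one must argue by hand that the alternative candidate rings violate either the Frobenius--Schur indicator formula of Lemma \ref{FSS} or pentagon-level consistency.
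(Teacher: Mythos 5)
Your proposal follows essentially the same route as the paper: compute the naive fusion coefficients from $\hS$ via the Verlinde formula, use the $X_i\leftrightarrow fX_i$ relabeling freedom to normalize some splittings, invoke the balancing equation of Corollary \ref{balancing}, and finish with associativity (the paper computes $X_2\otimes X_2\otimes X_3$ two ways). The only refinement worth noting is that you need not worry about pinning down the twists from $\hT$: the single balancing-equation input required is at the entry $\hS_{23}=0$, where all surviving terms carry the common factor $\theta_3 d_3$ and the equation forces $N_{23}^3=N_{23}^{f3}=1$ independently of the twist values, after which associativity alone resolves the remaining splittings.
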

\begin{proof}
$\hN_{11}^1=\hN_{11}^2=\hN_{12}^3=\hN_{22}^3=\hN_{33}^3=0$,  $\hN_{12}^2=\hN_{13}^3=\hN_{22}^2=1$ and $\hN_{23}^3=2$.

We can assume that $N_{22}^2=1$ and $N_{22}^{f2}=0$ by interchanging $X_2$ and $fX_2$ if necessary. Similarly, we assume $N_{13}^3=1$ and $N_{13}^{f3}=0$ by interchanging $X_3$ and $fX_3$ and $X_1$ and $fX_1$ simultaneously, if needed. Using the modified balancing equation on $\hS_{23}$, we get $0=(N_{23}^3-N_{23}^{f3})\theta_3\sqrt{6}$. So we have $N_{23}^3=N_{23}^{f3}=1$. Now we have:
\begin{enumerate}
	\item $f^{\otimes2}=\one$,
	\item $X_1^{\otimes2}=\one$,
	\item $X_2^{\otimes 2}=\one\oplus aX_1\oplus bfX_1\oplus X_2$,
	\item $X_3^{\otimes 2}= \one\oplus X_1\oplus X_2\oplus fX_2$,
	\item $X_1\otimes X_2=aX_2\oplus b fX_2  $,
	\item $X_1\otimes X_3=X_3$,
	\item $X_2\otimes X_3=X_3\oplus fX_3$.
\end{enumerate}

Computing $X_2\otimes  X_2\otimes X_3$ in two ways gives us: $(2+a)X_3\oplus (b+1)fX_3=2X_3\oplus 2fX_3$. So we have $a=0$ and $b=1$.

\end{proof}

\begin{lem}
	Let $ \BB $ be a self-dual super-modular category with
	
	$$\hS=\begin{pmatrix}1 & \phi_1\phi_2& \phi_1&\phi_2\\
	\phi_1\phi_2& 1& -\phi_2  &-\phi_1\\
	\phi_1&-\phi_2&-1&\phi_1\phi_2\\
	\phi_2&-\phi_1&\phi_1\phi_2& -1\end{pmatrix}.$$
Then $ \BB $ has the same fusion rules as $\Fib\boxtimes \PSU(2)_6 $.
\end{lem}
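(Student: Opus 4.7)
The plan is to mirror the preceding $\SO(12)_2$ lemma: first extract the naive fusion coefficients $\hN_{ij}^k$ from the given $\hS$ via the Verlinde formula of Proposition \ref{prop for super}(d), and then lift them to actual fusion coefficients $N_{ij}^k$ via the freedom to swap $X_i\leftrightarrow fX_i$, the reciprocity of Corollary \ref{cyclicnaivefusion}, the balancing equation of Corollary \ref{balancing}, and associativity of triple products.

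A useful preliminary observation is that, after transposing the non-trivial indices $1\leftrightarrow 3$, the given $\hS$ equals the Kronecker product
\[
\begin{pmatrix}1&\phi_1\\ \phi_1&-1\end{pmatrix}\otimes\begin{pmatrix}1&\phi_2\\ \phi_2&-1\end{pmatrix},
\]
whose factors are the $S$-matrix of $\Fib$ and the $\hS$-matrix of the fermionic quotient of $\PSU(2)_6$, respectively. Since the naive fusion coefficients are determined by $\hS$ through the Verlinde formula, the naive fusion table of $\BB$ agrees term by term with that of the fermionic quotient of $\Fib\boxtimes\PSU(2)_6$. Labelling $\Pi_0=\{\one,X_1,X_2,X_3\}$ with $d_1=\phi_1\phi_2$, $d_2=\phi_1$, $d_3=\phi_2$, this identifies $X_1$ with $\tau\boxtimes Y$, $X_2$ with $\tau\boxtimes\one$, and $X_3$ with $\one\boxtimes Y$, where $\tau\in\Fib$ and $Y$ is the nontrivial simple of the fermionic quotient of $\PSU(2)_6$.

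To lift the $\hN_{ij}^k$, I would first absorb the multiplicity-one ambiguities by swapping $X_i\leftrightarrow fX_i$ where necessary, normalizing $N_{X_2 X_2}^{X_2}=1$, $N_{X_2 X_2}^{fX_2}=0$ and $N_{X_2 X_3}^{X_1}=1$, $N_{X_2 X_3}^{fX_1}=0$. The multiplicity-two entries appearing in $X_3\otimes X_3$, $X_1\otimes X_1\to X_1$, and $X_1\otimes X_1\to X_3$ (equivalently $X_1\otimes X_3\to X_1$ by cyclic symmetry) must each split as $(1,1)$: a $(2,0)$ or $(0,2)$ splitting is ruled out by substituting $\theta_{fk}=-\theta_k$ into the balancing equation $\theta_i\theta_j\hS_{ij}=\sum_k(N_{ij}^k-N_{ij}^{fk})\theta_k d_k$ and comparing with the reciprocity $N_{ij}^k=N_{ij^*}^{k^*}$ (using that $\BB$ is self-dual and $\hS$ is real). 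The remaining splittings are then pinned down by imposing associativity on the triple products $(X_2\otimes X_2)\otimes X_3$, $(X_1\otimes X_2)\otimes X_3$, and $(X_1\otimes X_1)\otimes X_2$, after which one reads off the fusion rules of $\Fib\boxtimes\PSU(2)_6$.

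The main obstacle will be the combinatorial bookkeeping for the three multiplicity-two slots inside $X_1\otimes X_1$ and $X_1\otimes X_3$: a careless normalization in the multiplicity-one step could a priori produce an inconsistent associativity system. However, since $\Fib\boxtimes\PSU(2)_6$ provides an explicit realization of these naive fusion rules, at least one consistent normalization exists, and it remains only to verify that any other consistent normalization differs from it by a further sequence of $X_i\leftrightarrow fX_i$ swaps. Once this is done, the fusion rules of $\BB$ coincide with those of $\Fib\boxtimes\PSU(2)_6$, completing the proof.
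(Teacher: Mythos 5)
Your outline is essentially correct, but it takes a noticeably more computational route than the paper. The paper's proof is very short: from $\hN_{22}^2=1$ one may normalize $N_{22}^2=1$, $N_{22}^{f2}=0$ by swapping $X_2\leftrightarrow fX_2$, so that $X_2^{\ot 2}=\one\oplus X_2$; then $X_2$ generates a Fibonacci subcategory, which is automatically \emph{modular}, so \cite[Theorem 3.13]{DGNO} gives $\BB\cong\FF\boxtimes\DD$ with $\DD$ a rank-$4$ super-modular category, and the rank-$\leq 6$ classification of \cite{bruillard2017classification} finishes the argument. This factorization completely sidesteps the multiplicity-two bookkeeping that occupies most of your proposal. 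Your Kronecker-product observation for $\hS$ is a nice (and correct) way to read off the naive fusion table, and your lifting strategy does work, but two points deserve care if you carry it out: (i) the balancing equation applied directly to $\hS_{11}$ does \emph{not} rule out $|N_{11}^1-N_{11}^{f1}|=2$ by itself, since $2\phi_1\phi_2$ need only be compared against $2+\phi_1+2\phi_2\approx 8.45$; you must first resolve the $\hN_{13}^1=2$ slot via $\hS_{13}$ (forcing $N_{13}^1=N_{13}^{f1}=1$, hence $N_{11}^3=N_{11}^{f3}=1$ by reciprocity) and only then return to $\hS_{11}$; and (ii) your closing claim that any consistent normalization differs from the realized one by further $X_i\leftrightarrow fX_i$ swaps is asserted rather than proved — it is exactly the step the paper's Deligne-product argument renders unnecessary. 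Neither issue is fatal, but the paper's route is both shorter and more robust, and it is worth internalizing as the standard trick whenever a naive fusion coefficient $\hN_{ii}^i=1$ produces a modular subcategory.
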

\begin{proof}
The naive fusion coefficients are: $\hN_{11}^1=\hN_{33}^3=2$, $\hN_{11}^2=\hN_{12}^3=\hN_{22}^2=1$,  $\hN_{12}^2=\hN_{13}^3=\hN_{22}^3=\hN_{23}^3=0$. As $\hN_{22}^2=N_{22}^2+N_{22}^{f2}=1$, we assume $N_{22}^2=1$ and  $N_{22}^{f2}=0$ by interchanging $X_2$ and $fX_2$ if necessary.  Then we have $X_2^{\otimes 2}=\one\oplus X_2$, so $X_2$ generates a subcategory $\FF$ with fusion rules like those of $\Fib$, which is necessarily modular. Therefore $\BB\cong \FF\boxtimes\DD$ where $\DD$ is a super-modular category of rank $4$(\cite[Theorem 3.13]{DGNO}). The classification results in \cite{bruillard2017classification} imply that $\BB$ has the same fusion rules as $\Fib\boxtimes\PSU(2)_6$.

\end{proof}

\begin{lem}
	Let $ \BB $ be a self-dual super-modular category with $ \hS $ of the following form
	$$ \begin{pmatrix}1 & d_1& d_2&d_3\\
	d_1&- d_2 &  d_3  &1\\
	d_2& d_3&-1&- d_1\\
	d_3&1 &- d_1 &  d_2
	\end{pmatrix},$$
	where $d_1=1+\sqrt{2}+\sqrt{2+\sqrt{2}}$, $d_2=1+\sqrt{2}+\sqrt{2(2+\sqrt{2})}$ and $d_3=1+\sqrt{2+\sqrt{2}}$.	Then $ \BB $ has the same fusion rules as $ \PSU(2)_{14} $.
\end{lem}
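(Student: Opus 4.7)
The plan is to first compute the naive fusion coefficients $\hN_{ij}^k$ directly from the given $\hS$-matrix via the Verlinde-type formula of Proposition \ref{prop for super}(d). A routine but lengthy symbolic computation with the dimensions $d_1,d_2,d_3$ (which all lie in the cyclotomic-type field $\bbQ(\sqrt{2+\sqrt2})$) should show that the resulting $\hN_{ij}^k$ match the naive fusion rules obtained from the centralizer of the fermion $7\varpi$ in $\SU(2)_{14}$, i.e., from $\PSU(2)_{14}$. This reduces the problem to lifting this naive data to actual fusion coefficients by determining the splitting $\hN_{ij}^k = N_{ij}^k + N_{ij}^{fk}$.

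Next, I would use the freedom to relabel $X_i \leftrightarrow fX_i$ for each $i\in\{1,2,3\}$ independently (holding $X_0=\one$ fixed) to normalize some of the splittings, following the pattern used for the $\SO(12)_2$ and $\Fib\boxtimes\PSU(2)_6$ cases above. Combined with the symmetries $\hN_{ij}^k = \hN_{ji}^k = \hN_{ik^\ast}^{j^\ast}$ and $N_{ij}^{fk}=N_{ik^\ast}^{f j^\ast}$ from Corollary \ref{cyclicnaivefusion}, together with self-duality, this reduces the number of independent unknowns considerably. Concretely, I expect that after such relabeling one can assume $N_{ii}^{i}$ is as large as possible for each of $i=1,2,3$.

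The main tool for pinning down the remaining splittings is the balancing equation of Corollary \ref{balancing},
\[\theta_i\theta_j\hS_{ij} = \sum_{k=0}^{3}(N_{ij}^k - N_{ij}^{fk})\,\theta_k d_k,\]
applied to each entry of $\hS$. For entries where $|\hS_{ij}|$ is comparable to or smaller than one of the summands $d_k$ on the right-hand side, the triangle inequality (combined with $|\theta_k|=1$) forces the integer coefficients $N_{ij}^k - N_{ij}^{fk}$ to take specific values; together with the sum condition $N_{ij}^k + N_{ij}^{fk}= \hN_{ij}^k$ this pins down the splittings one by one. Entries such as $\hS_{23}=-d_1$, $\hS_{13}=d_3$ and $\hS_{12}=d_2$, which are small relative to $d_1d_2$, $d_1d_3$ and $d_2d_3$ respectively, should yield the sharpest constraints, paralleling the computation for $\SO(12)_2$.

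The main obstacle I expect is handling entries where the balancing equation is not immediately decisive, i.e.\ where several $\theta_k d_k$ can interfere destructively. Here I would first restrict the twists $\theta_i$ using Lemma \ref{FSS} on second Frobenius--Schur indicators for the (self-dual) simple objects $X_1,X_2,X_3$, which provides equations of the form $\pm 1 = \frac{2}{D^2}\sum \hN_{jk}^i d_jd_k(\theta_j/\theta_k)^2$; this rules out most twist configurations and in particular forces the $\theta_i$ to coincide with the values coming from $\PSU(2)_{14}$. Any residual ambiguity between $N_{ij}^k$ and $N_{ij}^{fk}$ can then be eliminated by computing a triple product such as $X_i\otimes X_j\otimes X_k$ in two different orders and matching coefficients of $X_\ell$ against coefficients of $fX_\ell$, as in the $\SO(12)_2$ case. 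Comparing the resulting fusion table with the known fusion rules of $\PSU(2)_{14}$ completes the proof.
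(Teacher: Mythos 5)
Your proposal is correct and follows essentially the same route as the paper: compute the naive fusion coefficients from $\hS$, normalize splittings via the $X_i\leftrightarrow fX_i$ relabelings, apply the balancing equation of Corollary \ref{balancing} with the triangle inequality to the decisive entry $\hS_{23}=-d_1$, and finish by comparing coefficients in associativity computations such as $X_1\ot X_3\ot X_3$. The only difference is that the paper never needs the Frobenius--Schur indicator step you hold in reserve; the balancing equation plus associativity already pin down all splittings.
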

\begin{proof}
	The corresponding naive fusion coefficients are:$\hN_{11}^1=\hN_{11}^3=\hN_{12}^3=\hN_{13}^3=\hN_{33}^3=1$,  $\hN_{11}^2=\hN_{12}^2=\hN_{22}^2=\hN_{22}^3=2$ and $\hN_{23}^3=0$. 
	Since $\hN_{11}^1=N_{11}^1+N_{11}^{f 1}=1$, we can assume $N_{11}^1=1$ and $N_{11}^{f 1}=0$ by interchanging $X_1$ and $f X_1$ if necessary. Similarly, since $\hN_{33}^3=N_{33}^3+N_{33}^{f 3}=1$, we can assume $N_{33}^3=1$ and $N_{33}^{f 3}=0$. Finally, we may use the $X_2$ versus $fX_{2}$ labeling ambiguity to assume that $N_{13}^2=1$. We have:
	
		\begin{enumerate}
		\item $f^{\otimes2}=\one$,
		\item $X_1^{\otimes2}=\one\oplus X_1\oplus aX_2\oplus b fX_2\oplus c X_3\oplus d fX_3$, where $a+b=2$, $c+d=1$,
		\item $X_2^{\otimes2}=\one\oplus g X_1\oplus h  fX_1\oplus lX_2\oplus m fX_2\oplus p X_3\oplus q fX_3$, where $g+h=2$, $l+m=2$ and $p+q=2$,
		\item $X_3^{\otimes2}=\one\oplus r X_1\oplus s   fX_1\oplus  X_3$, where $r+s=1$,
		\item $X_1\otimes X_2= a X_1\oplus  b fX_1\oplus g X_2\oplus h fX_2\oplus  X_3 $,
		\item $X_1\otimes X_3=cX_1\oplus d fX_1\oplus  X_2\oplus rX_3\oplus s fX_3$,
		\item $X_2\otimes X_3=  X_1\oplus p X_2\oplus q fX_2$. 
	\end{enumerate}
	Computing $X_1\otimes X_3\otimes X_3$ in two ways and comparing the coefficients of $X_1$, $fX_1$, $X_2$ and $fX_2$, we have $c+r=2$, $d+s=0$, $ar+bs+1=c+p$ and $br+as=d+q$. Thus we have $c=r=1$, $d=s=0$, $a=p$ and $ b=q$. Applying Corollary \ref{balancing} to $\hS_{23}$, we have
	$|d_1|=|d_1\theta_1 + (p-q)d_2\theta_2|\geq ||(p-q)d_2\theta_2|-d_1|$. If $|p-q|=2$, then $4.26\approx d_1\geq |2d_2-d_1|\approx 5.79$, which is impossible. So we have $p=q=1$. Therefore $a=b=1$. Computing $X_2\otimes X_3\otimes X_3$ in two different ways and comparing the coefficients of $X_2$ and $fX_2$, we have $g=h=1$. Tensoring $X_2\otimes X_2\otimes X_3$ in two ways and comparing the coefficients of $X_1$ and $fX_1$, we have $l=1$ and $m=1$.
\end{proof}
\begin{lem}
	Let $ \BB $ be a self-dual super-modular category with
	$$\hS=\begin{pmatrix}1 & d& 1+d &d^2-1\\
	d&  -(1+d) &-1 &d^2-1\\
	1+d&-1 & d&-(d^2-1)\\
	d^2-1& d^2-1 &- (d^2-1) & 0
	\end{pmatrix},$$
	where $d$ is the largest real root of $x^3 - 3x - 1=0$. Then $ \BB $ has the same fusion rules as $ \PSU(2)_7\boxtimes \sVec $.
\end{lem}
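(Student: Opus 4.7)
The plan is to mirror the strategy of the preceding lemmas: use the Verlinde formula to compute the naive fusion coefficients from $\hS$, exploit the freedom to swap $X_i\leftrightarrow fX_i$ to normalize the fusion rules involving $X_1$, and then use associativity of the tensor product to pin down the remaining $f$-twists.

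First, applying Proposition \ref{prop for super}(d) to the given $\hS$-matrix (which coincides with the $S$-matrix of $\PSU(2)_7$), I would verify that the only nonzero naive fusion coefficients (modulo the cyclic symmetries of Corollary \ref{cyclicnaivefusion}) are
\[\hN_{11}^3=\hN_{12}^2=\hN_{12}^3=\hN_{13}^2=\hN_{22}^1=\hN_{22}^2=\hN_{22}^3=\hN_{23}^3=\hN_{33}^2=\hN_{33}^3=1\]
together with the duality entries $\hN_{ii}^0=1$. These match the fusion rules of $\PSU(2)_7$, and since each nonzero value equals $1$, for each such triple exactly one of $N_{ij}^k$ or $N_{ij}^{fk}$ equals $1$ and the other vanishes.

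Second, since $X_1$ is self-dual, $\one\subset X_1^{\otimes 2}$, so $X_1\otimes X_1=\one\oplus Z$ for some $Z\in\{X_3,fX_3\}$. Relabeling $X_3\leftrightarrow fX_3$ if necessary, I may assume $X_1\otimes X_1=\one\oplus X_3$. Frobenius reciprocity then forces $X_1\subset X_1\otimes X_3$, and a further swap $X_2\leftrightarrow fX_2$ normalizes to $X_1\otimes X_3=X_1\oplus X_2$, whence another application of Frobenius reciprocity gives $X_1\otimes X_2=X_2\oplus X_3$.

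Third, I would parametrize the remaining binary choices by $\{\one,f\}$-valued parameters
\[X_2^{\otimes 2}=\one\oplus\alpha X_1\oplus\beta X_2\oplus\gamma X_3,\quad X_2\otimes X_3=\delta X_1\oplus\epsilon X_2\oplus\zeta X_3,\quad X_3^{\otimes 2}=\one\oplus\rho X_2\oplus\sigma X_3.\]
Equating $(X_1\otimes X_2)\otimes X_2=X_1\otimes (X_2^{\otimes 2})$ forces $\alpha=\one$, $\gamma=\delta=\epsilon$, and the multiset equality $\{\one,\beta\}=\{\gamma,\zeta\}$, while $(X_1\otimes X_3)\otimes X_3=X_1\otimes (X_3^{\otimes 2})$ gives $\delta=\sigma$, $\zeta=\rho$, and $\{\one,\epsilon\}=\{\rho,\sigma\}$. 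A short case analysis on $\epsilon\in\{\one,f\}$ leaves only two surviving candidates: the trivial one with all parameters equal to $\one$, and a twisted one with $\alpha=\rho=\zeta=\one$ and $\beta=\gamma=\delta=\epsilon=\sigma=f$. The twisted candidate is eliminated by expanding $X_2\otimes X_2\otimes X_3$ two ways: the $X_1$-isotypic component of $(X_2^{\otimes 2})\otimes X_3$ computes to $2X_1$, while that of $X_2\otimes(X_2\otimes X_3)$ computes to $2fX_1$, a contradiction.

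Only the trivial solution remains, and it gives exactly the fusion rules of $\PSU(2)_7$ on the subring spanned by $\{\one,X_1,X_2,X_3\}$; the remaining products follow by multiplication with $f$, producing the fusion rules of $\PSU(2)_7\boxtimes\sVec$. I expect the main obstacle to be the bookkeeping in step three, since the twisted candidate passes two of the three natural associativity checks, so one must choose the correct third check to uncover the contradiction.
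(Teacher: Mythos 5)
Your overall strategy---compute the naive fusion coefficients from $\hS$, use the $X_i\leftrightarrow fX_i$ relabeling freedom to normalize, then pin down the remaining $\{\one,f\}$-valued parameters by associativity---is exactly the paper's approach, and your naive fusion coefficients, your associativity constraints, and your elimination of the ``twisted'' candidate via the $X_1$-isotypic component of $X_2\otimes X_2\otimes X_3$ all check out. (The paper normalizes slightly differently, taking $N_{22}^2=N_{33}^3=N_{22}^1=1$, and then needs only the associativity step; your twisted candidate could also have been killed immediately by Frobenius reciprocity, since $N_{23}^{1}=N_{13}^{2}=1$ forces $\delta=\one$.)

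There is, however, one genuinely wrong step in your second paragraph: the claim that ``another application of Frobenius reciprocity gives $X_1\otimes X_2=X_2\oplus X_3$.'' Frobenius reciprocity applied to $N_{13}^{2}=1$ only yields $N_{12}^{3}=1$, i.e.\ the summand $X_3$. Whether the other summand is $X_2$ or $fX_2$ is the statement $N_{12}^{2}=1$ versus $N_{12}^{f2}=1$, which by Frobenius reciprocity is equivalent to $N_{22}^{1}=1$ versus $N_{22}^{f1}=1$ --- precisely your undetermined parameter $\alpha$ --- so it is not forced by anything you have established at that point. Indeed, the \emph{simultaneous} relabeling $X_1\mapsto fX_1$, $X_2\mapsto fX_2$ preserves both of your earlier normalizations $N_{11}^{3}=1$ and $N_{13}^{2}=1$ while toggling $N_{12}^{2}\leftrightarrow N_{12}^{f2}$, which shows both that the claim cannot be a consequence of those normalizations and, fortunately, that it \emph{is} achievable without loss of generality: you still have one unused binary relabeling degree of freedom, and this double swap is exactly how to spend it. With that correction the argument is complete, but as written the step is circular, since your later deduction ``associativity forces $\alpha=\one$'' silently relies on having already fixed $X_1\otimes X_2=X_2\oplus X_3$.
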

\begin{proof}
We have $\hN_{11}^1=\hN_{11}^2=\hN_{13}^3=0$ and $\hN_{11}^3=\hN_{12}^2=\hN_{12}^3=\hN_{22}^2=\hN_{22}^3=\hN_{23}^3=\hN_{33}^3=1$.

Notice that since $\hN_{22}^2=N_{22}^2+N_{22}^{f 2}=1$, we can assume $N_{22}^2=1$ and $N_{22}^{f 2}=0$ by interchanging $X_2$ and $f X_2$ if necessary. Similarly, we can assume $N_{33}^3=1$, $N_{33}^{f 3}=0$, $\hN_{22}^1=1$ and $\hN_{22}^{f1}=0$. We have  

	\begin{enumerate}
		\item $f^{\otimes2}=\one$,
		\item $X_1^{\otimes2}=\one\oplus aX_3\oplus bfX_3$, where $a+b=1$,
		\item $X_2^{\otimes 2}=\one\oplus X_1\oplus  X_2\oplus gX_3\oplus hfX_3$, where  $g+h=1$,
		\item $X_3^{\otimes 2}= \one\oplus lX_2\oplus mfX_2\oplus X_3 $, where $l+m=1$,
		\item $X_1\otimes X_2=X_2\oplus  pX_3\oplus q fX_3$, where   $p+q=1$,
		\item $X_1\otimes X_3= aX_1\oplus bfX_1\oplus p  X_2\oplus q fX_2$,
		\item $X_2\otimes X_3=pX_1\oplus qfX_1\oplus gX_2+hfX_2\oplus lX_3\oplus mfX_3 $.
	\end{enumerate}  
	
	Computing $X_1\otimes X_1\otimes X_2$ in two different ways and comparing the coefficients of $X_2$ and $fX_2$, we have $ag+bh=1$, $bg+ah=0$. Thus we have $a=g$ and $b=h$. Similarly, comparing the coefficients of $X_3$ and $fX_3$ in $X_1\otimes X_1\otimes X_3$ gives us $a=1$ and $b=0$. Computing $X_2\otimes X_2\otimes X_3$ and comparing the coefficients of $X_3$ and $fX_3$, we have $l=1$ and $m=0$.  Computing $X_1\otimes X_3\otimes X_3$ in two different ways and comparing the coefficients for $X_2$ and $fX_2$, we have $p=1$ and $q=0$.  Observing that the simple objects $\one, X_1,X_2$ and $X_3$ generate a fusion subcategory with the same fusion rules as $\PSU(2)_7$ we obtain the stated result.
\end{proof}

\begin{lem}
	Let $ \BB $ be a self-dual super-modular category. Suppose that the corresponding $ \hS $ has one  of the following forms
$$\begin{pmatrix}1 & \phi_1^2& \phi_1&\phi_1\\
\phi_1^2& 1& -\phi_1  &-\phi_1\\
\phi_1&-\phi_1&-1&\phi_1^2\\
\phi_1&-\phi_1&\phi_1^2& -1\end{pmatrix},\quad\begin{pmatrix}1 &  \phi_2^2&  \phi_2& \phi_2\\
\phi_2^2& 1& - \phi_2  &- \phi_2\\
\phi_2&- \phi_2&-1& \phi_2^2\\
\phi_2&- \phi_2& \phi_2^2& -1\end{pmatrix}$$ 

$$\begin{pmatrix}1 &   \phi_1& 1 & \phi_1\\
\phi_1& -1& \phi_1  &-1 \\
1& \phi_1&-1&  -\phi_1\\
\phi_1&- 1&  -\phi_1& 1\end{pmatrix}, \quad\begin{pmatrix}1 &    \phi_2& 1 &  \phi_2\\
\phi_2& -1&   \phi_2  &-1 \\
1& \phi_2&-1&   -\phi_2\\
\phi_2&- 1&   -\phi_2& 1\end{pmatrix}, $$
	then $ \BB $ has the same fusion rules as  $\Fib\boxtimes\Fib\boxtimes\sVec$, $[\PSU(2)_6 \boxtimes\PSU(2)_6]_{\Z_2}$, $\Sem\boxtimes\PSU(2)_6\boxtimes\sVec$, or $\Sem\boxtimes\Fib\boxtimes\sVec$, respectively.
\end{lem}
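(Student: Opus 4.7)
The plan is to first convert each $\hS$-matrix to its table of naive fusion coefficients $\hN_{ij}^{k}$ via Proposition \ref{prop for super}(d), then lift the identities $\hN_{ij}^{k} = N_{ij}^{k} + N_{ij}^{fk}$ to genuine fusion rules using three tools in combination: the $X_i \leftrightarrow fX_i$ labeling freedom (which lets us normalize one of $N_{ii}^{i},N_{ii}^{fi}$ to zero when $\hN_{ii}^{i}=1$, and similarly for selected off-diagonal entries), the balancing equation of Corollary \ref{balancing} (which constrains the signed difference $N_{ij}^{k} - N_{ij}^{fk}$ through the $\hS$-entries and twist values), and associativity obtained by evaluating triple products $X_i \otimes X_j \otimes X_k$ in two orders. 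In each case, I would finally compare the resulting candidate fusion rules with those of the alleged realization to verify they match.

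Three of the four $\hS$-matrices should produce super-modular categories that split as Deligne products, and here the strategy is to exhibit a non-degenerate braided fusion subcategory and invoke $\BB \cong \DD \boxtimes \DD'$ from \cite[Theorem 3.13]{DGNO}, reducing the problem to the classifications of rank-$\leq 6$ super-modular \cite{bruillard2017classification} or rank-$\leq 4$ modular \cite{RSW} categories. For the first $\hS$ (entries in $\phi_1$), I expect to read off $\hN_{22}^{2}=\hN_{33}^{3}=1$; after label normalization these force Fibonacci fusion $X_i \otimes X_i = \one \oplus X_i$ for $i=2,3$, giving two commuting Fibonacci subcategories and hence $\Fib \boxtimes \Fib \boxtimes \sVec$. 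For the third and fourth matrices (dimensions $1,\phi_i,1,\phi_i$ with $\hS_{2,2} = -1$), Lemma \ref{semionlemma} directly produces a modular Semion subcategory; its centralizer is a rank-$4$ super-modular category whose unique simple of dimension $\phi_i$ identifies it as $\Fib \boxtimes \sVec$ or $\PSU(2)_6$.

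The prime case, namely the second $\hS$-matrix corresponding to $[\PSU(2)_6 \boxtimes \PSU(2)_6]_{\Z_2}$, requires a direct lifting since there is no factorization. Here I would proceed by first fixing $X_2$ versus $fX_2$ and $X_3$ versus $fX_3$ so that $N_{22}^{2}$ and $N_{33}^{3}$ concentrate in a single summand, and then apply the balancing equation to each remaining off-diagonal $\hS$-entry. Because the non-trivial twists $\theta_i$ bound the magnitudes involved, the balancing equation should in most cases force $N_{ij}^{k} - N_{ij}^{fk}$ to be too small (in absolute value) to allow any splitting other than the one dictated by the target fusion rules. Residual ambiguities are then eliminated by comparing triple tensor products of $X_i \otimes X_j \otimes X_k$ in two orders and matching coefficients.

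I expect the main obstacle to be precisely the prime case: the absence of any Deligne factorization forces me to lift every coefficient directly, and the several independent $X_i \leftrightarrow fX_i$ label swaps must be propagated consistently through the balancing equation and associativity constraints. In the split cases the only subtlety is ruling out spurious alternative factorizations when two candidate subcategories of similar type coexist (e.g., two dimension-one simples could in principle produce either $\Sem \boxtimes \Sem$ or a single $\Sem$ in a different configuration); this is handled by computing $\hS$ on the proposed subcategory and checking non-degeneracy via Bruguières' criterion.
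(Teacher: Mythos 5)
Your proposal follows essentially the same route as the paper: naive fusion coefficients from the Verlinde-type formula, Deligne factorization through a Fibonacci (resp.\ semion, via Lemma \ref{semionlemma}) subcategory combined with the lower-rank classifications of \cite{bruillard2017classification,RSW} for the first, third and fourth matrices, and a direct lift using label normalization, the balancing equation of Corollary \ref{balancing}, and associativity for the prime second matrix. The one imprecision is your claim that relabeling makes $N_{22}^{2}$ and $N_{33}^{3}$ ``concentrate in a single summand'' in the prime case: there $\hN_{22}^{2}=\hN_{33}^{3}=2$, so the swap $X_i\leftrightarrow fX_i$ only exchanges $(N_{ii}^{i},N_{ii}^{fi})$ and cannot rule out the (in fact correct) split $(1,1)$; as in the paper, which instead normalizes $N_{12}^{3}=1$, it is the balancing equation that forces $N_{22}^{2}=N_{22}^{f2}=N_{33}^{3}=N_{33}^{f3}=1$ --- a step your own plan already contains as a fallback.
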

\begin{proof}
	
Consider the first $\hS$-matrix. We have $\hN_{11}^1=\hN_{11}^2=\hN_{11}^3=\hN_{12}^3=\hN_{22}^2=\hN_{33}^3=1$ and $\hN_{12}^2=\hN_{13}^3=\hN_{22}^3=\hN_{23}^3=0$. 	Without loss of generality, we may assume $N_{22}^2=1$, $N_{22}^{f2}=0$ by interchanging $X_2$ and $fX_2$ if necessary. Thus $X_2^{\otimes 2}=\one\oplus X_2$, so $X_2$ generates a subcategory $\FF$ with fusion rules like those of $\Fib$, which is necessarily modular. In particular $\BB\cong \FF\boxtimes\DD$, where $\DD$ is a super-modular category of rank $4$.  The classification results of \cite{bruillard2017classification} now imply that $\BB$ has the same fusion rules as $\Fib\boxtimes\Fib\boxtimes\sVec$.

For the second $\hS$-matrix, we have that the associated naive fusion coefficients are $\hN_{11}^1=4$, $\hN_{11}^2=\hN_{11}^3=\hN_{22}^2=\hN_{33}^3=2$, $\hN_{12}^3=1 $, $\hN_{12}^2=\hN_{13}^3=\hN_{22}^3=\hN_{23}^3=0$.
We may assume $N_{12}^3=1$ and $N_{12}^{f3}=0$ by interchanging $X_3$ and $fX_3$ if necessary. Using Corollary \ref{balancing} on $\hS_{12}$ gives
$$-\theta_1\theta_2\phi_2=(N_{12}^1-N_{12}^{f1})\phi_2^2\theta_1+\phi_2\theta_3.$$
Dividing by $\phi_2$, we have 
$$-\theta_1\theta_2=(N_{12}^1-N_{12}^{f1})\phi_2\theta_1+\theta_3.$$
Taking absolute value on both sides, we get
$$1=\bigl|(N_{12}^1-N_{12}^{f1})\phi_2\theta_1+\theta_3\bigr|\ge \bigl||(N_{12}^1-N_{12}^{f1})\phi_2|-1\bigr|.$$
So we must have $N_{12}^1=N_{12}^{f1}=1$. Similarly, applying Corollary \ref{balancing} to $\hS_{33}$ and $\hS_{13}$ gives\\
$$-\theta_3^2=1+(N_{33}^3-N_{33}^{f3})\phi_2\theta_3,\quad -\theta_1\theta_3\phi_2=(N_{13}^1-N_{13}^{f1})\phi_2^2+\phi_2\theta_2$$
and we get $N_{33}^3=N_{33}^{f3}=1$ and $N_{11}^3=N_{11}^{f3}=1$.  A parallel calculation for $\hS_{22}$ yields $N_{22}^2=N_{22}^{f2}=1$.
By using Corollary \ref{balancing} again for $\hS_{11}$, we get 	$$\theta_1^2=(N_{11}^1-N_{11}^{f1})\phi_2^2\theta_1+1.$$
The potential choices of $(N_{11}^1,N_{11}^{f1})$ are $(2,2)$, $(4,0)$, $(0,4)$, $(1,3)$ and $(3,1)$, but since $\phi_2^2>2$ the only possibility is $(2,2)$. This category has the same fusion rules as $[\PSU(2)_6\boxtimes\PSU(2)_6]_{\Z_2}$, see the Appendix.

In the last two cases, observe that $\BB$ must contain a modular subcategory of the form $\CC(\Z_2,Q)$ by Lemma \ref{semionlemma}. Then $\BB\cong\CC(\Z_2,Q)\boxtimes\DD$, where $\DD$ is a rank $4$ super-modular category.  The result now follows from the classification in \cite{bruillard2017classification}.

\end{proof}

\section*{Appendix}
Here we record the data for some of the realizations of the super-modular categories that appear in this article, both modular and super-modular, as well as the families of categories in which they reside.  We write the $T$-matrix as an $n$-tuple with the understanding that these are the diagonal entries.

\subsection{Pointed Modular Categories}
Pointed braided fusion categories are classified, see \cite{DGNO}.  They correspond to pairs $(A,Q)$, where $Q$ is a  symmetric quadratic form on $A$ (with values in $U(1)$).  The fusion rules of $\CC(A,Q)$ are the same as the multiplication in $A$, and the $S$- and $T$-matrices are determined by $Q$ as follows: $S_{a,b}=\frac{Q(a+b)}{Q(a)Q(b)}$ and $\theta_a=Q(a)$. If the symmetric bilinear form given by $S_{a,b}$ is non-degenerate then $\CC(A,Q)$ is modular. 

For example the semion theory $\Sem=\CC(\Z_2,Q)$ that appears in our classification has the following modular data:
$S=\begin{pmatrix}
1 & 1\\1 & -1
\end{pmatrix}$, and $T=(1,i)$.

\subsection{$\PSU(2)_k$}
The rank $k+1$ modular category $\SU(2)_k$ obtained from $U_q\mathfrak{sl}_2$ at $q=e^{\pi i/(2+k)}$ contains the subcategory $\PSU(2)_k$ whose simple objects have even labels (``integer spin" in the physics literature). Denote by $\varpi$ the fundamental weight of type $A_1$, so that $X_\varpi$ tensor generates $\SU(2)_k$. The object labeled by $\frac{k}{2}\varpi$ is always invertible.  When $k\equiv 2\pmod{4}$ the category $\PSU(2)_k$ is super-modular with $f=X_{\frac{k}{2}\varpi}$, when $4\mid k$, there is a boson $b=X_{\frac{k}{2}\varpi}$ in $\PSU(2)_k$, and when $k$ is odd, $\PSU(2)_k$ is modular, with $X_{\frac{k}{2}\varpi}$ a semion (not in $\PSU(2)_k$.)

The (modular) Fibonacci theory Fib $=\PSU(2)_3^{rev}$ as well as $\PSU(2)_7$ appear in our classification, and the data can be found in \cite{RSW}.

Some low rank super-modular categories that appear in this article are:
\begin{itemize}
    \item $\PSU(2)_6$ with data:

$S=\begin{pmatrix}
1 & 1+\sqrt{2} \\
1+\sqrt{2} & -1 
\end{pmatrix}\ot \begin{pmatrix} 1 & 1\\ 1 & 1\end{pmatrix}$ and $T=(1,i)\ot (1,-1)$.  
\item $\PSU(2)_{10}$ with data:

$S=\begin{pmatrix}
1 & 2+\sqrt{3} & 1+\sqrt{3}\\
2+\sqrt{3} & 1 & -1-\sqrt{3}\\
1+\sqrt{3} & -1-\sqrt{3} & 1+\sqrt{3}
\end{pmatrix}\ot \begin{pmatrix} 1 & 1\\ 1 & 1\end{pmatrix}$ and $T=(1,-1,e^{\pi i/3})\ot (1,-1)$. 
\item $\PSU(2)_{14}$ with data: 

$S=\begin{pmatrix}
1 & 1+x & 1+\sqrt{2}+x & 1+\sqrt{2} + \sqrt{2}x\\
1+x & 1+\sqrt{2} + \sqrt{2}x & 1 & -1-\sqrt{2} -x\\
1+\sqrt{2}+x & 1 & -1-\sqrt{2}-\sqrt{2}x & 1+x\\
1+\sqrt{2}+\sqrt{2}x & -1-\sqrt{2} -x & 1+x & -1
\end{pmatrix}\ot \begin{pmatrix} 1 & 1\\ 1 & 1\end{pmatrix}$, where
$x=\sqrt{2+\sqrt{2}}$ and $T=(1, e^{i\pi/4},e^{3i\pi/4},-i)\ot (1,-1)$.
\end{itemize}

The full sequence of super-modular categories $\PSU(2)_{4m+2}$ was studied in \cite{16fold,bonderson2018congruence}, where the modular data can be found.  If we order the simple objects $[\one,X_1,\ldots,X_{r-1},fX_{r-1},\ldots,fX_1,f]=[Y_0,\ldots,Y_{2(r-1)}]$ the fusion rules are completely determined by the rule $Y_1\ot Y_k\cong Y_{k-1}\oplus Y_k\oplus Y_{k+1}$ for $0<k<2(r-1)$ and the obvious rules involving $Y_{2(r-1)}=f$ and $Y_0=\one$.

\subsection{Other examples}
The following are spin modular categories coming from quantum groups with fermion $f$ so that the subcategory $\langle f\rangle^\prime$ is super-modular, where $r,m\in\mathbb{N}$:
\begin{itemize}
\item $\SU(4k+2)_{4m+2}$, 
\item $\SO(2k+1)_{2m+1}$, 
\item $Sp(2r)_m$ with $rm=2 \pmod{4}$,
\item  $\SO(2r)_m$ with $r=2 \pmod{4}$ and $m=2 \pmod{4}$,
\item  $(E_7)_{4m+2}$.
\end{itemize}
The pointed sub-category of the rank $13$ modular category $\SO(12)_2$ is $\sVec\boxtimes\sVec$ and hence contains two fermions labeled by $2\varpi_5$ and $2\varpi_6$, where $\varpi_i$ are the fundamental weights of type $D_6$.  The centralizer of either of these fermions is super-modular and has modular data:

$S:=\begin{pmatrix}1 & 1& 2&\sqrt{6}\\
	1& 1&2 &-\sqrt{6}\\
	2&2&-2&0\\
	\sqrt{6}&-\sqrt{6}&0& 0\end{pmatrix}\otimes \begin{pmatrix} 1 & 1\\ 1 & 1\end{pmatrix} $ and $T=(1,1,e^{2\pi i/3},e^{3\pi i/8})\ot (1,-1).$
	If we label the simple objects of dimension $\sqrt{6}$ by $X_3$ and $fX_3$ then the fusion rules are determined by $X_3^{\ot 2}\cong\one\oplus X_1\oplus X_2\oplus fX_2$, $X_1^{\ot 2}\cong\one$, $X_2^{\ot 2}\cong\one\oplus fX_1\oplus X_2$ and $X_2\ot X_3\cong X_3\oplus fX_3$.
	
Finally we observe that if $(\CC_1,f_1)$ and $(\CC_2,f_2)$ are spin modular categories, then $(f_1,f_2)\in\CC_1\boxtimes\CC_2$ is a boson and hence can be condensed to obtain a new spin modular category $([\CC_1\boxtimes\CC_2]_{\Z_2})_0$, where we de-equivariantize by $\Rep(\Z_2)\cong \langle (f_1,f_2)\rangle$ and then take the trivial component of the corresponding $\Z_2$-grading.  For example applying this to $\PSU(2)_6$ we obtain the prime rank $8$ example $(\PSU(2)_6\boxtimes\PSU(2)_6)_{\Z_2}$ with data:

$S:=\begin{pmatrix}1 &  3+2\sqrt{2}&  1+\sqrt{2}& 1+\sqrt{2}\\
	3+2\sqrt{2}& 1& - 1-\sqrt{2}  &- 1-\sqrt{2}\\
	1+\sqrt{2}&- 1-\sqrt{2}&-1& 3+2\sqrt{2}\\
	1+\sqrt{2}&- 1-\sqrt{2}& 3+2\sqrt{2}& -1\end{pmatrix}\otimes \begin{pmatrix} 1 & 1\\ 1 & 1\end{pmatrix}$ and $T=(1,-1,i,i)\ot (1,-1).$
	The fusion rules may be readily determined from those of $\PSU(2)_6$ by condensing the boson $b:=(f_1,f_1)$.  Notice that $b\ot X\not\cong X$ for any simple $X$ so that there is no ambiguity in labeling the objects in the de-equivariantization.  Setting $f:=[(f_1,\one)]=[(\one,f_1)]$ we have
	
	\begin{align*}
	   & X_1^{\ot 2}\cong \one\oplus 2(X_1\oplus fX_1)\oplus X_2\oplus fX_2\oplus X_3\oplus fX_3, \quad X_1\ot X_2\cong X_3\oplus X_1\oplus fX_1\\ &X_1\ot X_3\cong X_2\oplus X_3\oplus fX_3,\quad X_2\ot X_3\cong X_1,\; \mathrm{ and }\; X_2^{\ot 2}\cong \one\oplus X_2\oplus fX_2
	\end{align*} from which all fusion rules can be recovered.
\bibliographystyle{plain}
\bibliography{Reference.bib}

\end{document}